\author{Kostyantyn Mazur}
\title{Convex Hull of $\left(t, t^2, \cdots, t^N\right)$}
\begin{document}
\newtheorem{lem}{Lemma}
\newtheorem*{thmb}{Theorem}
\newtheorem*{thme}{Theorem}
\newtheorem*{cor}{Corollary}
\newtheorem{defn}{Definition}
\theoremstyle{remark}
\newtheorem*{rmk}{Remark}

\maketitle

\begin{center}
\textbf{Abstract}
\end{center}
This paper analyzes the convex hull of the parametric curve $\left(t, t^2, \cdots, t^N\right)$, where $t$ is in a closed interval. It finds that every point in the convex hull is representable as a convex combination of $\frac{N+1}{2}$ points on the curve. It also finds that the evaluation of the convex combination is a homeomorphism from the convex combinations of $\frac{N+1}{2}$ points on the curve to the convex hull of the curve, as long as the points are listed in increasing order, and as long as two representations that are reachable from each other by removing terms with coefficient zero, combining terms with the same point, the inverses of these operations, or a sequence of these operations in any order, are considered to be equivalent.

\section{Introduction}
This paper is about finding a way to represent the points in the convex hull of the parametrically-defined curve $\left(t, t^2, t^3, \cdots, t^N\right)$ in $N$-dimensional real space, where $t$ is in some closed interval of real numbers. One way to do this is to use Carathéodory's Theorem to show that each point in the convex hull of this curve is a convex combination of at most $N+1$ points on the curve. That theorem, however, leaves many possible ways to name (as such a convex combination) most points in the convex hull, because the possible convex combinations of $N+1$ points on a $1$-dimensional object form a $\left(2N+1\right)$-dimensional set, so points will have an $N$-dimensional set of namings. 

This means that such namings have lots of redundant information, and make functions on this convex hull have lots of critical points when this convex hull is expressed in terms of these namings. This would be averted if there could be a way to use fewer than $N+1$ points on the curve to name each point in the convex hull. Ideally, each naming would use only $\frac{N+1}{2}$ points, as there, the namings form a $N$-dimensional set, which is the same dimension as that of the original object. That avoids having redudant information, but cannot be done blindly, because it is possible that some point in the convex hull does not have a naming. In other words, there might be a point in the convex hull that cannot be written as a convex combination of as few as $\frac{N+1}{2}$ points on the curve. The theorem that is the subject of this paper shows that that is not so; that, in fact, for any point in the convex hull, $\frac{N+1}{2}$ points on the curve always suffice (although both the points on the curve, and their coefficients in the convex combination, are allowed to vary when the point in the convex hull changes).

If $N$ is odd, then $\frac{N+1}{2}$ is an integer, so the optimal result of an $N$-parameter naming is achieved. If $N$ is even, then, still $\frac{N+1}{2}$ points on the curve are used, the half of a point being a point whose coefficient is allowed to vary, but which is fixed at the left endpoint on the interval. This point only contributes $1$ to the dimension of the set of namings, as opposed to the $2$ contributed by a point, for which both the position and coefficient are allowed to vary. This turns out to also be enough to name every point in the convex hull. In that sense, whether $N$ is even or odd, every point in the convex hull is a convex combination of $\frac{N+1}{2}$ points on the curve. This is the theorem proven in this paper. This paper also shows that if the convex combinations are written in order of increasing $t$, and if all ''equivalent'' convex combinations (in the sense that one can be reached from another by adding or removing terms with coefficient zero, and by combining or splitting terms with the same $t$) are considered to be the same one, then the function that actually evaluates the convex combination is a homeomorphism.

Other approaches to finding the convex hull of $\left(t, t^2, \cdots, t^N\right)$ are not ruled out, particularly those using the new field of convex algebraic geometry, which is introduced in \cite{Schweighofer-Sturmfels-Thomas}. For instance, Sinn's \cite{Sinn2015} would help in establishing the boundaries of this convex hull, if something known as the convex dual (defined in that paper) of this curve were known, and if this convex hull were to be shown to be a semi-algebraic set (that is, the solution set of a finite system of inequalities, each of which is of the form ``fixed polynomial in the $N$ variables is greater than $0$'' or ``fixed polynomial in the $N$ variables is not less than $0$'', or a finite union of such solution sets). Here, this (the convex dual) would be the set of vectors $v$ in $\Re^N$, such that $v \cdot \left(x, x^2, \cdots, x^N\right) \geq -1$, where $\cdot$ is the usual dot product. This (when the dot product is expanded) entails finding the whole set of polynomials of degree $N$ or less that have constant coefficient $1$ and that are nonnegative on the entirety of a closed interval. More would need to be done, such as finding the boundary of this set. It might be possible to find the convex hull this way.

Another approach is to follow Section 5.2 of Vinzant's \cite{Vinzant}, or to follow Scheiderer's \cite{Scheiderer-Semidefinite}, to get a semidefinite programming representation of the faces of $\left(t, t^2, \cdots, t^N\right)$ with $t_{min} \leq t \leq t_{max}$ (or any other truncated polynomial curve, or even a one-dimensional semi-algebraic subset of $\Re^N$ in the case of \cite{Scheiderer-Semidefinite}). Either result would give the convex hulls one at a time, which does not in itself give any general properties that hold for all $N$. In the context of \cite{Vinzant}, this paper provides a step towards an answer to its question about the general structure of the convex hull of a polynomial curve, all of whose components are monomials.

Scheiderer's result in \cite{SCHEIDERER20112606} shows the existence of lifted linear-matrix-inequality representations (a term defined in that paper) of convex hulls of a certain class of curves. Henrion's \cite{Henrion2011} shows how to rewrite the convex hull of a specific rational curve as a projection of a set of vectors, such that when the linear combination (using the components of a vector as coefficients) of specified matrices is taken, the result is a positive semidefinite matrix for and only for vectors in the set. Ranestad and Sturmfels' \cite{Ranestad-Sturmfels} studies the boundaries of a three-dimensional curve. Ranestad and Sturmfels' \cite{Ranestad2011} expresses the convex hull of an algebraic variety in terms of the secant planes of varying dimensions (with a $k$-dimensional secant plane intersecting the variety in at least $k+1$ points). Gouveia, Parrilo, and Thomas' \cite{doi:10.1137/090746525} approximates the convex hull of an algebraic variety by constructing a specific sequence of potentially larger objects than the convex hull, called theta bodies (a term defined in that paper). Sedykh's \cite{Sedykh1977} classifies the local properties of the boundaries of the convex hulls of almost all smooth curves that are functions from a circle to $\Re^3$. Unfortunately, none of these results apply here, at least not directly, because, while $\left(t, t^2, \cdots, t^N\right)$ is an algebraic curve (and an algebraic variety), its portion when $t$ is restricted to a closed interval is not (or, at least, $\left(t, t^2, \cdots, t^N\right)$ with $t_{min} \leq t \leq t_{max}$ is not a representation of the curve as an algebraic curve). In the case of \cite{Sedykh1977}, again the presence of boundaries makes this curve (with $N = 3$) not a smooth function from a circle to $\Re^3$.

One reason to study the convex hull of a truncated polynomial curve is that, often, a polynomial is an approximation to some other function, which is only valid in some closed interval (of the parameter). Outside this interval, the actual function and the polynomial are far apart from each other. The convex hull of the full curve possibly includes points that can only be obtained as convex combinations that include one of those faraway points, but given that the faraway points have nothing to do with the actual function, it is meaningless to have the convex hull to include them. 

The problem might also truncate the curve because certain values of a parameter might not make sense in the context of a problem. For instance, in \cite{Part1}, the problem motivating this, one cannot use more resources than one has available, which creates a hard cutoff.

\begin{thmb}
Every point in the convex hull of the curve that is the image of the parametric function $C_N: \left[t_{min}, t_{max}\right] \rightarrow \Re^N$, such that
$C_N\left(t\right) = 
\left(
\begin{matrix}
t \\
t^2 \\
t^3 \\
\cdots \\
t^N
\end{matrix}
\right)
$, can be represented as a convex combination of at most $\frac{N+1}{2}$ points on this curve if $N$ is odd, or as a convex combination of $\frac{N+2}{2}$ points on this curve if $N$ is even. Furthermore, if $N$ is even, one of these at most $\frac{N+2}{2}$ points on the curve can be required to be the point $C_N\left(t_{min}\right)$.
\end{thmb}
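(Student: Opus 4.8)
The plan is to translate the statement into a quadrature problem and then produce the representation as a Gauss-type quadrature rule built from orthogonal polynomials. Write $a=t_{min}$ and $b=t_{max}$. Because the curve is compact, a point $\mathbf p$ lies in its convex hull exactly when $\mathbf p=\sum_i\lambda_i\,C_N(t_i)$ for finitely many $t_i\in[a,b]$ with $\lambda_i>0$ and $\sum_i\lambda_i=1$; Carathéodory bounds the number of terms, but only their existence matters here. Such a combination defines a linear functional $L$ on polynomials of degree at most $N$ by $L(p)=\sum_i\lambda_i\,p(t_i)$; it is \emph{positive}, meaning $L(p)\ge0$ whenever $p\ge0$ on $[a,b]$, and $L(1)=1$. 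Since $C_N(t)=(t,t^2,\dots,t^N)$, reproducing $\mathbf p$ together with the normalization is exactly the requirement that $L(t^j)=m_j$ for $j=0,1,\dots,N$, where $m_0=1$ and $(m_1,\dots,m_N)=\mathbf p$. Hence it suffices to find nodes $s_1<\dots<s_k$ in $[a,b]$ and weights $w_1,\dots,w_k>0$ with $\sum_j w_j\,p(s_j)=L(p)$ for all $\deg p\le N$ and $k$ as small as claimed, since any such rule is a convex combination of the $C_N(s_j)$ that equals $\mathbf p$.

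First I would treat $N=2n-1$ odd, where the target is Gaussian quadrature with $n=\frac{N+1}{2}$ nodes. The $n\times n$ Hankel matrix $[m_{i+j}]_{0\le i,j\le n-1}$ is positive definite exactly when the distinct points of the starting combination number at least $n$; assuming this, I would solve the linear system $L(\pi_n t^k)=0$ for $k=0,\dots,n-1$ — each condition an expression of degree at most $2n-1=N$ — for the coefficients of a monic polynomial $\pi_n$ of degree $n$. I would take $s_1,\dots,s_n$ to be the roots of $\pi_n$ and fix the weights by demanding exactness on degree $\le n-1$; writing an arbitrary $p$ with $\deg p\le N$ as $p=\pi_n q+r$ and invoking $L(\pi_n q)=0$ then upgrades exactness to degree $N$. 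For $N=2n$ even I would instead build a Gauss--Radau rule with a node pinned at $a$: the functional $\tilde L(p)=L\big((t-a)p\big)$, defined for $\deg p\le 2n-1$ and positive because $t-a\ge0$ on $[a,b]$, can be fed into the odd-case construction to produce $n$ interior nodes, to which I adjoin $s_0=a=t_{min}$. The resulting rule has $n+1=\frac{N+2}{2}$ nodes, is exact to degree $2n=N$, and contains the required point $C_N(t_{min})$.

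The main obstacle, and where I expect the real work to lie, is showing that the nodes land inside the interval and the weights are strictly positive, since this is precisely what makes the rule a genuine convex combination. For the location of the nodes I would argue by contradiction: if $\pi_n$ changed sign at only $r<n$ points $s_1<\dots<s_r$ of the open interval, then $\pi_n(t)\prod_{i=1}^r(t-s_i)$ would be of one sign on $[a,b]$ and of degree $n+r\le 2n-1=N$, so positivity of $L$ would force $L\big(\pi_n\prod_i(t-s_i)\big)\neq0$, contradicting orthogonality against the degree-$r$ factor. Thus $\pi_n$ has $n$ simple roots in $(a,b)$. For the weights I would evaluate $L$ on $\ell_j^2$, where $\ell_j$ is the degree-$(n-1)$ Lagrange polynomial equal to $1$ at $s_j$ and $0$ at the other nodes: since $\ell_j^2\ge0$ has degree $2n-2\le N$, exactness gives $w_j=L(\ell_j^2)>0$, the strictness coming from the fact that $\ell_j$ cannot vanish at every $t_i$. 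The even case is handled identically after carrying the factor $t-a$.

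Finally I would dispose of the degenerate cases hidden in the phrase ``at most'': if the Hankel matrix above is singular — equivalently, if the distinct points $t_i$ of the starting combination already number fewer than $n$ — then $\pi_n$ need not exist, but in that situation $\mathbf p$ is already a convex combination of fewer than the claimed number of points and there is nothing to prove; the same reduction applied to $\tilde L$ covers the boundary behaviour in the even case. Once these nondegeneracy checks and the positivity arguments of the previous paragraph are in place, the remaining steps are the routine exactness bookkeeping for quadrature, so I regard the positivity-and-location lemma together with the even-case Radau modification as the technical heart of the proof.
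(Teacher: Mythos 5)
Your proposal is correct in outline but follows a genuinely different route from the paper. You recast the problem as a truncated moment problem and build the representation as a Gaussian quadrature rule (Gauss--Radau with a node pinned at $t_{min}$ when $N$ is even): the orthogonal polynomial $\pi_n$ supplies the nodes, division with remainder upgrades exactness from degree $n-1$ to degree $N$, and positivity of $L$ on squares gives positive weights. The paper instead works topologically on the set of ``namings'': it shows (via the implicit function theorem applied to a pseudo-Vandermonde Jacobian) that around any non-degenerate representation by $\frac{N+3}{2}$ points there is a positive-dimensional family of representations of the same point, uses Lagrange multipliers to show $t_2$ has no interior local extrema on that family, and then invokes compactness of the naming space to force the extremum of $t_2$ onto a reducible naming, which drops the point count by one; iterating gives the bound. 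Your approach buys a direct, non-iterative construction with explicit formulas for nodes and weights, and connects the theorem to the classical theory of principal representations of moment sequences; the paper's approach avoids orthogonal polynomials entirely and sets up machinery (neighborhoods of namings, the classification of boundary namings) that it reuses for the uniqueness and homeomorphism results in its later sections, which your construction would not provide for free.

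Two details in your sketch need care, though both are standard. First, the nodes need not lie in the open interval: if the starting combination is supported on exactly $n$ points (so that $\pi_n\prod_i(t-s_i)$ can vanish on the whole support and your strict-positivity contradiction fails), then $\pi_n$ is forced to be $\prod_i(t-t_i)$ and its roots may sit at $t_{min}$ or $t_{max}$; the conclusion you actually need, $n$ distinct roots in the closed interval, still holds, but the case split should be made explicit. Second, in the even case the weight at the pinned node $a$ is not of the form $\tilde L(\ell_j^2)$ and needs its own argument, e.g.\ evaluating the rule on $\prod_j (t-s_j)^2/\prod_j (a-s_j)^2$, which has degree $N$ and shows that weight is nonnegative (it may be zero, which the theorem's ``can be required to be $C_N(t_{min})$'' clause tolerates).
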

Section \ref{existencesection} provides a proof of the theorem. Section \ref{uniquenesssection} provides a proof of the claim that the representation of the form guaranteed to exist by the theorem is in fact unique, as long as the points are listed in ascending order of $t$, all terms with coefficient zero are removed, and all terms with the same point are merged. Section \ref{homeomorphismsection} provides a proof of the claim that, once this merging is done, then the evaluation of the convex combination indicated is a homeomorphism. The proofs in this paper are informal.

A motivation for this theorem is the following corollary, which is used in \cite{Part1}.
\begin{cor}
The same is true if $C_N\left(t\right)$ were 
$
\left(
\begin{matrix}
\sum \limits _{s = 0} ^{N} {a_{1s}t^s} \\
\sum \limits _{s = 0} ^{N} {a_{2s}t^s} \\
\sum \limits _{s = 0} ^{N} {a_{3s}t^s} \\
\cdots \\
\sum \limits _{s = 0} ^{N} {a_{Ns}t^s} \\
\end{matrix}
\right)
$, with all the $a_{is}$ real constants. That is, if all the entries of $C_N$ are polynomials in $t$ with degree at most $N$, then every point in the convex hull of $C_N$ still can be represented as a convex combination of $\frac{N+1}{2}$ points on this curve if $N$ is odd, or as a convex combination of $\frac{N+2}{2}$ points on this curve if $N$ is even. Furthermore, if $N$ is even, one of these $\frac{N+2}{2}$ can still be required to be the point $C_N\left(t_{min}\right)$.
\end{cor}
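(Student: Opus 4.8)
The plan is to realize the general polynomial curve of the corollary as an \emph{affine image} of the moment curve $C_N$ of the Theorem, and then transport the representation guaranteed by the Theorem through that affine map. To keep notation clear, I would write the curve appearing in the corollary as $D_N : [t_{min}, t_{max}] \to \Re^N$, reserving $C_N$ for the moment curve $\left(t, t^2, \ldots, t^N\right)$. Since each component of $D_N$ is a polynomial of degree at most $N$, I would collect the constant terms into a vector $b = \left(a_{10}, a_{20}, \ldots, a_{N0}\right)^T$ and the remaining coefficients into the $N \times N$ matrix $M = (a_{is})_{1 \le i \le N,\, 1 \le s \le N}$, so that
\begin{equation}
D_N(t) = b + M\, C_N(t) = \Phi\!\left(C_N(t)\right), \qquad \Phi(x) := b + Mx .
\end{equation}
Thus $D_N = \Phi \circ C_N$ for the affine map $\Phi : \Re^N \to \Re^N$.

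Next I would invoke two standard facts about affine maps. First, an affine map commutes with the convex hull: $\operatorname{conv}\left(\Phi(S)\right) = \Phi\left(\operatorname{conv}(S)\right)$ for any $S \subseteq \Re^N$. Second, an affine map preserves convex combinations, i.e. $\Phi\!\left(\sum_i \lambda_i x_i\right) = \sum_i \lambda_i\, \Phi(x_i)$ whenever $\sum_i \lambda_i = 1$; this follows directly from $\Phi(x) = b + Mx$, the $b$-terms recombining into a single $b$ precisely because the $\lambda_i$ sum to $1$. Both facts require no regularity hypothesis on $\Phi$, and since $C_N\!\left([t_{min}, t_{max}]\right)$ is compact there are no convergence subtleties.

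With these in hand the argument is short. Given any point $p$ in the convex hull of $D_N\!\left([t_{min}, t_{max}]\right) = \Phi\!\left(C_N\!\left([t_{min}, t_{max}]\right)\right)$, the first fact gives $p = \Phi(q)$ for some $q$ in the convex hull of $C_N\!\left([t_{min}, t_{max}]\right)$. Applying the Theorem to $q$, we get $q = \sum_i \lambda_i\, C_N(t_i)$ with at most $\frac{N+1}{2}$ points if $N$ is odd, or $\frac{N+2}{2}$ points if $N$ is even, and with $t_1 = t_{min}$ allowed in the even case. Pushing forward by $\Phi$ and using the second fact,
\begin{equation}
p = \Phi(q) = \sum_i \lambda_i\, \Phi\!\left(C_N(t_i)\right) = \sum_i \lambda_i\, D_N(t_i),
\end{equation}
which exhibits $p$ as a convex combination of the same number of points, now lying on $D_N$. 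If some of the $\Phi\!\left(C_N(t_i)\right)$ coincide the count only drops, consistent with the ``at most'' bound; and in the even case $D_N(t_{min}) = \Phi\!\left(C_N(t_{min})\right)$ is among the points, so the endpoint requirement transfers verbatim.

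I do not expect a serious obstacle here: the mathematical content lives entirely in the Theorem, and the corollary is a formal consequence of the fact that affine maps respect convex structure. The only points needing care are bookkeeping ones — that $\Phi$ is affine rather than linear, so the constant column must be separated into $b$ and the ``coefficients sum to $1$'' condition explicitly invoked, and that non-injectivity of $\Phi$ can only reduce the number of distinct points, which is harmless for an ``at most'' statement. I would also note that the stronger uniqueness and homeomorphism conclusions of the later sections do \emph{not} transport this way unless $\Phi$ is injective, since a non-injective affine map can merge genuinely distinct representations; but because the corollary asserts only the representation count, this does not arise.
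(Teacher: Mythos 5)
Your proposal is correct and takes essentially the same approach as the paper: realize the polynomial curve as the image of the moment curve under a coordinate-change map, apply the Theorem to pull the point back, reduce, and push the convex combination forward. The only difference is one of care — the paper calls the map a ``linear transformation'' even though the constant terms $a_{i0}$ make it affine, a point you handle correctly by separating out $b$ and invoking the fact that the coefficients sum to $1$.
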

\begin{proof}
This curve is the result of a linear transformation on the original $C_N$. Any point in the convex hull of the new $C_N$ is thus a convex combination of points, each of which is the result of the same linear transformation applied to points on the original $C_N$. The linear transformation can be factored out, and the other factor becomes a convex combination of points on the original $C_N$, which can be reduced to a convex combination of $\frac{N+1}{2}$ points on the original $C_N$ if $N$ is odd, or as a convex combination of $\frac{N+2}{2}$ points on the original $C_N$ with one of points being at $t_{min}$ if $N$ is even. After the reduction, the linear transformation can be distributed again, leaving a representation as a convex combination of $\frac{N+1}{2}$ points on the new $C_N$ if $N$ is odd, or as a convex combination of $\frac{N+2}{2}$ points on the new $C_N$ with one of points being at $t_{min}$ if $N$ is even. This still evaluates to the same point as the convex combination at the beginning, providing a representation in the form required by this lemma.
\end{proof}

\section{Existence}
\label{existencesection}
\begin{lem} [Lemma on Pseudo-Vandermonde Matrices]
\label{vandermondelem}
Let $N$ be a nonnegative integer, and let $q$ be an integer with $\frac{N+1}{2} \leq q \leq N+1$. Then, define the pseudo-Vandermonde matrix $V_{N+1}\left(u_1, u_2, ..., u_q\right)$ to be 
\[
\left(
\begin{matrix}
1 & 1 & \cdots & 1 & 0 & 0 & \cdots & 0  \\
u_1 & u_2 & \cdots & u_q & 1 & 1 & \cdots & 1 \\
u_1^2 & u_2^2 & \cdots & u_q^2 & 2u_1 & 2u_2 & \cdots & 2u_{N+1-q} \\
\cdots & \cdots & \cdots & \cdots & \cdots & \cdots & \cdots & \cdots \\
u_1^N & u_2^N & \cdots & u_q^N & Nu_1^{N-1} & Nu_2^{N-1} & \cdots & Nu_{N+1-q}^{N-1} \\
\end{matrix}
\right)
\]
In other words, the first $q$ columns of $V_{N+1}\left(u_1, u_2, ..., u_q\right)$ are the first $q$ columns of a Vandermonde matrix of size $N+1$, while the last $N+1-q$ columns are the derivatives of the first $N+1-q$ columns. Then, the pseudo-Vandermonde matrix $V_{N+1}\left(u_1, u_2, ..., u_q\right)$ is nonsingular, as long as all the $u_j$ are pairwise distinct.
\end{lem}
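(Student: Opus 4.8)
The plan is to interpret this matrix through the lens of Hermite (osculatory) polynomial interpolation, which converts the linear-algebra question of nonsingularity into a root-counting question about polynomials of degree at most $N$.

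First I would check that $V_{N+1}$ really is a square $(N+1)\times(N+1)$ matrix: the $q$ value-columns together with the $N+1-q$ derivative-columns give $q + (N+1-q) = N+1$ columns, matching the $N+1$ rows indexed by the powers $0,1,\dots,N$. The hypothesis $q \ge \frac{N+1}{2}$ is exactly what guarantees $N+1-q \le q$, so that the derivative columns only ever reference nodes among $u_1,\dots,u_q$ and the matrix is well-defined. Consequently each of $u_1,\dots,u_{N+1-q}$ occurs twice (once as a value column, once as a derivative column), while each of $u_{N+1-q+1},\dots,u_q$ occurs once.

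Rather than computing the determinant directly, I would show the transpose $V_{N+1}^{T}$ has trivial kernel. Reading off the entries, for a coefficient vector $c = (c_0,\dots,c_N)^{T}$ encoding the polynomial $p(t)=\sum_{i=0}^N c_i t^i$, the product $V_{N+1}^{T} c$ is precisely the list of Hermite data $\big(p(u_1),\dots,p(u_q),\,p'(u_1),\dots,p'(u_{N+1-q})\big)$. Hence $V_{N+1}$ is nonsingular if and only if the only polynomial $p$ of degree at most $N$ satisfying $p(u_j)=0$ for $1\le j\le q$ and $p'(u_k)=0$ for $1\le k\le N+1-q$ is $p\equiv 0$. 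The heart of the argument is then a root count: suppose such a $p$ were nonzero. Because the $u_j$ are pairwise distinct, each node $u_1,\dots,u_{N+1-q}$ is a common root of $p$ and $p'$ and hence has multiplicity at least $2$, while each node $u_{N+1-q+1},\dots,u_q$ has multiplicity at least $1$. Summing over these distinct roots yields at least $2(N+1-q) + \big(q-(N+1-q)\big) = N+1$ roots counted with multiplicity, exceeding $\deg p \le N$. This forces $p\equiv 0$, a contradiction, so the kernel is trivial and the matrix is nonsingular.

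The step I expect to demand the most care is the multiplicity bookkeeping: verifying that a common root of $p$ and $p'$ genuinely contributes a factor $(t-u_k)^2$, and that the distinctness of the nodes lets these local multiplicities be added without double-counting. A couple of edge cases also deserve a sentence: when $q=N+1$ there are no derivative columns and the claim reduces to the classical Vandermonde determinant, and the degenerate case $N=0$ is immediate.
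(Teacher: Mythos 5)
Your proof is correct, but it takes a genuinely different route from the paper. The paper proves nonsingularity by a direct induction on the determinant: it subtracts $u_1$ times each row from the row below to clear the first column, expands along that column, rescales and recombines columns, and shows the result is the determinant of a pseudo-Vandermonde matrix of size $N$ with the same structure, terminating at a $1\times 1$ matrix with entry $1$. You instead pass to the transpose and recognize $V_{N+1}^{T}c$ as the Hermite data $\bigl(p(u_1),\dots,p(u_q),p'(u_1),\dots,p'(u_{N+1-q})\bigr)$ of the polynomial $p(t)=\sum_i c_i t^i$, so that a nontrivial kernel would force a nonzero polynomial of degree at most $N$ to have at least $2(N+1-q)+(2q-N-1)=N+1$ roots counted with multiplicity. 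Your multiplicity bookkeeping is sound: the hypothesis $q\ge\frac{N+1}{2}$ is exactly what makes each derivative node also a value node, a common root of $p$ and $p'$ does contribute a factor $(t-u_k)^2$, and the pairwise distinctness of the nodes lets the coprime factors $(t-u_j)^{m_j}$ be multiplied together to bound the degree. What each approach buys: the paper's induction is elementary and self-contained (it never invokes facts about root multiplicities, only column operations), and it exhibits the recursive structure of these matrices explicitly; your argument is shorter, more conceptual, and generalizes immediately to fully confluent Vandermonde matrices with higher-order derivative columns, at the cost of relying on the factor theorem and the degree--root-count bound.
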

\begin{proof}
Perform the following operations on the determinant of $V_{N+1}\left(u_1, u_2, \cdots, u_q\right)$:

1) Zero out each entry of the first column other than the first, by subtracting $u_1$ times each row from the row immediately below, in the bottom-to-top order.

2) Expand the determinant along the first column, leaving
\[
\hspace{-1 in}\det \left(
\begin{matrix}
u_2 - u_1 & \cdots & u_q - u_1 & 1 & \cdots & 1 \\
u_2^2 - u_1 u_2 & \cdots & u_q^2 - u_1 u_q & 2u_1 - u_1 & \cdots & 2u_{N+1-q} - u_1 \\
u_2^3 - u_1 u_2^2 & \cdots & u_q^3 - u_1 u_q^2 & 3u_1^2 - 2 u_1 u_1 & \cdots & 3u_{N+1-q} - 2 u_1 u_{N+1-q} \\
\cdots & \cdots & \cdots & \cdots & \cdots & \cdots \\
u_2^N - u_1 u_2^{N-1} & \cdots & u_q^N - u_1 u_q^{N-1} & Nu_1^{N-1} - \left(N-1\right) u_1 u_1^{N-2} & \cdots & Nu_{N+1-q}^{N-1} - \left(N-1\right) u_1 u_{N+1-q}^{N-2} \\
\end{matrix}
\right)
\]

3) Divide the first $q-1$ coumns by their top entry, which does not change whether the determinant is zero (as the $u_j$ are pairwise distinct), leaving
\[
\hspace{-0.8 in}\det \left(
\begin{matrix}
1 & \cdots & 1 & 1 & \cdots & 1 \\
u_2 & \cdots & u_q & 2u_1 - u_1 & \cdots & 2u_{N+1-q} - u_1 \\
u_2^2 & \cdots & u_q^2 & 3u_1^2 - 2 u_1 u_1 & \cdots & 3u_{N+1-q} - 2 u_1 u_{N+1-q} \\
\cdots & \cdots & \cdots & \cdots & \cdots & \cdots \\
u_2^{N-1} & \cdots & u_q^{N-1} & Nu_1^{N-1} - \left(N-1\right) u_1 u_1^{N-2} & \cdots & Nu_{N+1-q}^{N-1} - \left(N-1\right) u_1 u_{N+1-q}^{N-2} \\
\end{matrix}
\right)
\]
or, equivalently,
\[
\hspace{-0.8 in}\det \left(
\begin{matrix}
1 & \cdots & 1 & 1 & 1 & \cdots & 1 \\
u_2 & \cdots & u_q & u_1 & 2u_2 - u_1 & \cdots & 2u_{N+1-q} - u_1 \\
u_2^2 & \cdots & u_q^2 & u_1^2 & 3u_2^2 - 2 u_1 u_2 & \cdots & 3u_{N+1-q}^2 - 2 u_1 u_{N+1-q} \\
\cdots & \cdots & \cdots & \cdots & \cdots & \cdots & \cdots \\
u_2^{N-1} & \cdots & u_q^{N-1} & u_1^{N-1} & Nu_2^{N-1} - \left(N-1\right) u_1 u_2^{N-2} & \cdots & Nu_{N+1-q}^{N-1} - \left(N-1\right) u_1 u_{N+1-q}^{N-2} \\
\end{matrix}
\right)
\]

4) Subtract the $u_2$ through $u_q$ columns from the corresponding $2u_2-u_1$ through $2u_q - u_1$ columnns (if they exist), leaving
\[
\hspace{-1.1 in}\det \left(
\begin{matrix}
1 & \cdots & 1 & 1 & 0 & \cdots & 0 \\
u_2 & \cdots & u_q & u_1 & u_2 - u_1 & \cdots & u_{N+1-q} - u_1 \\
u_2^2 & \cdots & u_q^2 & u_1^2 & 2u_2^2 - 2 u_1 u_2 & \cdots & 2u_{N+1-q}^2 - 2 u_1 u_{N+1-q} \\
\cdots & \cdots & \cdots & \cdots & \cdots & \cdots & \cdots \\
u_2^{N-1} & \cdots & u_q^{N-1} & u_1^{N-1} & \left(N-1\right)u_2^{N-1} - \left(N-1\right) u_1 u_2^{N-2} & \cdots & \left(N-1\right)u_{N+1-q}^{N-1} - \left(N-1\right) u_1 u_{N+1-q}^{N-2} \\
\end{matrix}
\right)
\]

5) Divide the columns beginning with a $0$ by their second entries (which are nonzero), leaving

\[
\det \left(
\begin{matrix}
1 & \cdots & 1 & 1 & 0 & \cdots & 0 \\
u_2 & \cdots & u_q & u_1 & 1 & \cdots & 1 \\
u_2^2 & \cdots & u_q^2 & u_1^2 & 2u_2 & \cdots & 2u_{N+1-q} \\
\cdots & \cdots & \cdots & \cdots & \cdots & \cdots & \cdots \\
u_2^{N-1} & \cdots & u_q^{N-1} & u_1^{N-1} & \left(N-1\right)u_2^{N-2} & \cdots & \left(N-1\right)u_{N+1-q}^{N-2} \\
\end{matrix}
\right)
\]

but this is a smaller pseudo-Vandermonde matrix, still with all of the first $q$ columns distinct.

The $u_1$ column might not exist, but then, $q = N+1$, so steps 4 and 5 did nothing, and the result is
\[
\det \left(
\begin{matrix}
1 & \cdots & 1 \\
u_2 & \cdots & u_q \\
u_2^2 & \cdots & u_q^2 \\
\cdots & \cdots & \cdots \\
u_2^{N-1} & \cdots & u_q^{N-1} \\
\end{matrix}
\right)
\]
In either case, the result is the determinant of a smaller pseudo-Vandermonde matrix, which is zero if and only if the determinant of the original pseudo-Vanderminde matrix was zero. Eventually, this gets down to a $1$-by-$1$ matrix, where $N = 0$ and $q = 1$ (no other integer $q$ satisfies $\frac{0+1}{2} \leq q \leq 0 + 1$), and the single entry of this matrix is $1$. That matrix has determinant $1$, so the original pseudo-Vandermonde matrix has a nonzero determinant, and is thus nonsingular.

\end{proof}
\begin{defn}
For any positive integer $N$, and for any two real numbers $t_{min}$ and $t_{max}$, define \emph{the curve} $C_N$ to be image of the parametric function $C_N: \left[t_{min}, t_{max}\right] \rightarrow \Re^N$, such that
$C_N\left(t\right) = 
\left(
\begin{matrix}
t \\
t^2 \\
t^3 \\
\cdots \\
t^N
\end{matrix}
\right)
$. 
\end{defn}

\begin{defn}
Define the convex hull of $C_N$ to be the set of all finite convex combinations of points on $C_N$. A naming of a point
$
\left(
\begin{matrix}
v_1 \\
v_2 \\
v_3 \\
\cdots \\
v_N
\end{matrix}
\right)
$
is a representation of that point as a convex combination of points on $C_N$ of the form
$
\sum \limits _{j = 1} ^{M} {c_j
\left(
\begin{matrix}
t_j \\
t_j^2 \\
t_j^3 \\
\cdots \\
t_j^N
\end{matrix}
\right)
}
$
(which, when evaluated, results in the point
$
\left(
\begin{matrix}
v_1 \\
v_2 \\
v_3 \\
\cdots \\
v_N
\end{matrix}
\right)
$
). The terms are to be ordered in increasing order of $t_j$. The same naming can also be represented as a tuple, $\left(c_1, \cdots, c_N, t_1, \cdots, t_N\right)$.
\end{defn}
\begin{defn}
A \emph{naming} is a naming of some point in the convex hull of $C_N$.
\end{defn}
\begin{defn}
For any positive integer $M$, an $M$-naming is a naming that uses $M$ points.
\end{defn}
\begin{defn}
For any positive integer $M$, define an \emph{$\left(M - \frac{1}{2}\right)$-naming} as an $M$-naming with $t_1 = t_{min}$. (It is called that way, because there is a $\left(2M-2\right)$-parameter family of $\left(M - \frac{1}{2}\right)$-namings, while there is a $\left(2M-3\right)$-parameter family of $\left(M-1\right)$-namings and there is a $\left(2M-1\right)$-parameter family of $M$-namings.)
\end{defn}
\begin{defn}
For any positive integer $M$, let an $M$-naming be \emph{reducible} if and only if, either two adjacent terms use the same point, or a term has coefficient $0$. Let an $\left(M-\frac{1}{2}\right)$-naming be \emph{reducible} if and only if, either two adjacent terms use the same point, or a term has coefficient $0$.
(Note that $c_1 = 0$ does not suffice to make an $\left(M - \frac{1}{2}\right)$-naming reducible. Also note that, because the terms are in increasing order of $t$, then if two non-adjacent terms use the same point, then all terms between them use that point also, so the $M$-naming is still reducible.)
\end{defn}
\begin{lem} [Dimension Reduction Lemma]
\label{dimreductionlem}
For any positive integer $N$, and for any positive integer or positive half-integer $M'$, any point in the convex hull of $C_N$ that has a reducible $M'$-naming also has an $\left(M'-1\right)$-naming (no claim is made about whether or not this $\left(M'-1\right)$-naming is reducible).
\end{lem}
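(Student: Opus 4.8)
The plan is to prove the lemma constructively, by exhibiting the reduced naming explicitly, and to split into cases according to which defining feature of reducibility is present and according to whether $M'$ is an integer or a half-integer. The two basic operations available are \emph{merging} two adjacent terms that share a point, replacing $c_j C_N(t_j)$ and $c_{j+1} C_N(t_{j+1})$ with $\left(c_j + c_{j+1}\right) C_N(t_j)$ when $C_N(t_j) = C_N(t_{j+1})$, and \emph{deleting} a term whose coefficient is $0$. Each operation lowers the number of terms by exactly one, leaves the value of the convex combination unchanged, keeps every coefficient nonnegative, and preserves $\sum c_j = 1$; since the $t_j$ were listed in increasing order, the surviving terms stay in increasing order. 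So after either operation I obtain a genuine naming of the same point with one fewer term. The only thing needing care is the auxiliary constraint $t_1 = t_{min}$ that distinguishes half-integer namings from integer ones.

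First I would dispose of the integer case $M' = M$. A reducible $M$-naming has, by definition, either two adjacent terms at the same point or some term with coefficient $0$; applying the corresponding operation produces an $(M-1)$-naming, and no constraint on $t_1$ is demanded of the output, so nothing further is required. (The statement is vacuous when $M = 1$, since a $1$-naming has $c_1 = 1$ and a single term, hence cannot be reducible.)

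The half-integer case $M' = M - \frac{1}{2}$ is where the content lies, because the output must be an $(M-1)$-naming that still satisfies $t_1 = t_{min}$, that is, an $\left(M - \frac{3}{2}\right)$-naming. By definition a reducible $\left(M - \frac{1}{2}\right)$-naming has either two adjacent equal-point terms or a term of index $j \geq 2$ with coefficient $0$, the case $c_1 = 0$ being explicitly excluded. If reducibility is witnessed by a zero coefficient at some $j \geq 2$, deleting that term leaves the first term untouched, so $t_1 = t_{min}$ survives. If it is witnessed by adjacent equal-point terms $j, j+1$ with $j \geq 2$, merging them again leaves the first term alone. The one subtle merge is $j = 1$: here $C_N(t_1) = C_N(t_2)$, and since the first coordinate of $C_N$ is $t$ the parametrization is injective, forcing $t_2 = t_1 = t_{min}$; the merged term is therefore still at $t_{min}$ and remains the first term, so $t_1 = t_{min}$ is again preserved. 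In every sub-case the result is an $\left(M - \frac{3}{2}\right)$-naming.

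The main obstacle I anticipate is not any computation but precisely this bookkeeping around $t_1 = t_{min}$, and in particular explaining why the definition had to exclude $c_1 = 0$ from making a half-integer naming reducible: deleting the first term when $c_1 = 0$ would promote $t_2$, which need not equal $t_{min}$, into the first position, yielding an ordinary $(M-1)$-naming rather than the required $\left(M - \frac{3}{2}\right)$-naming. Thus the exclusion is exactly what guarantees that every operation available to a reducible half-integer naming preserves the left-endpoint constraint, and the case analysis above is then exhaustive.
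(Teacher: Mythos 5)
Your proof is correct and follows essentially the same route as the paper's: merge two adjacent equal-point terms or delete a zero-coefficient term, observe that either operation drops the term count by one while preserving the naming conditions, and check that in the half-integer case the $t_1 = t_{min}$ constraint survives because $c_1 = 0$ is excluded from the definition of reducibility. Your treatment of the $j=1$ merge (using injectivity of $t \mapsto C_N(t)$ to force $t_2 = t_{min}$) is a detail the paper glosses over, but the argument is the same.
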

\begin{proof}
Let $M = M'$ if $M'$ is an integer or $M = M' + \frac{1}{2}$ if $M'$ is a half-integer. (Note that an $M'$-naming is an $M$-naming in either case.) If two adjacent terms use the same point, then they can be combined into a single term. If a term (not the first one if $M'$ is a half-integer) has coefficient zero, then it can be removed. The result is an $M$-naming, and if $M'$ is a half-integer, then its first point is still at $t_{min}$, because it was not removed, so the result is an $M'$-naming. (The points are still in increasing order of $t_j$, because any merger of two terms is only a merger of two points with the same $t$, which are adjacent to each other.)
\end{proof}

\begin{defn}
For any positive integer or a positive half-integer $M'$, let an $l$-boundary $M'$-naming be a naming that is on exactly $l$ boundaries of the naming space. Any zero coefficient counts as one boundary. Any two adjacent terms using the same point count as one boundary. If the first term uses the point at $t_{min}$, that counts as one boundary if $M'$ is an integer. (It does not count if $M'$ is a half-integer, because it is a required condition for every $M'$-naming in this case.) If the last term uses the point at $t_{max}$, that counts as one boundary.
\end{defn}

\begin{rmk}
The name \emph{$l$-boundary naming} comes from the fact that each of the statements is the equality case of one of the inequalities that are requirements to be an $M$-naming or $\left(M-\frac{1}{2}\right)$-naming. These are the boundaries of the space of the namings, so an $l$-boundary naming is a naming that is on $l$ boundaries of the naming space.
\end{rmk}

\begin{defn}
For any positive integer or positive half-integer $M$, let an \emph{interior} $M$-naming be a $0$-boundary $M$-naming, and let a \emph{boundary} $M$-naming be an $l$-boundary $M$-naming, with $l>0$.
\end{defn}

\begin{rmk}
From the definition of boundary $M$-namings, every reducible $M$-naming is a boundary $M$-naming, but there are boundary $M$-namings that are not reducible $M$-namings, namely those using the point at $t_{max}$, those using the point at $t_{min}$ if $M'$ is an integer, and those with first coefficient zero if $M'$ is a half-integer.
\end{rmk}

\begin{lem} [Lemma on Non-Reducible Boundary Namings]
\label{nonreducibleboundarylem}
For any positive integer $M$:

1) the non-reducible $1$-boundary $M$-namings satisfy either $t_1 = t_{min}$ or $t_M = t_{max}$, but not both;

2) the non-reducible $2$-boundary $M$-namings satisfy both $t_1 = t_{min}$ and $t_M = t_{max}$; and

3) there are no non-reducible $l$-boundary $\left(M - \frac{1}{2}\right)$-namings with $l \geq 3$;

4) the non-reducible $1$-boundary $\left(M - \frac{1}{2}\right)$-namings satisfy either $t_M = t_{max}$ or $c_1 = 0$, but not both;

5) the non-reducible $2$-boundary $\left(M - \frac{1}{2}\right)$-namings satisfy both $t_M = t_{max}$ and $c_1 = 0$; and

6) there are no non-reducible $l$-boundary $\left(M - \frac{1}{2}\right)$-namings with $l \geq 3$.
\end{lem}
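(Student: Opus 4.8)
The plan is to reduce the entire lemma to a classification of the boundaries of the naming space: list every inequality whose equality case defines a boundary, mark each boundary as reducible or non-reducible, and then observe that in each of the two settings (integer $M$ and half-integer $M - \frac{1}{2}$) only two non-reducible boundaries exist. Since a non-reducible naming lies, by definition, on no reducible boundary whatsoever, every boundary it meets must be one of those two, and all six statements then drop out by counting.

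First I would write down the inequality constraints cutting out the naming space inside the affine hyperplane $\sum_j c_j = 1$. For an $M$-naming these are $c_j \geq 0$ for each $j$, the ordering inequalities $t_j \leq t_{j+1}$, and the two endpoint inequalities $t_1 \geq t_{min}$ and $t_M \leq t_{max}$. For an $(M - \frac{1}{2})$-naming the condition $t_1 = t_{min}$ is an imposed equality and therefore contributes no boundary, leaving $c_j \geq 0$, $t_j \leq t_{j+1}$, and $t_M \leq t_{max}$. Comparing the equality case of each inequality against the definition of reducibility, the boundaries $t_j = t_{j+1}$ and $c_j = 0$ are reducible (the latter for every $j$ in the integer case, but only for $j > 1$ in the half-integer case), whereas $t_1 = t_{min}$ (integer case), $t_M = t_{max}$ (both cases), and $c_1 = 0$ (half-integer case) are non-reducible. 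This matches the remark preceding this lemma, which lists exactly these non-reducible boundaries.

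Next I would record the resulting inventory: the non-reducible boundaries of an integer $M$-naming are precisely $t_1 = t_{min}$ and $t_M = t_{max}$, while those of a half-integer $(M - \frac{1}{2})$-naming are precisely $c_1 = 0$ and $t_M = t_{max}$, giving two in each case. A non-reducible $l$-boundary naming lies on $l$ boundaries, all of them non-reducible, so $l \leq 2$, which establishes the nonexistence of non-reducible $l$-boundary namings with $l \geq 3$ (statements 3 and 6) in both settings. When $l = 2$ the naming must occupy both non-reducible boundaries, yielding statements 2 and 5; when $l = 1$ it occupies exactly one of the two, so exactly one of the two conditions holds, yielding statements 1 and 4.

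Because the underlying argument is pure bookkeeping, the only real work is making the classification exhaustive and correct, and the delicate point is the asymmetric treatment of a vanishing first coefficient: $c_1 = 0$ is reducible for an integer $M$-naming, since the term may simply be deleted, but non-reducible for a half-integer naming, since deleting it would violate the required condition $t_1 = t_{min}$; this is exactly why $c_1 = 0$ replaces $t_1 = t_{min}$ in the half-integer inventory. I would also verify that the two non-reducible boundaries are genuinely distinct constraints, which requires the first and last terms to differ. Under the standing assumption $t_{min} < t_{max}$, the degenerate case $M = 1$ makes the two mutually exclusive (in the integer case $t_1 = t_{min}$ and $t_1 = t_{max}$ cannot both hold, and in the half-integer case $c_1$ must equal $1$ and so cannot vanish), so no naming meets two non-reducible boundaries there and the exclusivity asserted in statements 1 and 4 persists.
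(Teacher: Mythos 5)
Your proof is correct and follows essentially the same route as the paper's: identify the only two boundaries that do not force reducibility in each case ($t_1 = t_{min}$ and $t_M = t_{max}$ for integer namings; $c_1 = 0$ and $t_M = t_{max}$ for half-integer namings) and then read off all six claims by counting. Your additional remarks on the $M=1$ degeneracy and on why $c_1=0$ switches status between the two cases are sound elaborations of points the paper leaves implicit.
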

\begin{proof}
The only boundaries that do not make an $M$-naming reducible are the first term using the point at $t_{min}$, and the last term using the point at $t_{max}$, so the non-reducible $1$-boundary $M$-namings are on exactly one of these boundaries and the non-reducible $2$-boundary $M$-namings are on both of them, and there can be no non-reducible $l$-boundary $M$-namings with $l \geq 3$. This shows 1), 2), and 3).

Similarly, the only boundaries that do not make an $\left(M-\frac{1}{2}\right)$-naming reducible are the first term having coefficient zero, and the last term using the point at $t_{max}$, so the non-reducible $1$-boundary $\left(M-\frac{1}{2}\right)$-namings are on exactly one of these boundaries and the non-reducible $2$-boundary $\left(M-\frac{1}{2}\right)$-namings are on both of them, and there can be no non-reducible $l$-boundary $\left(M-\frac{1}{2}\right)$-namings with $l \geq 3$. This shows 4), 5), and 6).
\end{proof}
\begin{lem} [Lemma on Neighborhoods of Namings]
\label{neighborhoodlem}
Let $N$ be a positive integer. Then:

1) around each interior $\left(\frac{N+3}{2}\right)$-naming of a specific point, there is a $2$-dimensional differentiable neighborhood of $\left(\frac{N+3}{2}\right)$-namings of that point, and

2) around each $1$-boundary $\left(\frac{N+3}{2}\right)$-naming of a specific point on a given boundary, there is a $1$-dimensional differentiable neighborhood of $\left(\frac{N+3}{2}\right)$-namings of that point that are on that boundary.
\end{lem}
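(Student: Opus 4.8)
\section*{Proof proposal}

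The plan is to exhibit the set of $\left(\frac{N+3}{2}\right)$-namings of a fixed point as a fiber of a single smooth map and then read off its local structure from the implicit function theorem, using the Lemma on Pseudo-Vandermonde Matrices to verify that the governing Jacobian has full rank. I would work with the augmented evaluation map $\Phi$ carrying a tuple $\left(c_1, \ldots, c_M, t_1, \ldots, t_M\right)$ to $\left(\sum_j c_j, \sum_j c_j t_j, \sum_j c_j t_j^2, \ldots, \sum_j c_j t_j^N\right) \in \Re^{N+1}$, where $M$ is the number of points in the naming; the extra top coordinate $\sum_j c_j$ encodes the convexity constraint. With this convention the $\left(\frac{N+3}{2}\right)$-namings of $\left(v_1, \ldots, v_N\right)$ are exactly the points of $\Phi^{-1}\left(1, v_1, \ldots, v_N\right)$ inside the naming domain (for even $N$ the coordinate $t_1$ is frozen at $t_{min}$ throughout, since then $\frac{N+3}{2}$ is a half-integer). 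If $\Phi$, or its restriction to a prescribed boundary, is a submersion at the naming, the implicit function theorem presents the fiber locally as a differentiable manifold whose dimension is the number of free coordinates minus $N+1$. I would check that this equals $2$ in the interior case of part 1), and that fixing one further boundary equation lowers it to $1$ in part 2).

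For the rank computation I would write down the Jacobian of $\Phi$. Each point contributes a value column $\frac{\partial \Phi}{\partial c_j} = \left(1, t_j, t_j^2, \ldots, t_j^N\right)^{T}$ and a derivative column $\frac{\partial \Phi}{\partial t_j} = c_j\left(0, 1, 2t_j, \ldots, N t_j^{N-1}\right)^{T}$, which are exactly the two kinds of columns appearing in a pseudo-Vandermonde matrix. For an interior naming all coefficients are nonzero, so the factors $c_j$ may be divided out of the derivative columns without changing the column span; I would then pick all $M$ value columns together with just enough derivative columns (at points whose positions are not frozen) to reach $N+1$ columns in total. Since the number of points $M$ satisfies $\frac{N+1}{2} \leq M \leq N+1$ and enough unfrozen derivative columns are available, this selection is a pseudo-Vandermonde matrix in the sense of the earlier lemma, hence nonsingular because the $t_j$ are distinct, so $\Phi$ has rank $N+1$. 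The fiber therefore has the expected dimension $2$ (the number of free coordinates, which is $2M$ for odd $N$ and $2M-1$ for even $N$, minus $N+1$), proving part 1).

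For part 2) I would restrict $\Phi$ to the single boundary in question, which removes one free coordinate, and recheck the rank. On an endpoint boundary ($t_1 = t_{min}$ or $t_M = t_{max}$) the frozen position simply deletes its derivative column; the surviving columns still contain a nonsingular pseudo-Vandermonde submatrix, since the number of points whose position remains free is at least $N+1-M$. The degenerate boundaries need more care. If a coefficient vanishes, say $c_j = 0$, then the derivative column at $t_j$ becomes the zero vector and $t_j$ disappears from $\Phi$ altogether, so $t_j$ persists as a free parameter while the evaluation is controlled by the $M-1$ active points; their value and derivative columns form a square pseudo-Vandermonde matrix with $q = M-1 = \frac{N+1}{2}$, again nonsingular. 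If two adjacent points collide, say $t_j = t_{j+1}$, the value columns $\frac{\partial \Phi}{\partial c_j}$ and $\frac{\partial \Phi}{\partial c_{j+1}}$ coincide; collapsing to the $M-1$ distinct active points, with combined coefficient $c_j + c_{j+1} \neq 0$, yields the same square pseudo-Vandermonde matrix. In each case the restricted map is a submersion and the fiber is a $1$-dimensional differentiable neighborhood lying on the given boundary.

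The main obstacle I expect is precisely these degenerate boundaries: there the naive Jacobian drops rank through a vanishing or a repeated column, so submersivity cannot be read off directly, and one must first pass to the reduced configuration of active or distinct points before applying the Lemma on Pseudo-Vandermonde Matrices. The essential thing to verify in each reduced configuration is that enough value columns survive, that is, that the number of distinct value points is at least $\frac{N+1}{2}$, so that the hypothesis $\frac{N+1}{2} \leq q$ of that lemma holds. Tracking the free coordinates, the active points, and the available derivative columns consistently across both parities of $N$ and all boundary types is where the bookkeeping must be done carefully, while the underlying nonsingularity is delivered cleanly by the earlier lemma.
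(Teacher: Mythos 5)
Your proposal follows essentially the same route as the paper's proof: both encode the convexity constraint as an extra top coordinate of the evaluation map, observe that the Jacobian's value and derivative columns become (after dividing each derivative column by its nonzero $c_j$ and selecting a square submatrix) a pseudo-Vandermonde matrix that is nonsingular by the Lemma on Pseudo-Vandermonde Matrices, and then apply the implicit function theorem with the same dimension count across the parity and boundary cases. The only divergence is that you also work through the degenerate boundaries ($c_j = 0$ for general $j$, or $t_j = t_{j+1}$), which the paper sidesteps by proving the lemma only for non-reducible namings (the only case its later lemmas invoke), so that extra bookkeeping, while broadly sound, is not required.
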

\begin{rmk}
This is what justifies the use of Lagrange multipliers in Lemma \ref{nolocalextremalem}, the Lack of Local Extrema Lemma.
\end{rmk}
\begin{proof}
Let $P$ be a non-reducible $\left(\frac{N+3}{2}\right)$-naming of a point
$
\left(
\begin{matrix}
v_1 \\
v_2 \\
\cdots \\
v_n \\
\end{matrix}
\right)
$
. Also, let $M$ be $\frac{N+3}{2}$ or $\frac{N+4}{2}$, whichever of these is an integer. Then, $P$, which is an $\left(\frac{N+3}{2}\right)$-naming is an $M$-naming (with its $t_1$ equal to $0$ if $M  = \frac{N+4}{2}$). Let $P$ be the convex combination 
$
\sum \limits _{j = 1} ^{M} {c_j
\left(
\begin{matrix}
t_j \\
t_j^2 \\
\cdots \\
t_j^N
\end{matrix}
\right)
}
$.
Since this must evaluate to 
$
\left(
\begin{matrix}
v_1 \\
v_2 \\
\cdots \\
v_n \\
\end{matrix}
\right)
$,
and since the sum of the $c_j$ is $1$, it follows that
\begin{align*}
\sum \limits _{j = 1} ^{M} {c_j
\left(
\begin{matrix}
1 \\
t_j \\
t_j^2 \\
\cdots \\
t_j^N
\end{matrix}
\right)
}
=
\left(
\begin{matrix}
1 \\
v_1 \\
v_2 \\
\cdots \\
v_N
\end{matrix}
\right)
\end{align*}
Let $\gamma$ be the function from the set of all $\left(\frac{N+3}{2}\right)$-namings to the convex hull of $C_N$, with 
\[
\gamma\left(c_1, c_2, \cdots, c_M, t_1, t_2, \cdots, t_M\right) = \left( \sum \limits _{j = 1} ^{M} {c_j
\left(
\begin{matrix}
1 \\
t_j \\
t_j^2 \\
\cdots \\
t_j^N
\end{matrix}
\right)
}
\right)
-
\left(
\begin{matrix}
1 \\
v_1 \\
v_2 \\
\cdots \\
v_N
\end{matrix}
\right)
\]
Thus, the vector condition becomes simply $\gamma\left(c_1, c_2, \cdots, c_M, t_1, t_2, \cdots, t_M\right) = 0$ (with $0$ being the $N$-dimensional zero vector).
For all $i \in \left \lbrace 0, 1, \cdots, N \right \rbrace$, let
\[
\gamma_i\left(c_1, c_2, \cdots, c_M, t_1, t_2, \cdots, t_M\right) = \left( \sum \limits _{j = 1} ^{M} {c_j t_j^i}\right)- v_i
\]
which means that the $\gamma_i$ are the components of $\gamma$ as a vector. For convenience, $v_0$ was defined to be $1$.

The partial derivatives of $\gamma_i$ are:
\[
\frac{\partial \gamma_i}{\partial c_{j'}} = t_{j'}^i \\
\]
and
\[
\frac{\partial \gamma_i}{\partial t_{j'}} = c_{j'}\left(i t_{j'}^{i-1}\right) \\
\]
If $M = \frac{N+4}{2}$, or if $P$ is a boundary $\left(\frac{N+3}{2}\right)$-naming, or both, then some of these derivatives are not necessary, because the variable they are with respect to is held constant, as in the following table (which also defines the \emph{type} of each case):

\begin{tabular}{|c|c|c|c|c|}
\hline 
$M$ & Boundary count & Boundary equation & Variables held constant & Type \\ 
\hline 
$\frac{N+3}{2}$ & $0$ & none & none & 1 \\ 
$\frac{N+3}{2}$ & $1$ & $t_1 = t_{min}$ & $t_1$ & 2a \\ 
$\frac{N+3}{2}$ & $1$ & $t_M = t_{max}$ & $t_M$ & 2b \\ 
$\frac{N+4}{2}$ & $0$ & none & $t_1$ & 3 \\ 
$\frac{N+4}{2}$ & $1$ & $t_M = t_{max}$ & $t_1$ and $t_M$ & 4a \\ 
$\frac{N+4}{2}$ & $1$ & $c_1 = 0$ & $t_1$ and $c_1$ & 4b \\ 
\hline 
\end{tabular}

By Lemma \ref{nonreducibleboundarylem}, the Lemma on Non-Reducible Boundary Namings, the $\left(\frac{N+3}{2}\right)$-namings of types 2a, 2b, 4a, and 4b are the only non-reducible $1$-boundary $\left(\frac{N+3}{2}\right)$-namings.

In any case, the matrix of partial derivatives (with the rows indicating which of the $\gamma_i$ is differentiated, and the columns indicating the variable with which the derivative is taken, in the order $c_1, c_2, \cdots, c_M, t_1, t_2, \cdots, t_M$) is:
\[
\left(
\begin{matrix}
1 & 1 & \cdots & 1 & 0 & 0 & \cdots & 0 \\
t_1 & t_2 & \cdots & t_M & c_1 & c_2 & \cdots & c_M \\
t_1^2 & t_2^2 & \cdots & t_M^2 & 2 c_1 t_1 & 2 c_2 t_2 & \cdots & 2 c_M t_M \\
t_1^3 & t_2^3 & \cdots & t_M^3 & 3 c_1 t_1^2 & 3 c_2 t_2^2 & \cdots & 3 c_M t_M^2 \\
\cdots & \cdots & \cdots & \cdots & \cdots & \cdots & \cdots & \cdots \\
t_1^N & t_2^N & \cdots & t_M^N & N c_1 t_1^{N-1} & N c_2 t_2^{N-1} & \cdots & N c_M t_M^{N-1} \\
\end{matrix}
\right)
\]
with zero, one, or two, of its columns removed, depending on the type of $P$. It will now be shown that this matrix has full rank. 

For convenience, call the columns by the name of the variable with respect to which the partial derivative was taken to get that column. The names of the removed columns are exactly the variables that are held constant.

Regardless of the type of $P$, the rank of the matrix is unaffected by a multiplication (or division) of one of its columns by a non-zero constant.  So, without affecting the rank, each non-removed $t_j$ column can be divided by $c_j$, as no $c_j$ is zero, or else $P$ would have been a reducible $\left(\frac{N+3}{2}\right)$-naming (except when $P$ is of type 4b, but then, $t_1$ was held constant, so the $t_1$ column was already removed).

This leaves the matrix
\[
\left(
\begin{matrix}
1 & 1 & \cdots & 1 & 0 & 0 & \cdots & 0 \\
t_1 & t_2 & \cdots & t_M & 1 & 1 & \cdots & 1 \\
t_1^2 & t_2^2 & \cdots & t_M^2 & 2 t_1 & 2 t_2 & \cdots & 2 t_M \\
t_1^3 & t_2^3 & \cdots & t_M^3 & 3 t_1^2 & 3 t_2^2 & \cdots & 3 t_M^2 \\
\cdots & \cdots & \cdots & \cdots & \cdots & \cdots & \cdots & \cdots \\
t_1^N & t_2^N & \cdots & t_M^N & N t_1^{N-1} & N t_2^{N-1} & \cdots & N t_M^{N-1} \\
\end{matrix}
\right)
\]
with some columns removed. This matrix has $N+1$ rows and at least $N+1$ columns (it had $2M \geq N+3$ columns before the removals, with at most $2$ columns removed). Thus, it has at least as many columns as it has rows. If some (perhaps none) columns can be removed to make this a square matrix that is not singular, then the matrix before this second removal has full rank. Which additional columns are removed depends on the type of $P$.

\begin{tabular}{|c|c|c|c|c|}
\hline 
Type & Removed columns & $M$ & Columns remaining & Further removed columns \\ 
\hline 
1 & none & $\frac{N+3}{2}$ & $N+3$ & $c_1$ and $t_1$  \\ 
2a & $t_1$ & $\frac{N+3}{2}$ & $N+2$ & $c_1$  \\ 
2b & $t_M$ & $\frac{N+3}{2}$ & $N+2$ & $c_M$  \\ 
3 & $t_1$ & $\frac{N+4}{2}$ & $N+3$ & $c_1$ and $t_M$  \\ 
4a & $t_1$ and $t_M$ & $\frac{N+4}{2}$ & $N+2$ & $c_1$  \\ 
4b & $t_1$ and $c_1$ & $\frac{N+4}{2}$ & $N+2$ & $t_M$  \\ 
\hline 
\end{tabular}

For types 1, 2a, and 2b, one of the $c_j$ columns and the corresponding one of the $t_j$ columns was removed, so the remaining matrix is a pseudo-Vandermonde matrix (since it is a square matrix and each ``derivative'' column has a corresponding ``
$
\left(
\begin{matrix}
1 \\
t_j \\
\cdots \\
t_j^N \\
\end{matrix}
\right)
$
'' column). For types 3, 4a, and 4b, one such pair of columns, along with another $t_j$ column, is removed, so, again, the remaining matrix is a pseudo-Vandermonde matrix. The $t_j$ are all distinct, so the remaining matrix is nonsingular and thus of full rank (rank $N+1$). Adding back the further removed columns leaves the rank at $N+1$, so the matrix still has full rank.

That means, by the Implicit Function Theorem \cite{WeissteinImplicit}, that, when $\gamma\left(c_1, \cdots, c_M, t_1, \cdots, t_N\right)$ is held at zero and the variables indicated by the type of $P$ are held constant according to type, all variables ($c_1, c_2, \cdots, c_M, t_1, t_2, \cdots, c_M$ except $t_1$ if $M = \frac{N+4}{2}$) are differentiable functions of the further-removed variables, or of the one further-removed variable (call them $y_1$ and $y_2$, or just $y_1$, as the case may be), in some neighborhood of $P\left(y_1\right)$ and $P\left(y_2\right)$, or just of $P\left(y_1\right)$, as the case may be. Furthermore, the number of variables is exactly as this lemma requires: $2$ if $P$ is of type $1$ or $3$ (a $0$-boundary naming), and $1$ if $P$ is of any other type (a $1$-boundary naming). 

This provies a differentiable neighborhood of tuples around $P$ with $\gamma = 0$. 

For all types of $P$:

\begin{tabular}{|c|c|c|c|}
\hline 
Type & $M$ & Fixed equalities & Boundaries \\
\hline
1 & $\frac{N+3}{2}$ & none & $0$ \\
2a & $\frac{N+3}{2}$ & $t_1 = t_{min}$ & $1$ \\
2b & $\frac{N+3}{2}$ & $t_M = t_{max}$ & $1$ \\
3 & $\frac{N+4}{2}$ & $t_1 = t_{min}$ & $0$ \\
4a & $\frac{N+4}{2}$ & $t_1 = t_{min}$ and $t_M = t_{max}$ & $1$ \\
4b & $\frac{N+4}{2}$ & $t_1 = t_{min}$ and $c_1 = 0$ & $1$ \\
\hline 
\end{tabular}

so there is no room for any other of $t_1 = 0$, or $t_M = 0$, or $c_1 = 0$ to hold for $P$, as that would increase the boundary count of the $\left(\frac{N+3}{2}\right)$-naming $P$. $P$ cannot be on any other boundary (no other $c_j$ can be zero and no two adjacent $t_j$ can be equal). Thus, if the neighborhood is small enough, the tuples in the neighborhood are $M$-namings of 
$
\left(
\begin{matrix}
v_1 \\
v_2 \\
\cdots \\
v_N \\
\end{matrix}
\right)
$,
as $\gamma\left(c_1, c_2, \cdots, c_M, t_1, t_2, \cdots, t_M\right) = 0$ ensures that they evaluate to that point and have the coefficients sum to $1$, and the coefficients (except $c_1$ if $N$ is even) are positive (because the coefficients of $P$ are positive) and are in strictly-increasing order (because that is true for $P$), and the $t_j$ (other than the exceptions in the table) are in the open interval $\left(t_{min}, t_{max}\right)$ (because that is true for $P$), and because any $c_j$ or $t_j$ that is an exception is at the same value as the same $c_j$ or $t_j$ of $P$.

Furthermore, if $\frac{N+3}{2}$ is a half-integer (so $M = \frac{N+4}{2}$), then the fixed $t_1 = 0$ equality guarantees that the $M$-namings in the neighborhood of $P$ are $\left(\frac{N+3}{2}\right)$-namings. If $\frac{N+3}{2}$ is an integer (so $M = \frac{N+3}{2}$), then the $M$-namings in the neighborhood of $P$ are automatically $\left(\frac{N+3}{2}\right)$-namings.

Also, if $P$ is a $1$-boundary $\left(\frac{N+3}{2}\right)$-naming, then the $\left(\frac{N+3}{2}\right)$-namings in the neighborhood of $P$ are likewise $1$-boundary namings, because they are on the same boundary as the boundary on which $P$ is. If, instead, $P$ is a $0$-boundary $\left(\frac{N+3}{2}\right)$-naming, then the $\left(\frac{N+3}{2}\right)$-namings in the neighborhood of $P$ are likewise $0$-boundary namings, because $P$ is at least a certain distance away from any boundary.

Thus, these tuples are, in fact, $\left(\frac{N+3}{2}\right)$-namings that are on the same boundaries on which $P$ is. So, in fact, there exists a $2$-dimensional differentiable neighborhood of $\left(\frac{N+3}{2}\right)$-namings, if $P$ is a $0$-boundary $\left(\frac{N+3}{2}\right)$-naming, or a $1$-dimensional differentiable neighborhood of $\left(\frac{N+3}{2}\right)$-namings, if $P$ is a $1$-boundary $\left(\frac{N+3}{2}\right)$-naming, every naming in which is on the same boundaries on which $P$ is. This is what this lemma claimed.
\end{proof}

\begin{lem} [Compactness Lemma]
\label{compactnesslem}
For all positive integers and positive half-integers $M'$, for all positive integers $N$, and for all points 
$
\left(
\begin{matrix}
v_1 \\
\cdots \\
v_N \\
\end{matrix}
\right)
$ in the convex hull of $C_N$, the set of all $M'$-namings of
$
\left(
\begin{matrix}
v_1 \\
\cdots \\
v_N \\
\end{matrix}
\right)
$
is compact.
\end{lem}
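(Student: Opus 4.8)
The plan is to exhibit the set of $M'$-namings of a fixed point as a closed and bounded subset of a finite-dimensional Euclidean space, and then to invoke the Heine--Borel theorem. First I would set $M = M'$ when $M'$ is an integer and $M = M' + \frac{1}{2}$ when $M'$ is a half-integer, so that each $M'$-naming is recorded as a tuple $\left(c_1, \ldots, c_M, t_1, \ldots, t_M\right) \in \Re^{2M}$. The set of $M'$-namings of $\left(v_1, \ldots, v_N\right)$ is then precisely the set of such tuples satisfying the finitely many constraints that make them namings of that particular point, so the whole question reduces to showing that this constraint set is compact in $\Re^{2M}$.

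Next I would establish boundedness. Since the $c_j$ are the coefficients of a convex combination, they satisfy $c_j \geq 0$ and $\sum_{j=1}^M c_j = 1$, so each $c_j \in \left[0,1\right]$; and since each point used lies on the curve $C_N$, whose parameter ranges over $\left[t_{min}, t_{max}\right]$, each $t_j \in \left[t_{min}, t_{max}\right]$. Hence the entire set is contained in the bounded box $\left[0,1\right]^M \times \left[t_{min}, t_{max}\right]^M$.

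Then I would establish closedness by writing the set as a finite intersection of closed conditions. The sign conditions $c_j \geq 0$, the ordering $t_{min} \leq t_1 \leq t_2 \leq \cdots \leq t_M \leq t_{max}$, the normalization $\sum_{j=1}^M c_j = 1$, and the $N$ evaluation equations $\sum_{j=1}^M c_j t_j^{\,i} = v_i$ for $i = 1, \ldots, N$ are each the preimage of a closed set (a closed half-line, or a single point) under a continuous function of the tuple, and so each is closed; when $M'$ is a half-integer, the extra condition $t_1 = t_{min}$ is likewise closed. A finite intersection of closed sets is closed, so the set of $M'$-namings is closed. Since it is also bounded, it is compact by Heine--Borel, which is the claim (and if the set happens to be empty, it is still trivially compact).

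I do not expect a serious obstacle, as the argument is a routine application of Heine--Borel. The only points requiring care are keeping track of the correct ambient dimension $2M$ in the two cases (integer versus half-integer $M'$), and remembering to include the ordering inequalities and, in the half-integer case, the constraint $t_1 = t_{min}$ among the closed conditions, since omitting any of them would describe a strictly larger set than the set of namings.
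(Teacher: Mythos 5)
Your proposal is correct and matches the paper's own argument: both realize the set of $M'$-namings as a subset of $\Re^{2M}$ cut out by finitely many closed conditions (nonnegativity, ordering, normalization, the evaluation equations, and $t_1 = t_{min}$ in the half-integer case), observe boundedness from $c_j \in \left[0,1\right]$ and $t_j \in \left[t_{min}, t_{max}\right]$, and conclude by Heine--Borel. No substantive difference.
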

\begin{proof}
Let $M$ = $M'$ or $M' + \frac{1}{2}$, whichever of these is an integer. It suffices to show that this set is closed and bounded. Let $\left(c_1, \cdots, c_M, t_1, \cdots, t_N\right)$ be a naming of the point
$
\left(
\begin{matrix}
v_1 \\
\cdots \\
v_N
\end{matrix}
\right)
$. The set of tuples meeting each individual condition of being an $M$-naming of
$
\left(
\begin{matrix}
v_1 \\
\cdots \\
v_N
\end{matrix}
\right)
$ 
(that each coefficient be nonnegative, that each $t_j$ be not lower than the previous one, that each $t_j$ be in the closed interval $\left[t_{min}, t_{max}\right]$, that the sum of the coefficient be $1$, and that 
$
\sum \limits _{j = 1} ^{M} {c_j
\left(
\begin{matrix}
t_j \\
\cdots \\
t_j^N
\end{matrix}
\right)
}
=
\left(
\begin{matrix}
v_1 \\
\cdots \\
v_N
\end{matrix}
\right)
$
) is closed, so the set of $M$-namings of 
$
\left(
\begin{matrix}
v_1 \\
\cdots \\
v_N
\end{matrix}
\right)
$
is an intersection of closed sets, and is thus closed. The set of tuples meeting the additional constraint $t_1 = t_{min}$ is also closed, which makes the set of all $M'$-namings of 
$
\left(
\begin{matrix}
v_1 \\
\cdots \\
v_N
\end{matrix}
\right)
$ 
closed regardless of whether $M'$ is an integer or a half-integer (as the intersection of two closed sets is closed). The set of $M$-namings of $
\left(
\begin{matrix}
v_1 \\
\cdots \\
v_N
\end{matrix}
\right)
$ 
is bounded, because each $t_j$ (point on the curve $C_N$) is in the closed interval $\left[t_{min}, t_{max}\right]$, and because each $c_j$ (coefficient) is in the closed interval $\left[0, 1\right]$. The set of $M'$-namings of 
$
\left(
\begin{matrix}
v_1 \\
\cdots \\
v_N
\end{matrix}
\right)
$ 
is a subset of the set of $M$-namings of
$
\left(
\begin{matrix}
v_1 \\
\cdots \\
v_N
\end{matrix}
\right)
$, 
so it is likewise bounded. Thus, the set of $M'$-namings of 
$
\left(
\begin{matrix}
v_1 \\
\cdots \\
v_N
\end{matrix}
\right)
$ 
is closed and bounded, and therefore, it is compact.
\end{proof}
\begin{lem}
\label{reducibility01lem}
If $N = 0$ or $N = 1$, then any point in the convex hull of $C_N$ has a reducible $\left(\frac{N+3}{2}\right)$-naming.
\end{lem}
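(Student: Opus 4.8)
The plan is to treat $N=0$ and $N=1$ as explicit base cases, exhibiting in each a concrete reducible $\left(\frac{N+3}{2}\right)$-naming of an arbitrary target point. The common idea is that for these small $N$ the convex hull is so simple that every point already has a $1$-naming; I then pad that $1$-naming up to the required size by duplicating its single point, which produces two adjacent terms using the same point, and that is exactly one of the reducibility conditions.

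First, for $N=1$ I would identify the convex hull. The curve $C_1$ is the image of $t \mapsto \left(t\right)$ on $\left[t_{min}, t_{max}\right]$, which is the interval $\left[t_{min}, t_{max}\right]$ itself; being already convex, it equals its own convex hull. Hence an arbitrary point of the convex hull is $\left(v_1\right)$ with $v_1 \in \left[t_{min}, t_{max}\right]$, and $\left(v_1\right) = C_1\left(v_1\right)$. Here $\frac{N+3}{2} = 2$ is an integer, so I would take the $2$-naming $c_1 = c_2 = \frac{1}{2}$, $t_1 = t_2 = v_1$, and then verify against the definitions: the coefficients are nonnegative and sum to $1$; $t_1 \le t_2$ and both lie in $\left[t_{min}, t_{max}\right]$; and the combination evaluates to $\frac{1}{2}\left(v_1\right) + \frac{1}{2}\left(v_1\right) = \left(v_1\right)$. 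Since its two adjacent terms use the same point, it is reducible.

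Next, for $N=0$ the curve $C_0$ lies in $\Re^0$, a single point, so the convex hull is that one point and any tuple with coefficients summing to $1$ names it. Here $\frac{N+3}{2} = \frac{3}{2}$, so I need a $2$-naming with $t_1 = t_{min}$. I would take $c_1 = c_2 = \frac{1}{2}$ and $t_1 = t_2 = t_{min}$, which is a valid $\left(\frac{3}{2}\right)$-naming because its first point is at $t_{min}$, and which is reducible because its two adjacent terms use the same point.

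The routine checks against the naming conditions are all that remains, and the one genuine subtlety — the main obstacle, such as it is — lies in the $N=0$ case: by definition a zero first coefficient does \emph{not} make a half-integer naming reducible, so I deliberately avoid padding with a zero coefficient and instead use the duplicated-point route, whose reducibility condition (two adjacent terms at the same $t$) is valid for half-integer namings as well. With this choice both cases go through by direct verification.
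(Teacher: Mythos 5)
Your proof is correct and takes essentially the same approach as the paper: exhibit an explicit reducible $\left(\frac{N+3}{2}\right)$-naming of an arbitrary point in each of the two cases. The only difference is the witness chosen — the paper triggers reducibility via a second term with coefficient zero (namely $\left(1, 0, t_{min}, t_{max}\right)$ for $N=0$ and $\left(1, 0, v_1, t_{max}\right)$ for $N=1$), while you duplicate the single point with coefficients $\frac{1}{2}, \frac{1}{2}$; both witnesses satisfy the definitions, and your observation about $c_1 = 0$ not sufficing for half-integer namings is a valid (if unneeded for the paper's witness, since there the zero coefficient is $c_2$) precaution.
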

\begin{proof}
If $N = 0$, then $\frac{N+3}{2} = \frac{3}{2}$. $\left(1, 0, t_{min}, t_{max}\right)$ is a $2$-naming of
$
\left(
\begin{matrix}
v_1 \\
v_2 \\
v_3 \\
\cdots \\
v_N
\end{matrix}
\right)
$, as all the $t$s are in increasing order, and satisfy $t_{min} \leq t_j \leq t_{max}$, and as all the $c$s are nonnegative and sum to 1, and as
$
\sum \limits _{j = 1} ^{M} {c_j
\left(
\begin{matrix}
t_j \\
t_j^2 \\
t_j^3 \\
\cdots \\
t_j^N
\end{matrix}
\right)
}
=
\left(
\begin{matrix}
v_1 \\
v_2 \\
v_3 \\
\cdots \\
v_N
\end{matrix}
\right)
$ is satisfied automatically because both sides are vectors with no components. $\left(1, 0, t_{min}, t_{max}\right)$ is a $\left(\frac{3}{2}\right)$-naming, as $t_1 = t_{min}$; it is also reducible, because $c_2 = 0$, so the lemma holds for $N = 0$.

If $N = 1$, then $\frac{N+3}{2} = 2$. $\left(1, 0, v_1, t_{max}\right)$ is a $2$-naming of
$
\left(
\begin{matrix}
v_1 \\
v_2 \\
v_3 \\
\cdots \\
v_N
\end{matrix}
\right)
$
, as all the $t$s are in increasing order, and satisfy $t_{min} \leq t_j \leq t_{max}$ (as both of these statements hold because $t_{min} \leq v_1 \leq t_{max}$, and this is required because $C_N$ is the set $\left(v_1\right)$ with $t_{min} \leq v_1 \leq t_{max}$, which is its own convex hull), and as the $c$s are all nonnegative and sum to $1$, and as
$
\sum \limits _{j = 1} ^{M} {c_j
\left(
\begin{matrix}
t_j \\
t_j^2 \\
t_j^3 \\
\cdots \\
t_j^N
\end{matrix}
\right)
}
=
\left(
\begin{matrix}
v_1 \\
v_2 \\
v_3 \\
\cdots \\
v_N
\end{matrix}
\right)
$
is satisfied (because $N = 1$ and $\sum \limits _{j = 1} ^{2} {c_j t_j} = c_1 t_1 + c_2 t_2 = 1(v_1) + 0(t_{max}) = v_1$). $\left(1, 0, v_1, t_{max}\right)$ is reducible because $c_2 = 0$, so the lemma holds for $N = 1$ also.
\end{proof}
\begin{lem} [Lack of Local Extrema Lemma]
\label{nolocalextremalem}
For any point in the convex hull of $C_N$, no $0$-boundary $\left(\frac{N+3}{2}\right)$-naming of that point or non-reducible $1$-boundary $\left(\frac{N+3}{2}\right)$-naming of that point is a local maximum or a local minimum of any of its $t_j$ variables that were not fixed. "Local" means "in the $2$-dimensional or $1$-dimensional neighborhood that is guaranteed to exist by Lemma \ref{neighborhoodlem}, the Lemma on Neighborhoods of Namings".
\end{lem}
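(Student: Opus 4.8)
The plan is to argue by contradiction, using Lagrange multipliers exactly as the remark after Lemma \ref{neighborhoodlem} anticipates. Suppose that within the differentiable neighborhood furnished by Lemma \ref{neighborhoodlem} some non-fixed coordinate $t_{j_0}$ attains a local maximum or minimum at the naming $P$. That neighborhood is the zero set of the constraints $\gamma_i = 0$, $i = 0, 1, \ldots, N$, inside the space of the free variables, and the proof of Lemma \ref{neighborhoodlem} already showed that the corresponding Jacobian (a pseudo-Vandermonde matrix after the columns are scaled) has full rank $N+1$. Thus the constraint qualification holds, so at a constrained extremum of $t_{j_0}$ the gradient of $t_{j_0}$ must be a linear combination of the constraint gradients: there exist multipliers $\lambda_0, \ldots, \lambda_N$ with $\nabla t_{j_0} = \sum_{i=0}^{N} \lambda_i \nabla \gamma_i$, all gradients taken with respect to the free variables only.

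First I would convert this into a statement about a single polynomial. Set $p(x) = \sum_{i=0}^{N} \lambda_i x^i$, a polynomial of degree at most $N$. Using $\partial \gamma_i / \partial c_{j'} = t_{j'}^i$ and $\partial \gamma_i / \partial t_{j'} = c_{j'} \, i \, t_{j'}^{i-1}$ from Lemma \ref{neighborhoodlem}, the multiplier equation says: $p(t_{j'}) = 0$ for each free coefficient $c_{j'}$; $c_{j'} p'(t_{j'}) = 0$ for each free position $t_{j'}$ with $j' \ne j_0$; and $c_{j_0} p'(t_{j_0}) = 1$. Since $P$ is non-reducible, every free coefficient is strictly positive, so these become $p(t_{j'}) = 0$, then $p'(t_{j'}) = 0$, and finally $p'(t_{j_0}) = 1/c_{j_0} \ne 0$, respectively.

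Next I would bound the number of roots of $p$ counted with multiplicity. Any $t_{j'}$ with $j' \ne j_0$ that is a common zero of $p$ and $p'$ is at least a double root; the extremal position $t_{j_0}$ is a simple root, because $p'(t_{j_0}) \ne 0$; and a fixed endpoint $t_1$ or $t_M$ whose coefficient is still free contributes at least a simple root. Running through the six types of Lemma \ref{neighborhoodlem}, and substituting $M = \frac{N+3}{2}$ for the odd types and $M = \frac{N+4}{2}$ for the even types, the total of the multiplicities is $N+2$ for the two interior types (1 and 3) and $N+1$ for the four non-reducible $1$-boundary types (2a, 2b, 4a, 4b). In every case this exceeds $N$, so a polynomial of degree at most $N$ with this many roots must be identically zero; that is, all $\lambda_i = 0$. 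But then $c_{j_0} p'(t_{j_0}) = 0$, contradicting $c_{j_0} p'(t_{j_0}) = 1$. Hence no such local extremum can occur.

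The step I expect to be the main obstacle is the root count itself, which is really six separate tallies. For each type I must identify precisely which coefficients are free (these furnish the zeros of $p$) and which positions are free (the non-extremal ones furnish the zeros of $p'$), so that the double roots, the single root at $t_{j_0}$, and any simple endpoint roots are combined correctly. The delicate point is that a held-constant position such as $t_1$ still supplies a zero of $p$ through its free coefficient but no zero of $p'$, whereas in type 4b the held-constant coefficient $c_1 = 0$ supplies neither. Once it is verified that the uniform outcome is one or two more roots than the degree permits, the contradiction with $p'(t_{j_0}) \ne 0$ is immediate.
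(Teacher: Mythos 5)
Your proposal is correct and follows essentially the same route as the paper's own proof: Lagrange multipliers justified by the full-rank Jacobian from Lemma \ref{neighborhoodlem}, the identification of the multiplier equations with values of a degree-$\leq N$ polynomial $P$ and its derivative at the $t_{j'}$, a type-by-type root count yielding $N+1$ or $N+2$ roots, and the resulting contradiction with $p'(t_{j_0}) = 1/c_{j_0} \neq 0$. Your tallies ($N+2$ for types 1 and 3, $N+1$ for types 2a, 2b, 4a, 4b) and your handling of the type 4b subtlety (where $t_1$ contributes no root at all) match the paper's tables exactly.
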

\begin{proof}
Let $M$ be $\frac{N+3}{2}$ or $\frac{N+4}{2}$, whichever of these is an integer. Also let $\left(c_1, c_2, \cdots, c_M, t_1, t_2, \cdots, t_M\right)$ be a $0$-boundary $\left(\frac{N+3}{2}\right)$-naming of 
$
\left(
\begin{matrix}
v_1
v_2
\cdots
v_N
\end{matrix}
\right)
$, or a non-reducible $1$-boundary $\left(\frac{N+3}{2}\right)$-naming of that point. By Lemma \ref{neighborhoodlem}, the Lemma on Neighborhoods of Namings, there is a $2$-dimensional or $1$-dimensional neighborhood of 
$
\left(
\begin{matrix}
v_1
v_2
\cdots
v_N
\end{matrix}
\right)
$ around $\left(c_1, c_2, \cdots, c_M, t_1, t_2, \cdots, t_M\right)$. (The number of dimensions depends on the number of boundaries: $2$ dimensions for a non-reducible $1$-boundary $\left(\frac{N+3}{2}\right)$-naming, or $1$ dimension for a non-reducible $1$-boundary $\left(\frac{N+3}{2}\right)$-naming.) 

Suppose that $\left(c_1, c_2, \cdots, c_M, t_1, t_2, \cdots, t_M\right)$ is a local maximum or a local minimum of $t_s$, where $s$ is such that $t_s$ is not fixed. Lagrange multipliers can be used to check if that is indeed the case.

Lemma \ref{nonreducibleboundarylem}, the Lemma on Non-Reducible Boundary Namings, shows that, if $M = \frac{N+3}{2}$, then all non-reducible $1$-boundary $\left(\frac{N+3}{2}\right)$-namings satisfy either $t_1 = t_{min}$ or $t_M = t_{max}$, and if $M = \frac{N+4}{2}$, then all non-reducible $1$-boundary $\left(\frac{N+3}{2}\right)$-namings satisfy either $t_M = t_{max}$ or $c_1 = 0$.

The possibilities for the number and equations of boundaries, as well as for whether $M$ is  $\frac{N+3}{2}$ or $\frac{N+4}{2}$ will be classified by type (which means exactly the same as in Lemma \ref{neighborhoodlem} (the Lemma on Neighborhoods of Namings). The type determines the fixed variables:

\begin{tabular}{|c|c|c|c|}
\hline 
Type & $M$ & Boundaries & Fixed variables\\
\hline
1 & $\frac{N+3}{2}$ & none & none \\
2a & $\frac{N+3}{2}$ & $t_1 = t_{min}$ & $t_1 = t_{min}$ \\
2b & $\frac{N+3}{2}$ & $t_M = t_{min}$ & $t_M = t_{max}$ \\
3 & $\frac{N+4}{2}$ & none & $t_1 = t_{min}$ \\
4a & $\frac{N+4}{2}$ & $t_M = t_{max}$ & $t_1 = t_{min}$ and $t_M = t_{max}$ \\
4b & $\frac{N+4}{2}$ & $c_1 = 0$ & $t_1 = t_{min}$ and $c_1 = 0$ \\
\hline 
\end{tabular}

The constraints on $\left(c_1, c_2, \cdots, c_M, t_1, t_2, \cdots, t_N\right)$ come from the definition of $M$-naming (any fixed variables will be considered to be constants, and so, they do not contribute additional constraints), and they are:
\begin{align*}
& \sum \limits _{j = 1} ^{M} {c_j} = 1 \\
& \sum \limits _{j = 1} ^{M} {c_j
\left(
\begin{matrix}
t_j \\
t_j^2 \\
t_j^3 \\
\cdots \\
t_j^N
\end{matrix}
\right)
}
=
\left(
\begin{matrix}
v_1 \\
v_2 \\
v_3 \\
\cdots \\
v_N
\end{matrix}
\right) \\
\end{align*}
which can be rewritten as a system of scalar constraints
\[
\forall _{i \in \left \lbrace 0, 1, \cdots, N \right \rbrace} \sum \limits _{j = 1} ^M {c_j t_j^i} = v_i
\]
with $v_0 = 1$.

Regardless of the type, Lagrange multipliers specify that the gradient of the optimized function (let $t_s$ be optimized) must equal a linear combination of the gradients of the constraints. Lagrange multipliers can be used, because each $0$-boundary naming and each non-reducible $1$-boundary naming has a neighborhood of the correct dimension (as shown in Lemma \ref{neighborhoodlem}, the Lemma on Neighborhoods of Namings) so:

\begin{align*}
\nabla t_{j'} &= \sum \limits _{i = 0} ^{N} {\left(\lambda_i \nabla \sum \limits _{j = 1} ^M {c_j t_j^i}\right)} \\
\left(
\begin{matrix}
\frac{\partial t_s}{\partial c_1} \\
\frac{\partial t_s}{\partial c_2} \\
\cdots \\
\frac{\partial t_s}{\partial c_M} \\
\frac{\partial t_s}{\partial t_1} \\
\frac{\partial t_s}{\partial t_2} \\
\cdots \\
\frac{\partial t_s}{\partial t_M} \\
\end{matrix}
\right)
&= \sum \limits _{i = 0} ^{N} {\left(\lambda_i
\left(
\begin{matrix}
\frac{\partial}{\partial c_1} \left[\sum \limits _{j = 1} ^M {c_j t_j^i}\right] \\
\cdots \\
\frac{\partial}{\partial c_M} \left[\sum \limits _{j = 1} ^M {c_j t_j^i}\right] \\
\frac{\partial}{\partial t_1} \left[\sum \limits _{j = 1} ^M {c_j t_j^i}\right] \\
\cdots \\
\frac{\partial}{\partial t_M} \left[\sum \limits _{j = 1} ^M {c_j t_j^i}\right] \\
\end{matrix}
\right)
\right)} \\
\end{align*}
The components whose variables were fixed are understood to be skipped. This simplifies to:
\begin{align*}
\forall j'_{c_{j'} \text{ is not fixed}} \frac{\partial t_s}{\partial c_{j'}} &= \sum \limits _{i = 0} ^{N} {\left(\lambda_i c_{j'} t_{j'}^i\right)} \\
\forall j'_{t_{j'} \text{ is not fixed}} \frac{\partial t_s}{\partial t_{j'}} &= \sum \limits _{i = 0} ^{N} {\left(i \lambda_i c_{j'} t_{j'}^{i-1}\right)} \\
\\
\forall j'_{c_{j'} \text{ is not fixed}} \frac{\partial t_s}{\partial c_{j'}} &= \sum \limits _{i = 0} ^{N} {\lambda_i t_{j'}^i} \\
\forall j'_{t_{j'} \text{ is not fixed}} \frac{1}{c_{j'}}\frac{\partial t_s}{\partial t_{j'}} &= \sum \limits _{i = 0} ^{N} {i \lambda_i  t_{j'}^{i-1}} \\
\end{align*}
The last step is justified, because if $c_{j'} = 0$, then that makes $\left(c_1, c_2, \cdots, c_M, t_1, t_2, \cdots, t_M\right)$ a reducible naming (unless $j' = 1$ and $M = \frac{N+4}{2}$, but $t_1$ is fixed then, so that equation was not considered in that case).

Let $P\left(z\right) = \sum \limits _{i = 0} ^{N} {\lambda_i z^i}$ be a polynomial of degree $N$ or less in $z$. Then, its derivative is $\frac{dP}{dz} = \sum \limits _{i = 0} ^{N} {i \lambda_i z^{i-1}}$. Therefore, the conditions simplify to:
\begin{align*}
\forall j'_{c_{j'} \text{ is not fixed}} \frac{\partial t_s}{\partial c_{j'}} &= P\left(t_{j'}\right) \\
\forall j'_{t_{j'} \text{ is not fixed}} \frac{1}{c_{j'}}\frac{\partial t_s}{\partial t_{j'}} &= \left(\frac{dP}{dz}\right)_{z = t_{j'}} \\
\end{align*}
All (except $\frac{\partial t_s}{\partial t_s}$ and the partial derivatives with respect to fixed variables) of the $\frac{\partial t_s}{\partial c_{j'}}$ and $\frac{\partial t_s}{\partial t_{j'}}$ are zero. Therefore, with some exceptions,
\begin{align*}
P\left(t_{j'}\right) &= 0 \\
\left(\frac{dP}{dz}\right)_{z = t_{j'}} &= 0 \\
\end{align*}
The exceptions depend on type, and are:

\begin{tabular}{|c|c|c|c|}
\hline 
Type & $M$ & Fixed variables & Exceptions\\
\hline
1 & $\frac{N+3}{2}$ & none & $t_s$ \\
2a & $\frac{N+3}{2}$ & $t_1 = t_{min}$ & $t_1$ and $t_s$ \\
2b & $\frac{N+3}{2}$ & $t_M = t_{max}$ & $t_s$ and $t_M$ \\
3 & $\frac{N+4}{2}$ & $t_1 = t_{min}$ & $t_1$ and $t_s$ \\
4a & $\frac{N+4}{2}$ & $t_1 = t_{min}$ and $t_M = t_{max}$ & $t_1$ and $t_s$ and $t_M$ \\
4b & $\frac{N+4}{2}$ & $t_1 = t_{min}$ and $c_1 = 0$ & $t_1$ and $t_s$ and $c_1$ \\
\hline 
\end{tabular}

Note that $t_s$ is not a duplicate of any other exception, because $s$ was defined to have $t_s$ not fixed.

If $j'$ is such that neither $c_{j'}$ nor $t_{j'}$ is an exception, then $P\left(t_{j'}\right)$ and $\left(\frac{dP}{dz}\right)_{z = t_{j'}} = 0$, so $t_{j'}$ is a double root of $P$. If $j'$ is such that only $t_{j'}$ is an exception, then $P\left(t_{j'}\right)$ and $\left(\frac{dP}{dz}\right)_{z = t_{j'}} = 0$, so $t_{j'}$ is a root of $P$. As all the $t$s are distinct (otherwise, $\left(c_1, c_2, \cdots, c_M, t_1, t_2, \cdots t_M\right)$ is reducible), these facts can be used to provide a lower bound on the number of roots of $P$. This table shows the minimum order of each root $t_{j'}$ (which could be $0$).

\begin{tabular}{|c|c|c|c|c|c|c|}
\hline 
Type & $M$ & Exceptions & Non-roots & Single roots & Total roots & Total roots\\
\hline
1 & $\frac{N+3}{2}$ & $t_s$ & none & $t_s$ & $2M-1$ & $N+2$ \\
2a & $\frac{N+3}{2}$ & $t_1$ and $t_s$ & none & $t_1$ and $t_s$ & $2M-2$ & $N+1$ \\
2b & $\frac{N+3}{2}$ & $t_s$ and $t_M$ & none & $t_s$ and $t_M$ & $2M-2$ & $N+1$ \\
3 & $\frac{N+4}{2}$ & $t_1$ and $t_s$ & none & $t_1$ and $t_s$ & $2M-2$ & $N+2$ \\
4a & $\frac{N+4}{2}$ & $t_1$ and $t_s$ and $t_M$ & none & $t_1$ and $t_s$ and $t_M$ & $2M-3$ & $N+1$ \\
4b & $\frac{N+4}{2}$ & $t_1$ and $t_s$ and $c_1$ & $t_1$ & $t_s$ & $2M-3$ & $N+1$ \\
\hline 
\end{tabular}

In each type, $P$ is an $N$-th degree polynomial with more than $N$ roots. Thus, $P$ is the zero polynomial. However, $\forall j'_{t_{j'} \text{ is not fixed}} \frac{1}{c_{j'}}\frac{\partial t_2}{\partial t_{j'}} = \left(\frac{dP}{dz}\right)_{z = t_{j'}}$ implies that $\frac{1}{c_2} = \left(\frac{dP}{dz}\right)_{z = t_2}$, but this statement cannot be correct, given that $P$ is the zero polynomial. 

Thus, the system of equations from the Lagrange multipliers leads to a contradiction, and thus, has no solutions. This means that $\left(c_1, c_2, \cdots, c_M, t_1, t_2, \cdots t_M\right)$ is neither a local maximum of $t_s$ nor a local minimum of $t_s$. Therefore, for any point in the convex hull of $C_N$, no $0$-boundary $\left(\frac{N+3}{2}\right)$-naming of that point or non-reducible $1$-boundary $\left(\frac{N+3}{2}\right)$-naming of that point is a local maximum or a local minimum of any of its $t_j$ variables that were not fixed.
\end{proof}
\begin{lem} [Uniqueness of the $2$-Boundary Non-reducible Naming Lemma]
\label{unique2boundarylem}
For a point in the convex hull of $C_N$, there is at most one non-reducible $2$-boundary $\left(\frac{N+3}{2}\right)$-naming.
\end{lem}
\begin{proof}
Let $M$ be $\frac{N+3}{2}$ or $\frac{N+4}{2}$, whichever of these is an integer. Let 
$
\sum \limits _{j = 1} ^{M} {c_j
\left(
\begin{matrix}
t_j \\
t_j^2 \\
\cdots \\
t_j^N
\end{matrix}
\right)
}
$
and
$
\sum \limits _{j = 1} ^{M} {c_j
\left(
\begin{matrix}
t_j \\
t_j^2 \\
\cdots \\
t_j^N
\end{matrix}
\right)
}
$
be two non-reducible $2$-boundary $\left(\frac{N+3}{2}\right)$-namings of the same point in the convex hull of $C_N$. Thus,
\[
\sum \limits _{j = 1} ^{M} {c_j
\left(
\begin{matrix}
1 \\
t_j \\
t_j^2 \\
\cdots \\
t_j^N
\end{matrix}
\right)
}
=
\sum \limits _{j = 1} ^{M} {c'_j
\left(
\begin{matrix}
1 \\
{t'}_j \\
{t'}_j^2 \\
\cdots \\
{t'}_j^N
\end{matrix}
\right)
}
\]
as the equality in the first components comes from the coefficients summing to $1$, and the equalities in the other components come from the namings evaluating to the same point.
By the definition of the half-integer naming, if $M = \frac{N+4}{2}$, then $t_1 = t_{min}$ and ${t'}_1 = t_{min}$. Further, by Lemma \ref{nonreducibleboundarylem}, the Lemma on Non-Reducible Boundary Namings, if $M = \frac{N+3}{2}$, then $t_1 = t_{min}$, ${t'}_1 = t_{min}$, $t_M = t_{max}$, and ${t'}_M = t_{max}$, and if $M = \frac{N+4}{2}$, then $t_M = t_{max}$, ${t'}_M = t_{max}$, $c_1 = 0$, and ${c'}_1 = 0$. In either case, $t_1 = t_{min}$, ${t'}_1 = t_{min}$, $t_M = t_{max}$, and ${t'}_M = t_{max}$ hold. Thus, the previous statement implies that:
\begin{align*}
& c_1
\left(
\begin{matrix}
1 \\
t_1 \\
t_1^2 \\
\cdots \\
t_1^N
\end{matrix}
\right)
+
\sum \limits _{j = 2} ^{M-1} {c_j
\left(
\begin{matrix}
1 \\
t_j \\
t_j^2 \\
\cdots \\
t_j^N
\end{matrix}
\right)
}
+
c_M
\left(
\begin{matrix}
1 \\
t_M \\
t_M^2 \\
\cdots \\
t_M^N
\end{matrix}
\right)
 =
c'_1
\left(
\begin{matrix}
1 \\
{t'}_1 \\
{t'}_1^2 \\
\cdots \\
{t'}_1^N
\end{matrix}
\right)
+
\sum \limits _{j = 2} ^{M-1} {c'_j
\left(
\begin{matrix}
1 \\
{t'}_j \\
{t'}_j^2 \\
\cdots \\
{t'}_j^N
\end{matrix}
\right)
}
+
c'_M
\left(
\begin{matrix}
1 \\
{t'}_M \\
{t'}_M^2 \\
\cdots \\
{t'}_M^N
\end{matrix}
\right)
\\
& c_1
\left(
\begin{matrix}
1 \\
t_{min} \\
t_{min}^2 \\
\cdots \\
t_{min}^N
\end{matrix}
\right)
+
\sum \limits _{j = 2} ^{M-1} {c_j
\left(
\begin{matrix}
1 \\
t_j \\
t_j^2 \\
\cdots \\
t_j^N
\end{matrix}
\right)
}
+
c_M
\left(
\begin{matrix}
1 \\
t_{max} \\
t_{max}^2 \\
\cdots \\
t_{max}^N
\end{matrix}
\right)
 =
c'_1
\left(
\begin{matrix}
1 \\
t_{min} \\
t_{min}^2 \\
\cdots \\
t_{min}^N
\end{matrix}
\right)
+
\sum \limits _{j = 2} ^{M-1} {c'_j
\left(
\begin{matrix}
1 \\
{t'}_j \\
{t'}_j^2 \\
\cdots \\
{t'}_j^N
\end{matrix}
\right)
}
+
c'_M
\left(
\begin{matrix}
1 \\
t_{max} \\
t_{max}^2 \\
\cdots \\
t_{max}^N
\end{matrix}
\right)
\\
& \left(c_1 - c'_1\right)
\left(
\begin{matrix}
1 \\
t_{min} \\
t_{min}^2 \\
\cdots \\
t_{min}^N
\end{matrix}
\right)
+
\sum \limits _{j = 2} ^{M-1} {c_j
\left(
\begin{matrix}
1 \\
t_j \\
t_j^2 \\
\cdots \\
t_j^N
\end{matrix}
\right)
}
-
\sum \limits _{j = 2} ^{M-1} {c'_j
\left(
\begin{matrix}
1 \\
{t'}_j \\
{t'}_j^2 \\
\cdots \\
{t'}_j^N
\end{matrix}
\right)
}
+
\left(c_M - c'_M\right)
\left(
\begin{matrix}
1 \\
t_{max} \\
t_{max}^2 \\
\cdots \\
t_{max}^N
\end{matrix}
\right)
=
\left(
\begin{matrix}
0 \\
0 \\
0 \\
\cdots \\
0 \\
\end{matrix}
\right)
\\
\end{align*}
Since, if $M = \frac{N+4}{2}$, then $c_1 = 0$ and $c'_1 = 0$ (as shown earlier), in that case, the first term disappears. This can be rewritten in matrix form:
\[ 
\left(
\begin{matrix}
1 & 1 & \cdots & 1 & 1 & \cdots & 1 & 1 \\
t_{min} & t_2 & \cdots & t_{M-1} & {t'}_2 & \cdots & {t'}_{M-1} & t_{max} \\
t_{min}^2 & t_2^2 & \cdots & t_{M-1}^2 & {t'}_2^2 & \cdots & {t'}_{M-1}^2 & t_{max}^2 \\
\cdots & \cdots & \cdots & \cdots & \cdots & \cdots & \cdots & \cdots \\
t_{min}^N & t_2^N & \cdots & t_{M-1}^N & {t'}_2^N & \cdots & {t'}_{M-1}^N & t_{max}^N \\
\end{matrix}
\right)
\left(
\begin{matrix}
c_1 - c'_1 \\
c_2 \\
\cdots \\
c_{M-1} \\
-c'_2 \\
\cdots \\
-c'_{M-1} \\
c_M - c'_M \\
\end{matrix}
\right)
=
\left(
\begin{matrix}
0 \\
0 \\
0 \\
\cdots \\
0 \\
\end{matrix}
\right)
\]
without its first column if $M = \frac{N+4}{2}$. The number of rows ($N+1$) and the number of columns ($2M-2$ if $M = \frac{N+3}{2}$, or $2M-2$ if $M = \frac{N+3}{2}$) are equal, so this is a square matrix. This is also a pseudo-Vandermonde matrix (in fact, it is a Vandermonde matrix). Thus, it is nonsingular, unless two of the $t$s and $t'$s are equal. 

If two $t$s or two $t'$s are equal (with $t_{min}$ and $t_{max}$ in both groups), one of the namings is reducible. If one $t$ and one $t'$ are equal, then in the vector form of the equation, merge the two terms, discard the last component of all the vectors, and reconvert to matrix form. The result is still a pseudo-Vandermonde matrix, and the $c$ vector has a $c$ and a $-c'$ component (with the same indices as the $t$ and the $t'$) replaced by the corresponding $c - c'$. 

As long as some $t$ equals some $t'$ (neither of them can be one of the $t$s in the already-merged terms, as that would be both a $t$ and a $t'$, and if it equals another $t$ or another $t'$, then one of the namings is reducible), repeat the procedure. Eventually, the $t$s would all be distinct. 

At this point, the $c$ vector is all zero, as the pseudo-Vandermonde matrix is nonsingular. If there were fewer than $M-2$ mergings, then in the $c$ vector, at least one of the $c$s is unmerged with a $c'$, and is thus zero, making 
$
\sum \limits _{j = 1} ^{M} {c_j
\left(
\begin{matrix}
t_j \\
t_j^2 \\
\cdots \\
t_j^N
\end{matrix}
\right)
}
$ 
reducible. (If $M = \frac{N+4}{2}$, then $c_1$ is not one of the $c$s in the $c$ vector, so it is another $c$ that is $0$.) If there were $M - 2$ mergings (and there cannot be more, because each merging involves one of the $t$s, and they are different from each other and from $t_{min}$ and $t_{max}$), then all the entries in the $c$ vector are $c - c'$ differences, each of which having coefficients the same as those of a merged pair of a $t$ and a $t'$. These differences are all zero, but that makes for a $c$ and a $c'$ equalling each other when the $t$ and the $t'$ with the same indices equal each other. These equalities cover all of $c_2, \cdots, c_{M-1}$ and $c_2, \cdots, c_{M-1}$, so 
$
\sum \limits _{j = 2} ^{M-1} {c_j
\left(
\begin{matrix}
1 \\
t_j \\
t_j^2 \\
\cdots \\
t_j^N
\end{matrix}
\right)
}
$ 
and 
$
\sum \limits _{j = 2} ^{M-1} {c'_j
\left(
\begin{matrix}
1 \\
{t'}_j \\
{t'}_j^2 \\
\cdots \\
{t'}_j^N
\end{matrix}
\right)
}
$ 
are the same sum (and the terms are not rearranged because the $t_j$ are in ascending order, as are the $t'_j$). Since $t_1 = t_{min} = t'_1$ and $t_M = t_{max} = t'_M$, and since $c_1 = c'_1$ (from $c_1 - c'_1 = 0$ if $M = \frac{N+3}{2}$, or from $c_1 = 0 = c'_1$ if $M = \frac{N+4}{2}$) and $c_M = c'_M$ (from $c_M - c'_M = 0$), it follows that the two namings 
$
\sum \limits _{j = 1} ^{M} {c_j
\left(
\begin{matrix}
1 \\
t_j \\
t_j^2 \\
\cdots \\
t_j^N
\end{matrix}
\right)
}
$ 
and 
$
\sum \limits _{j = 1} ^{M} {c'_j
\left(
\begin{matrix}
1 \\
{t'}_j \\
{t'}_j^2 \\
\cdots \\
{t'}_j^N
\end{matrix}
\right)
}
$ 
are term-for-term the same. Thus, no point in the convex hull of $C_N$ can have two distinct $2$-boundary $\left(\frac{N+3}{2}\right)$-namings.
\end{proof}
\begin{lem} [Non-Isolation of the $2$-Boundary Non-reducible Naming Lemma]
\label{nonisolationlemma}
If a point in the convex hull of $C_N$ has a non-reducible $2$-boundary $\left(\frac{N+3}{2}\right)$-naming, then that point has at least one other $\left(\frac{N+3}{2}\right)$-naming.
\end{lem}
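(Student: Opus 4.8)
Let $P$ be the given non-reducible $2$-boundary $\left(\frac{N+3}{2}\right)$-naming of the point $v$ in the convex hull of $C_N$. The plan is to show that $P$ is not an isolated naming by relaxing a single boundary condition and letting the corresponding variable slide a little into the interior. By the Lemma on Non-Reducible Boundary Namings (Lemma \ref{nonreducibleboundarylem}, parts 2 and 5), the naming $P$ uses the point $t_{max}$, that is, $t_M = t_{max}$, in both the integer case ($M = \frac{N+3}{2}$, where $P$ also satisfies $t_1 = t_{min}$) and the half-integer case ($M = \frac{N+4}{2}$, where $P$ satisfies the required $t_1 = t_{min}$ and, in addition, $c_1 = 0$). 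I would keep all of the other active conditions of $P$ fixed --- namely $t_1 = t_{min}$, together with $c_1 = 0$ in the half-integer case --- and treat $t_M$ as a free parameter. The aim is to produce, through $P$, a one-parameter family of namings of the same point $v$, and then to use its members with $t_M < t_{max}$ as the required other namings.

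Following the same setup as in the Lemma on Neighborhoods of Namings (Lemma \ref{neighborhoodlem}), I would write the naming condition as $\gamma = 0$, so that each zero of $\gamma$ satisfying the fixed conditions is again a naming of $v$. First I would form the Jacobian of $\gamma$ with respect to the variables that are neither fixed nor equal to the parameter $t_M$, and rescale each surviving $t_{j'}$ column by $\frac{1}{c_{j'}}$ exactly as in that lemma (legitimate because the relevant $c_{j'}$ are positive, $P$ being non-reducible). In the integer case the surviving columns are the value columns $c_1, \dots, c_M$ and the derivative columns $t_2, \dots, t_{M-1}$, giving a square $\left(N+1\right) \times \left(N+1\right)$ matrix with $q = M = \frac{N+3}{2}$ value columns; in the half-integer case $c_1$ and $t_1$ are removed as well, leaving the value columns $c_2, \dots, c_M$ and the derivative columns $t_2, \dots, t_{M-1}$, again a square $\left(N+1\right) \times \left(N+1\right)$ matrix, now with $q = M-1 = \frac{N+2}{2}$ value columns. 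In each case, after a reordering of the columns, this is a pseudo-Vandermonde matrix whose value points are pairwise distinct and whose $q$ satisfies $\frac{N+1}{2} \leq q \leq N+1$, so it is nonsingular by the Lemma on Pseudo-Vandermonde Matrices (Lemma \ref{vandermondelem}). The Implicit Function Theorem then makes the remaining variables differentiable functions of $t_M$ for $t_M$ near $t_{max}$, with $\gamma = 0$ maintained throughout, so $t_M$ is a genuine local parameter along a one-parameter family of namings of $v$.

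Finally I would check validity: for $t_M$ slightly below $t_{max}$, the resulting tuple is still a bona fide $\left(\frac{N+3}{2}\right)$-naming of $v$. The equation $\gamma = 0$ keeps the coefficients summing to $1$ and the evaluation equal to $v$; the fixed conditions keep $t_1 = t_{min}$ in force (and, in the half-integer case, keep $c_1 = 0$); and, by continuity from $P$, every coefficient that is positive at $P$ stays positive while the $t_j$ stay strictly increasing and inside $\left[t_{min}, t_{max}\right]$, now with $t_{M-1} < t_M < t_{max}$. Such a naming differs from $P$ because it does not use the point $t_{max}$, whereas $P$ does; hence $v$ has at least one other $\left(\frac{N+3}{2}\right)$-naming, as claimed.

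The main obstacle I expect is the bookkeeping that identifies precisely which columns survive in each of the two cases and confirms that what remains is a nonsingular pseudo-Vandermonde matrix in the range covered by Lemma \ref{vandermondelem}; in particular, one must notice that it is admissible to parametrize by $t_M$ rather than by the coefficient used in Lemma \ref{neighborhoodlem}, because removing the $t_M$ column still leaves the matrix square and nonsingular. Once that is settled, the continuity argument guaranteeing that a small decrease of $t_M$ preserves positivity of the coefficients and strict ordering of the points is routine.
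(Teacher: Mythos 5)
Your proof is correct, but it takes a genuinely different route from the paper's. The paper does not parametrize by $t_M$ directly: instead it introduces an \emph{$\epsilon$-extended} naming space in which the $t$-values may range over $\left[t_{min}, t_{max}+\epsilon\right]$, observes that $P$ drops from a $2$-boundary to a $1$-boundary naming in that enlarged space, invokes the Lemma on Neighborhoods of Namings to get a $1$-dimensional family of $\epsilon$-extended namings through $P$, and then appeals to the Lack of Local Extrema Lemma (the Lagrange-multiplier argument) to rule out the possibility that $t_M \geq t_{max}$ throughout that family. You instead run the implicit-function-theorem computation once, choosing $t_M$ itself as the free parameter and solving for all the remaining unfixed variables; since the IFT then exhibits $t_M$ as ranging over a full open interval about $t_{max}$, values $t_M < t_{max}$ are obtained immediately and no extremum argument is needed. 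Your column bookkeeping checks out in both parities: in the integer case you keep $q = \frac{N+3}{2}$ value columns and $\frac{N-1}{2}$ derivative columns, and in the half-integer case $q = \frac{N+2}{2}$ value columns and $\frac{N}{2}$ derivative columns, each a square matrix that becomes a pseudo-Vandermonde matrix with admissible $q$ after a column permutation, and the non-reducibility of $P$ supplies both the distinctness of the $t_j$ and the positivity of the $c_{j'}$ used to rescale. What your approach buys is economy and self-containedness: it avoids the slightly delicate step of applying the Neighborhood and Local-Extrema lemmas to the $\epsilon$-extended space (where they are not literally stated) and it does not depend on the Lack of Local Extrema Lemma at all. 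What the paper's approach buys is reuse: it needs no new Jacobian computation, only the two lemmas already established, at the cost of the $\epsilon$-extension device.
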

\begin{proof}
Let $M$ be $\frac{N+3}{2}$ or $\frac{N+4}{2}$, whichever of these is an integer. For any positive real number $\epsilon$, let an \emph{$\epsilon$-extended $M$-naming} of a point 
$
\left(
\begin{matrix}
v_1 \\
v_2 \\
v_3 \\
\cdots \\
v_N \\
\end{matrix}
\right)
$ be the same thing as a naming of that point, except that the $t$-values are allowed to be in the closed interval $\left[t_{min}, t_{max} + \epsilon\right]$, rather than in $\left[t_{min}, t_{max}\right]$. Let $P = \left(c_1, c_2, \cdots, c_M, t_1, t_2, \cdots t_M\right)$ be a $2$-boundary $\left(\frac{N+3}{2}\right)$-naming of this point. Then, $P$ is an $\epsilon$-extended $\left(\frac{N+3}{2}\right)$-naming of the same point, as the only change in the conditions was an extension in the interval that the $t$s can be in. As an $\epsilon$-extended $\left(\frac{N+3}{2}\right)$-naming, $P$ is a $1$-boundary naming (with the boundary being $t_1 = t_{min}$ if $M = \frac{N+3}{2}$, or $c_1 = 0$ if $M = \frac{N+4}{2}$), as the extension moved the $t_M = t_{max}$ boundary away from this naming.

Thus, by Lemma \ref{neighborhoodlem}, the Lemma on Neighborhoods of Namings, $\left(c_1, c_2, \cdots, c_M, t_1, t_2, \cdots t_M\right)$ has a $1$-dimenstional neighborhood of $\epsilon$-extended $\left(\frac{N+3}{2}\right)$-namings of
$
\left(
\begin{matrix}
v_1 \\
v_2 \\
v_3 \\
\cdots \\
v_N
\end{matrix}
\right)
$ around it, that satisfy $t_1 = t_{min}$ if $M = \frac{N+3}{2}$, or that satisfy $c_1 = 0$ if $M = \frac{N+4}{2}$. It remains to be shown that at least some of these $\epsilon$-extended $\left(\frac{N+3}{2}\right)$-namings are in fact $\left(\frac{N+3}{2}\right)$-namings.

The only way that an $\epsilon$-extended $\left(\frac{N+3}{2}\right)$-naming would not be an $\left(\frac{N+3}{2}\right)$-naming is that $t_M > t_{max}$ (but still, $t_M \leq t_{max} + \epsilon$). The only way that none of the $\epsilon$-extended $\left(\frac{N+3}{2}\right)$-namings in this $1$-dimensional neighborhood (except $\left(c_1, c_2, \cdots, c_M, t_1, t_2, \cdots t_N\right)$ itself) would be $\left(\frac{N+3}{2}\right)$-namings is that $t_M \geq t_{max}$ in the whole neighborhood, which means that $t_M$ has a local minimum at $\left(c_1, c_2, \cdots, c_M, t_1, t_2, \cdots t_N\right)$. Lemma \ref{nolocalextremalem}, the Lack of Local Extrema Lemma, shows that $t_M$ (which is not fixed) is not a local minimum (or a local maximum) at $\left(c_1, c_2, \cdots, c_M, t_1, t_2, \cdots t_N\right)$. Thus, there is a contradiction, so, at least some of these $\epsilon$-extended $\left(\frac{N+3}{2}\right)$-namings are in fact $\left(\frac{N+3}{2}\right)$-namings.
\end{proof}

\begin{lem} [Existence of Reducible Namings Lemma]
\label{reduciblelem}
If a point in the convex hull of $C_N$ has an $\left(\frac{N+3}{2}\right)$-naming, then that point also has a reducible $\left(\frac{N+3}{2}\right)$-naming.
\end{lem}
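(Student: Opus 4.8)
The plan is to argue by contradiction. Suppose the point $v$ has an $\left(\frac{N+3}{2}\right)$-naming but no reducible one, so that the entire set $S$ of $\left(\frac{N+3}{2}\right)$-namings of $v$ consists of non-reducible namings. First I would dispose of the base cases $N=0$ and $N=1$ by Lemma \ref{reducibility01lem}, so that $N\ge 2$; then, writing $M=\frac{N+3}{2}$ or $\frac{N+4}{2}$ (whichever is an integer), we have $M\ge 3$, so the index $M-1$ is a genuine middle index and $t_{M-1}$ is never one of the fixed variables $t_1$, $t_M$, or $c_1$. By the Compactness Lemma \ref{compactnesslem}, $S$ is compact and, by hypothesis, nonempty, so every coordinate function attains its extrema on $S$. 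Throughout, non-reducibility of every member of $S$ is what lets me invoke both Lemma \ref{neighborhoodlem} and Lemma \ref{nolocalextremalem} at each naming.

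First I would show that every naming in $S$ satisfies $t_M=t_{max}$. Minimizing the continuous function $t_M$ over $S$, if the minimizer were of type $1$, $2a$, $3$, or $4b$ (in the terminology of Lemmas \ref{neighborhoodlem} and \ref{nolocalextremalem}), then $t_M$ would be a non-fixed variable there, and since a global minimum is a local minimum inside the differentiable neighborhood supplied by Lemma \ref{neighborhoodlem} (which names the same point and so lies in $S$), the Lack of Local Extrema Lemma \ref{nolocalextremalem} would be contradicted. Hence the minimizer lies on the boundary $t_M=t_{max}$, so $\min_S t_M=t_{max}$; as $t_M\le t_{max}$ always holds, $t_M\equiv t_{max}$ on $S$. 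Consequently, by Lemma \ref{nonreducibleboundarylem}, $S$ contains no $0$-boundary namings, and each member is either a non-reducible $1$-boundary naming of type $2b$/$4a$ or the $2$-boundary naming.

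Next I would pin down the $2$-boundary naming and force $t_{M-1}$ to be constant on $S$. Minimizing $t_{M-1}$ over $S$, a $1$-boundary minimizer is impossible: its $1$-dimensional neighborhood from Lemma \ref{neighborhoodlem} keeps $t_M=t_{max}$ and names the same point, hence lies inside $S$, and since $t_{M-1}$ is not fixed there, Lemma \ref{nolocalextremalem} is again contradicted. So the minimizer is a $2$-boundary naming $Q$, which is unique by Lemma \ref{unique2boundarylem}. The identical argument applied to the maximizer of $t_{M-1}$ forces it to be $Q$ as well, whence $\max_S t_{M-1}=t_{M-1}(Q)=\min_S t_{M-1}$, so $t_{M-1}$ is constant on $S$. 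Now Lemma \ref{nonisolationlemma} produces a second naming $P\in S\setminus\{Q\}$; by the structure already established, $P$ is a $1$-boundary naming, so its $1$-dimensional neighborhood lies in $S$, where $t_{M-1}$ is constant. That makes $P$ a (weak) local extremum of the non-fixed variable $t_{M-1}$, contradicting Lemma \ref{nolocalextremalem}. This final contradiction shows $v$ must have a reducible $\left(\frac{N+3}{2}\right)$-naming.

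The hard part will be orchestrating the last two steps so that the Uniqueness Lemma \ref{unique2boundarylem} and the Non-Isolation Lemma \ref{nonisolationlemma} collaborate with the extremal argument to squeeze $S$ down around the single point $Q$. Two details demand care: verifying that the neighborhoods from Lemma \ref{neighborhoodlem} genuinely sit inside $S$ — same point, same boundary $t_M=t_{max}$ — so that a global extremum on $S$ really is a local extremum in the neighborhood; and checking that Lemma \ref{nolocalextremalem} excludes even weak, locally constant extrema (its Lagrange-multiplier proof rules out any critical point, not merely strict ones). It is precisely this strengthening that allows the constancy of $t_{M-1}$ to collide with the extra naming $P$ guaranteed by Lemma \ref{nonisolationlemma}.
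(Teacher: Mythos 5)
Your proof is correct and follows essentially the same route as the paper's: the paper extremizes $t_2$ over the compact naming space, uses Lemmas \ref{neighborhoodlem}, \ref{nolocalextremalem}, and \ref{unique2boundarylem} to conclude that both extremizers must be the unique non-reducible $2$-boundary naming (so $t_2$ is constant), and then derives the same final contradiction from Lemma \ref{nonisolationlemma}. Your preliminary stage forcing $t_M \equiv t_{max}$ on $S$ is valid but not needed, and your choice of $t_{M-1}$ in place of the paper's $t_2$ is immaterial, since either is a non-fixed coordinate once $M \geq 3$.
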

\begin{proof}
If $N \leq 1$, then this was shown in Lemma \ref{reducibility01lem}, so only the $N \geq 2$ case needs to be considered here.  $N \geq 2$ means that either $M \geq \frac{5}{2}$ or $M \geq 3$, but as $M$ is an integer, $M \geq 3$. 
Let $M$ be $\frac{N+3}{2}$ or $\frac{N+4}{2}$, whichever of these is an integer.

The space of $\frac{N+3}{2}$-namings of a point 
$
\left(
\begin{matrix}
v_1 \\
v_2 \\
\cdots \\
v_N \\
\end{matrix}
\right)
$
is compact, by Lemma \ref{compactnesslem}, the Compactness Lemma (with $M' = \frac{N+3}{2}$).  Thus, any continuous function, including $t_2$, from this space to the real numbers must have a global minimum and a global maximum. Let $MAX$ be the global maximum and $MIN$ be the global minimum. $MAX$ (and $MIN$) is a $0$-boundary $\frac{N+3}{2}$-naming, or a non-reducible $1$-boundary $\frac{N+3}{2}$-naming, or a non-reducible $2$-boundary $\frac{N+3}{2}$-naming, or a reducible $\frac{N+3}{2}$-naming.

If $MAX$ or $MIN$ is a $0$-boundary $\left(\frac{N+3}{2}\right)$-naming or a $1$-boundary non-reducible $\left(\frac{N+3}{2}\right)$-naming, then, by Lemma \ref{neighborhoodlem}, the Lemma on Neighborhoods of Namings, there is a differentiable neighborhood of $\frac{N+3}{2}$-namings around $MAX$ or $MIN$. Thus, $MAX$ is a local maximum and $MIN$ is a local minimum. However, Lemma \ref{nolocalextremalem}, the Lack of Local Extrema Lemma, shows this not to be the case, so, neither $MAX$ nor $MIN$ is a $0$-boundary $\left(\frac{N+3}{2}\right)$-naming or a $1$-boundary non-reducible $\left(\frac{N+3}{2}\right)$-naming

If there is no reducible $\left(\frac{N+3}{2}\right)$-naming of $
\left(
\begin{matrix}
v_1 \\
v_2 \\
\cdots \\
v_N \\
\end{matrix}
\right)
$, then that means that both $MAX$ and $MIN$ are $2$-boundary non-reducible $\left(\frac{N+3}{2}\right)$-namings of this point. By Lemma \ref{unique2boundarylem}, the Uniqueness of the $2$-Boundary Non-reducible Naming Lemma, $MAX$ and $MIN$ are the same. That makes $t_2$ a constant. 

By Lemma \ref{nonisolationlemma}, the Non-Isolation of the $2$-Boundary Non-reducible Naming Lemma, there is another $\left(\frac{N+3}{2}\right)$-naming of this point than $MAX$ (or $MIN$) (call it $OTHER$). If $OTHER$ is reducible, then this lemma is correct. Otherwise, $OTHER$ is a $0$-boundary $\left(\frac{N+3}{2}\right)$-naming or a non-reducible $1$-boundary $\left(\frac{N+3}{2}\right)$-naming. ($OTHER$ cannot be a $2$-boundary $\left(\frac{N+3}{2}\right)$-naming of this point, because $MAX$, which is also $MIN$, is the only such $\left(\frac{N+3}{2}\right)$-naming.)

By Lemma \ref{neighborhoodlem}, the Lemma on Neighborhoods of Namings, there is a differentiable neighborhood of $\frac{N+3}{2}$-namings of $
\left(
\begin{matrix}
v_1 \\
v_2 \\
\cdots \\
v_N \\
\end{matrix}
\right)
$ around $OTHER$. Since $t_2$ is a constant, $OTHER$ is both a local maximum and a local minimum of $t_2$. However, Lemma \ref{nolocalextremalem}, the Lack of Local Extrema Lemma, shows this not to be the case.

Thus, the non-existence of reducible $\left(\frac{N+3}{2}\right)$-namings of this point leads to a contradiction. Thus, if a point in the convex hull of $C_N$ has an $\left(\frac{N+3}{2}\right)$-naming, then that point also has a reducible $\left(\frac{N+3}{2}\right)$-naming.
\end{proof}
\begin{lem} [Full Dimension Reduction Lemma]
\label{fulldimreductionlem}
If a point in the convex hull of $C_N$ has an $\left(\frac{N+3}{2}\right)$-naming, then that point has an $\left(\frac{N+1}{2}\right)$-naming.
\end{lem}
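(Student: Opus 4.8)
The plan is to obtain this lemma as an immediate composition of the two preceding results, so no new machinery is needed. By hypothesis the point has an $\left(\frac{N+3}{2}\right)$-naming, so Lemma \ref{reduciblelem}, the Existence of Reducible Namings Lemma, guarantees that the same point also has a \emph{reducible} $\left(\frac{N+3}{2}\right)$-naming. All of the genuinely delicate work has already been discharged there.

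With a reducible naming in hand, I would then invoke Lemma \ref{dimreductionlem}, the Dimension Reduction Lemma, taking $M' = \frac{N+3}{2}$. That lemma asserts that any point possessing a reducible $M'$-naming also possesses an $\left(M'-1\right)$-naming. Here $M'-1 = \frac{N+3}{2} - 1 = \frac{N+1}{2}$, which is precisely the naming whose existence is claimed, so the conclusion follows directly.

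The only points to verify are that the hypotheses align. The quantity $\frac{N+3}{2}$ is a positive integer when $N$ is odd and a positive half-integer when $N$ is even, which is exactly the class of $M'$ that the Dimension Reduction Lemma admits; and the reduction it carries out (merging two adjacent terms using the same point, or deleting a term of coefficient zero) lowers the point-count by exactly one while preserving the integer-versus-half-integer status, since in the half-integer case the fixed first term at $t_{min}$ is never the term removed. I expect no genuine obstacle at this stage: the statement is essentially a one-line corollary of Lemmas \ref{reduciblelem} and \ref{dimreductionlem}, the substantive analysis (compactness, the Lagrange-multiplier argument ruling out local extrema, and the uniqueness and non-isolation of $2$-boundary non-reducible namings) having all been absorbed into the proof that a reducible $\left(\frac{N+3}{2}\right)$-naming exists in the first place.
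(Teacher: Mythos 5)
Your proposal is correct and matches the paper's own proof exactly: both obtain a reducible $\left(\frac{N+3}{2}\right)$-naming from Lemma \ref{reduciblelem} and then apply Lemma \ref{dimreductionlem} with $M' = \frac{N+3}{2}$ to drop to an $\left(\frac{N+1}{2}\right)$-naming. Your write-up is in fact slightly more careful than the paper's, which contains a typo (it says the reducible naming is an $\left(\frac{N+1}{2}\right)$-naming when it should say $\left(\frac{N+3}{2}\right)$-naming).
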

\begin{proof}
By Lemma \ref{reduciblelem}, the Existence of Reducible Namings Lemma, if a point in the convex hull of $C_N$ has an $\left(\frac{N+3}{2}\right)$-naming, then that point has a reducible $\left(\frac{N+1}{2}\right)$-naming. Therefore, by Lemma \ref{dimreductionlem}, the Dimension Reduction Lemma, that point has an $\left(\frac{N+1}{2}\right)$-naming.
\end{proof}
\begin{lem} [Generalized Dimension Reduction Lemma]
\label{gendimreductionlem}
For any positive integer or positive half-integer $M'$, such that $M' - \frac{N+3}{2}$ is a nonnegative integer, if a point in the convex hull of $C_N$ has an $M'$-naming, then that point also has an $\left(M'-1\right)$-naming.
\end{lem}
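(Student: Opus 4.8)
The plan is to prove this in one step, without induction, by reducing it to Lemma \ref{fulldimreductionlem}, the Full Dimension Reduction Lemma, through a dimension-lifting argument. The point is that Lemma \ref{fulldimreductionlem} is exactly the case $M' = \frac{N+3}{2}$, and for larger $M'$ the number $M'$ can be made to play the role of $\frac{N'+3}{2}$ for a suitably larger dimension $N'$. Concretely, I would set $N' = 2M' - 3$, so that $\frac{N'+3}{2} = M'$ and $\frac{N'+1}{2} = M' - 1$; since $M' - \frac{N+3}{2}$ is a nonnegative integer $k$, one has $N' = N + 2k \geq N$, and $N'$ and $N$ have the same parity, so $M'$ is an integer for $C_{N'}$ exactly when it is an integer for $C_N$.

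First I would lift the given naming. Suppose the point $v = \left(v_1, \cdots, v_N\right)$ has the $M'$-naming $\sum_{j=1}^{M} c_j C_N\left(t_j\right)$, where $M = M'$ or $M' + \frac{1}{2}$, whichever is an integer. Using the same coefficients and the same $t$-values, form the point $w = \sum_{j=1}^{M} c_j C_{N'}\left(t_j\right)$ in the convex hull of $C_{N'}$. Since the coefficients, their sum, the ordering and the range of the $t_j$, and (if $M'$ is a half-integer) the condition $t_1 = t_{min}$ are all unchanged, this is an $M'$-naming of $w$, that is, a $\left(\frac{N'+3}{2}\right)$-naming of $w$. Applying Lemma \ref{fulldimreductionlem} to $C_{N'}$ then yields an $\left(\frac{N'+1}{2}\right)$-naming of $w$, which is an $\left(M'-1\right)$-naming of $w$.

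Finally I would project back. Let $\pi: \Re^{N'} \rightarrow \Re^N$ be the projection onto the first $N$ coordinates, so that $\pi\left(C_{N'}\left(t\right)\right) = C_N\left(t\right)$ for every $t$. Writing the $\left(M'-1\right)$-naming of $w$ as $\sum c'_j C_{N'}\left(t'_j\right)$ and applying $\pi$ term by term gives $\sum c'_j C_N\left(t'_j\right)$, which uses the same coefficients and $t$-values and is therefore an $\left(M'-1\right)$-naming in the convex hull of $C_N$. Because $\pi$ commutes with curve evaluation, the point it names is $\pi\left(w\right) = \sum c_j C_N\left(t_j\right) = v$, so $v$ itself has the desired $\left(M'-1\right)$-naming.

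The step requiring the most care is the bookkeeping that lifting and projecting preserve every structural condition of a naming: nonnegativity of the coefficients and that they sum to $1$; that the $t$-values stay in $\left[t_{min}, t_{max}\right]$ and in increasing order; and, most delicately, the half-integer condition $t_1 = t_{min}$, which must be tracked through both directions and relies on the parity check $N' \equiv N \pmod 2$ so that the integer-versus-half-integer status of $M'$ is the same for $C_N$ and $C_{N'}$. One should also confirm that $N' = 2M' - 3$ is a positive integer lying in the range where Lemma \ref{fulldimreductionlem} applies, which follows from $M' \geq \frac{N+3}{2} \geq 2$.
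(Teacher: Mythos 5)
Your proof is correct, but it takes a genuinely different route from the paper. The paper keeps the dimension $N$ fixed: it splits off the first $Q$ terms of the given $M'$-naming (where $Q = \frac{N+3}{2}$ or $\frac{N+4}{2}$), rescales them so their coefficients sum to $1$ to obtain a $\left(\frac{N+3}{2}\right)$-naming of an auxiliary point in the same convex hull, applies Lemma \ref{fulldimreductionlem} to that auxiliary point, and then substitutes the shortened combination back into the original sum and reorders the terms. Your argument instead changes the dimension: you lift the entire naming to the moment curve $C_{N'}$ with $N' = 2M'-3$, where the given naming is exactly a $\left(\frac{N'+3}{2}\right)$-naming, apply Lemma \ref{fulldimreductionlem} there, and project back onto the first $N$ coordinates. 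Both are legitimate; your version is arguably cleaner in that it avoids the rescaling step (which in the paper's proof silently presumes $\sum_{s=1}^{Q} c_s \neq 0$) and avoids having to re-sort the recombined terms, at the cost of invoking Lemma \ref{fulldimreductionlem} for a different (larger) dimension $N'$ --- which is fine, since that lemma and its supporting chain are proved for every positive integer $N$ with $t_{min}$ and $t_{max}$ arbitrary, and your parity check $N' \equiv N \pmod 2$ together with $N' = 2M'-3 \geq 1$ ensures the integer-versus-half-integer bookkeeping and the $t_1 = t_{min}$ condition survive both the lift and the projection.
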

\begin{proof}
Let $M$ be $M'$ or $M' + \frac{1}{2}$, whichever of these is an integer, and let
$
\sum \limits _{j = 1} ^{M} {c_j
\left(
\begin{matrix}
t_1 \\
\cdots \\
t_N \\
\end{matrix}
\right)
}
$ 
be an $M'$-naming of a point
$
\left(
\begin{matrix}
v_1 \\
\cdots \\
v_N \\
\end{matrix}
\right)
$
in the convex hull of $C_N$. Let $Q$ be $\frac{N+3}{2}$ or $\frac{N+4}{2}$, whichever of these is an integer. The first $Q$ terms of this sum, rescaled to have their coefficients sum to $1$, form the sum
$
\sum \limits _{j = 1} ^Q {
\left( \frac{c_j}{\sum \limits _{s = 1} ^{Q} {c_s}}
\left(
\begin{matrix}
t_j \\
\cdots \\
t_j^N
\end{matrix}
\right)
\right)}
$, 
which is a $Q$-naming of some point in the convex hull of $C_N$ (as it is a convex combination of $Q$ points on $C_N$). This is also a $\left(\frac{N+3}{2}\right)$-naming (as, if $Q = \frac{N+4}{2}$, then $N$ is even, and thus, $M'$ is a half-integer, which means that $t_1 = t_{min}$, making this $\left(\frac{N+4}{2}\right)$-naming an $\left(\frac{N+3}{2}\right)$-naming). By Lemma \ref{fulldimreductionlem}, the Full Dimension Reduction Lemma, the point that is the result of evaluating 
$
\sum \limits _{j = 1} ^Q {
\left( \frac{c_j}{\sum \limits _{s = 1} ^{Q} {c_s}}
\left(
\begin{matrix}
t_j \\
\cdots \\
t_j^N
\end{matrix}
\right)
\right)}
$ 
has an $\left(\frac{N+1}{2}\right)$-naming, which can be written as  
$
\sum \limits _{j = 1} ^{Q-1} {d_j
\left(
\begin{matrix}
u_j \\
\cdots \\
u_j^N
\end{matrix}
\right)}
$. Then (with equality here meaning equality as algebraic expressions):
\begin{align*}
\sum \limits _{j = 1} ^M {c_j
\left(
\begin{matrix}
t_j \\
\cdots \\
t_j^N
\end{matrix}
\right)}
&=
\left(\sum \limits _{s = 1} ^{Q} {c_s}\right)
\sum \limits _{j = 1} ^Q {
\left( \frac{c_j}{\sum \limits _{s = 1} ^{Q} {c_s}}
\left(
\begin{matrix}
t_j \\
\cdots \\
t_j^N
\end{matrix}
\right)
\right)}
+
\sum \limits _{j = Q+1} ^M {c_j
\left(
\begin{matrix}
t_j \\
\cdots \\
t_j^N
\end{matrix}
\right)} \\
&=
\left(\sum \limits _{s = 1} ^{Q} {c_s}\right)
\sum \limits _{j = 1} ^{Q-1} {d_j
\left(
\begin{matrix}
u_j \\
\cdots \\
u_j^N
\end{matrix}
\right)}
+
\sum \limits _{j = Q+1} ^M {c_j
\left(
\begin{matrix}
t_j \\
\cdots \\
t_j^N
\end{matrix}
\right)} \\
&=
\sum \limits _{j = 1} ^{Q-1} {
\left(
\left(d_j\sum \limits _{s = 1} ^{Q} {c_s}\right)
\left(
\begin{matrix}
u_j \\
\cdots \\
u_j^N
\end{matrix}
\right)
\right)}
+
\sum \limits _{j = Q+1} ^M {c_j
\left(
\begin{matrix}
t_j \\
\cdots \\
t_j^N
\end{matrix}
\right)} \\
\end{align*}
This, after reordering its terms (in ascending order of the $t$ of the point used in each term), however, is a $\left(M-1\right)$-naming of 
$
\left(
\begin{matrix}
v_1 \\
\cdots \\
v_N \\
\end{matrix}
\right)
$, 
as it is a convex combination of $M-1$ points on $C_N$, and it evaluates to the point to which 
$
\sum \limits _{j = 1} ^M {c_j
\left(
\begin{matrix}
t_j \\
\cdots \\
t_j^N
\end{matrix}
\right)}
$ 
evaluates, which is 
$
\left(
\begin{matrix}
v_1 \\
\cdots \\
v_N \\
\end{matrix}
\right)
$. 
This is also a $\left(M'-1\right)$-naming (as, if $M = M' + \frac{1}{2}$, then $M'$ is a half-integer, so $N$ is even, and so, $u_1 = t_{min}$ to make 
$
\sum \limits _{j = 1} ^{Q-1} {d_j
\left(
\begin{matrix}
u_j \\
\cdots \\
u_j^N
\end{matrix}
\right)}
$ 
an $\left(\frac{N+1}{2}\right)$-naming, and in a naming, any terms involving the point at $t_{min}$ are automatically the first terms of the sum). Thus, the required $\left(M'-1\right)$-naming exists.
\end{proof}
\begin{lem} [Naming Lemma]
\label{naminglem}
Every point in the convex hull of $C_N$ has an $\left(\frac{N+1}{2}\right)$-naming.
\end{lem}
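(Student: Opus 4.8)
The plan is to combine Carathéodory's Theorem with the two reduction lemmas already established, descending from a large naming down to the target size one point at a time. First I would invoke Carathéodory's Theorem: since the convex hull of $C_N$ lives in $\Re^N$, every point in it is a convex combination of at most $N+1$ points of $C_N$. Padding with terms of coefficient $0$ if fewer than $N+1$ points are used, and ordering the terms by increasing $t$, every point in the convex hull therefore has an $(N+1)$-naming. The remaining work is purely to drive the count $N+1$ down to $\frac{N+1}{2}$ using Lemma \ref{gendimreductionlem}, the Generalized Dimension Reduction Lemma, which lowers an $M'$-naming to an $(M'-1)$-naming whenever $M' - \frac{N+3}{2}$ is a nonnegative integer, its final step (at $M' = \frac{N+3}{2}$) being exactly the content of Lemma \ref{fulldimreductionlem}, the Full Dimension Reduction Lemma.

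The only real subtlety is parity bookkeeping, which is where I expect the argument to need care. Lemma \ref{gendimreductionlem} applies only when $M' - \frac{N+3}{2}$ is a \emph{nonnegative integer}, so the starting naming must share the ``integer-versus-half-integer'' type of $\frac{N+1}{2}$; otherwise the chain of single-step reductions would step over the target. When $N$ is odd, $\frac{N+1}{2}$ and $\frac{N+3}{2}$ are integers, and the $(N+1)$-naming produced above is already an integer naming with $N+1 - \frac{N+3}{2} = \frac{N-1}{2}$ a nonnegative integer. I would then apply Lemma \ref{gendimreductionlem} repeatedly for $M' = N+1, N, \ldots, \frac{N+3}{2}$, each step lowering the count by one and preserving the nonnegative-integer condition, until the final application at $M' = \frac{N+3}{2}$ yields an $\left(\frac{N+1}{2}\right)$-naming.

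When $N$ is even, $\frac{N+1}{2}$ is a half-integer, so Carathéodory's integer $(N+1)$-naming is of the wrong type and Lemma \ref{gendimreductionlem} does not apply to it directly. The fix is to manufacture a half-integer naming by prepending a term $0 \cdot C_N\left(t_{min}\right)$ at the front. Since $t_{min}$ is the least admissible parameter value and the new coefficient is $0$ (hence nonnegative), the result is a legitimate $(N+2)$-naming with $t_1 = t_{min}$, that is, an $\left(N + \frac{3}{2}\right)$-naming (this remains valid even if the original naming already used $t_{min}$, merely producing a reducible naming, which is permitted). Now $N + \frac{3}{2} - \frac{N+3}{2} = \frac{N}{2}$ is a nonnegative integer, so I would again apply Lemma \ref{gendimreductionlem} repeatedly, for $M' = N + \frac{3}{2}, N + \frac{1}{2}, \ldots, \frac{N+3}{2}$, descending through half-integer values exactly to an $\left(\frac{N+1}{2}\right)$-naming.

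In both cases the chain terminates precisely at $\frac{N+1}{2}$ because every step decreases the count by exactly one and the starting count shares the parity of $\frac{N+1}{2}$; one checks that $N+1 \geq \frac{N+3}{2}$ for all $N \geq 1$, so the chains are nonempty and well-defined. The main obstacle, then, is not any new geometric fact—all of that is carried by Lemma \ref{fulldimreductionlem} and Lemma \ref{gendimreductionlem}—but the careful verification that the nonnegative-integer hypothesis of Lemma \ref{gendimreductionlem} holds at every step, and that the even case is correctly seeded with the zero-coefficient $t_{min}$ term so that the resulting naming is genuinely a half-integer $\left(\frac{N+1}{2}\right)$-naming, equivalently one whose first point is $C_N\left(t_{min}\right)$, as the Theorem requires.
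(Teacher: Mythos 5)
Your proposal is correct and follows essentially the same route as the paper: start from some finite naming, prepend a zero-coefficient term at $t_{min}$ when $N$ is even to fix the parity, and then repeatedly apply Lemma \ref{gendimreductionlem} down to $\frac{N+1}{2}$. The only cosmetic difference is that you seed the argument with Carath\'eodory's bound of $N+1$ points (which lets you skip the paper's padding-up case for namings that start out too small), whereas the paper starts from an arbitrary $M$-naming.
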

\begin{proof}
Every point in the convex hull of $C_N$ has an $M$-naming for some positive integer $M$. This naming can be extended to an $\left(M + \frac{1}{2}\right)$-naming (of that point) by adding a term with coefficient zero and the point at $t_{min}$ at the beginning, to obtain a half-integer naming. Beginning with either this integer naming or this half-integer naming (to match whether $\frac{N+1}{2}$ is an integer or a half-integer), if it is an $M'$-naming with $M' > \frac{N+1}{2}$, then Lemma \ref{gendimreductionlem}, the Generalized Dimension Reduction Lemma, can be repeatedly used to, eventually, reduce the naming to an $\left(\frac{N+1}{2}\right)$-naming (still of the same point as before). If it is an $M'$-naming with $M' < \frac{N+1}{2}$, then it can be padded with extra terms with zero coefficients to become an $\left(\frac{N+1}{2}\right)$-naming (still of the same point as before).
\end{proof}
\begin{thme}
Every point in the convex hull of the curve that is the image of the parametric function $C_N: \left[t_{min}, t_{max}\right] \rightarrow \Re^N$, such that
$C_N\left(t\right) = 
\left(
\begin{matrix}
t \\
t^2 \\
t^3 \\
\cdots \\
t^N
\end{matrix}
\right)
$, can be represented as a convex combination of at most $\frac{N+1}{2}$ points on this curve if $N$ is odd, or as a convex combination of at most $\frac{N+2}{2}$ points on this curve if $N$ is even. Furthermore, if $N$ is even, one of these at most $\frac{N+2}{2}$ points on the curve can be required to be the point $C_N\left(t_{min}\right)$.
\end{thme}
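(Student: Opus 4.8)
The plan is to obtain the theorem as an essentially immediate consequence of Lemma \ref{naminglem}, the Naming Lemma, which already establishes that every point in the convex hull of $C_N$ has an $\left(\frac{N+1}{2}\right)$-naming. All of the substantive work — the critical-point analysis behind dimension reduction, the compactness and Lagrange-multiplier arguments, and the repeated application of the Generalized Dimension Reduction Lemma — has been carried out in the preceding lemmas, so the only remaining task is to translate the conclusion of the Naming Lemma back into the plain language of convex combinations, splitting into cases according to the parity of $N$.

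First I would recall that an $\left(\frac{N+1}{2}\right)$-naming is, by the definitions of naming and of $\left(M-\frac{1}{2}\right)$-naming, exactly a convex combination $\sum_j c_j C_N(t_j)$ of points on the curve that evaluates to the given point, with the additional constraint $t_1 = t_{min}$ imposed precisely when $\frac{N+1}{2}$ is a half-integer. Thus I need only count how many points such a naming uses and check the endpoint condition in each parity case. When $N$ is odd, $\frac{N+1}{2}$ is an integer, so an $\left(\frac{N+1}{2}\right)$-naming is an $M$-naming with $M = \frac{N+1}{2}$, which by definition uses $\frac{N+1}{2}$ points on the curve; this is a convex combination of at most $\frac{N+1}{2}$ points, which is the odd half of the statement. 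When $N$ is even, $\frac{N+1}{2}$ is a half-integer equal to $M - \frac{1}{2}$ with $M = \frac{N+2}{2}$, so by the definition of $\left(M-\frac{1}{2}\right)$-naming it is an $\frac{N+2}{2}$-naming with $t_1 = t_{min}$, hence a convex combination of at most $\frac{N+2}{2}$ points, the first of which is $C_N(t_{min})$. This supplies the even half of the statement together with the endpoint requirement.

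The one point demanding a little care is the phrase ``at most'': the Naming Lemma produces a naming with exactly $\frac{N+1}{2}$ terms, but some coefficients may be zero and some points may coincide, so I would remark that removing or merging such terms (as in Lemma \ref{dimreductionlem}, the Dimension Reduction Lemma) only decreases the count of distinct points on the curve, while in the even case the first term, fixed at $t_{min}$, is never removed by merging adjacent equal points. This justifies stating the bound as ``at most'' while retaining the clause that $C_N(t_{min})$ is among the points used.

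I do not expect any genuine obstacle in this closing argument, since it is purely a matter of unwinding the definitions of $M$-naming and $\left(M-\frac{1}{2}\right)$-naming and separating the two parities; the real difficulty of the paper was concentrated in Lemma \ref{nolocalextremalem}, the Lack of Local Extrema Lemma, where the Lagrange-multiplier system was shown to force a polynomial of degree at most $N$ to have more than $N$ roots and hence vanish identically, contradicting the condition $\frac{1}{c_{j'}}\frac{\partial t_s}{\partial t_{j'}} = \left(\frac{dP}{dz}\right)_{z = t_{j'}}$. Once that lemma is in hand, the chain through the Existence of Reducible Namings Lemma, the Full and Generalized Dimension Reduction Lemmas, and finally the Naming Lemma is already complete, and the theorem is merely its restatement in elementary terms.
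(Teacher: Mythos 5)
Your proposal is correct and follows the paper's own proof exactly: the theorem is deduced directly from Lemma \ref{naminglem}, the Naming Lemma, by splitting on the parity of $N$ and unwinding the definitions of $M$-naming and $\left(M-\frac{1}{2}\right)$-naming. Your extra remark justifying the phrase ``at most'' is a harmless elaboration the paper leaves implicit.
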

\begin{proof}
The $\left(\frac{N+1}{2}\right)$-naming required to exist by Lemma \ref{naminglem}, the Naming Lemma, is such. If $N$ is odd, then that is a convex combination of at most $\frac{N+1}{2}$ points on $C_N$. If $N$ is even, then this $\left(\frac{N+1}{2}\right)$-naming is an $\left(\frac{N+2}{2}\right)$-naming, whose first point is at $t_{min}$, so this naming is a convex combination of at most $\frac{N+2}{2}$, one of which is $C_N\left(t_{min}\right)$.
\end{proof}
\section{Uniqueness}
\label{uniquenesssection}
\begin{lem}[At Most One Non-Reducible Naming Lemma]
\label{atmostonenaminglemma}
Every point in the convex hull of $C_N$ has at most one non-reducible $M'$-naming, where $\frac{N+1}{2} - M'$ is a nonnegative integer. There is at most one such naming, in total, for all these possible values of $M'$.
\end{lem}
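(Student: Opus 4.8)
The plan is to distill the argument of Lemma \ref{unique2boundarylem}, the Uniqueness of the $2$-Boundary Non-reducible Naming Lemma, down to the single structural fact that powers it: any collection of at most $N+1$ pairwise distinct nodes, together with the constant row $1$, yields linearly independent columns $\left(1, s, s^2, \cdots, s^N\right)^T$, since the resulting $\left(N+1\right)$-by-$m$ matrix is a full-column-rank Vandermonde matrix whenever $m \leq N+1$. First I would observe that the two namings being compared are necessarily of the same kind: the hypothesis that $\frac{N+1}{2} - M'$ is a nonnegative integer forces every admissible $M'$ to be an integer when $N$ is odd and a half-integer when $N$ is even, so any two non-reducible namings under consideration are simultaneously integer namings (for $N$ odd) or simultaneously half-integer namings (for $N$ even), even if their values of $M'$ differ.

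Next I would suppose, for contradiction, that some point $v$ in the convex hull of $C_N$ has two distinct non-reducible namings, using $a$ and $b$ points respectively (where, for a half-integer naming, the point count includes the required $t_{min}$ point). Writing the condition that both namings evaluate to $v$ and that each set of coefficients sums to $1$ as a single vector equation, moving everything to one side, and grouping the terms by the distinct values of $t$ that occur, I obtain a relation of the form
\[
\sum \limits _{i = 1} ^{m} e_i \left(\begin{matrix} 1 \\ s_i \\ s_i^2 \\ \cdots \\ s_i^N \end{matrix}\right) = \left(\begin{matrix} 0 \\ 0 \\ 0 \\ \cdots \\ 0 \end{matrix}\right),
\]
where $s_1 < s_2 < \cdots < s_m$ are the distinct $t$-values appearing in either naming, and each $e_i$ is the total coefficient of $s_i$ in the first naming minus the total coefficient of $s_i$ in the second.

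The heart of the argument — and the step I expect to be the main obstacle — is the bound $m \leq N+1$, which is exactly what licenses the Vandermonde conclusion. Because each naming is non-reducible, its points are pairwise distinct, so it contributes exactly its point count to the tally, giving $m \leq a + b$. When $N$ is odd this already yields $m \leq 2 \cdot \frac{N+1}{2} = N+1$. When $N$ is even the raw bound is only $a + b \leq 2 \cdot \frac{N+2}{2} = N+2$, one too many; the saving observation is that both namings are half-integer namings and therefore both use the point at $t_{min}$, so $t_{min}$ appears once among the $s_i$ while being counted in both $a$ and $b$, giving $m \leq a + b - 1 \leq N+1$. With $m \leq N+1$ secured, the Vandermonde matrix above has full column rank, forcing $e_i = 0$ for every $i$.

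Finally I would translate $e_i = 0$ back into a statement about the two namings: for every value $s_i$, the total coefficient at $s_i$ agrees in both namings. I would then rule out the possibility that some $s_i$ occurs in only one of them. If it did, the coefficient at $s_i$ in that naming would be forced to equal $0$; for an integer naming (the case $N$ odd) this immediately makes it reducible, a contradiction. For a half-integer naming (the case $N$ even) a vanishing coefficient is tolerated only at the first, $t_{min}$, term — but both namings are half-integer namings and so both use $t_{min}$, meaning $t_{min}$ cannot be a value occurring in only one of them. Thus every value occurs in both namings with equal coefficients, and since both namings list their points in increasing order of $t$, they coincide term for term, contradicting the assumption that they were distinct. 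Hence a point in the convex hull of $C_N$ has at most one non-reducible naming across all admissible values of $M'$.
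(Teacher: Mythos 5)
Your proposal is correct and follows essentially the same route as the paper: subtract the two namings, collect the resulting relation into a Vandermonde system on the distinct $t$-values, bound the number of distinct nodes by $N+1$ (using the shared $t_{min}$ point to save one when $N$ is even), conclude all net coefficients vanish, and then observe that a node appearing in only one naming would force a zero coefficient and hence reducibility. Your net-coefficient formulation handles the paper's separate ``everything cancels'' case and its truncation to a square matrix a bit more cleanly, but the underlying argument is the same.
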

\begin{proof}
Suppose that there is a point 
$
\left(
\begin{matrix}
v_1 \\
\cdots \\
v_n \\
\end{matrix}
\right)
$ 
in the convex hull of $C_N$, for which there are at least two such namings $P_1$ and $P_2$, possibly with different choices of $M'$, so that $P_1$ is an $M'_1$-naming, and $P_2$ is an $M'_2$ naming.

Let $q$ be $1$ or $2$. Let $M_q$ be $M'_q$ or $M'_q + \frac{1}{2}$, whichever of these is an integer. That makes $P_q$ an $M_q$-naming, either because $M'_q$ is an integer and equals $M_q$, or because $M'_q$ is a half-integer, in which case, an $M'_q$-naming is an $M_q$-naming (as $M'_q + \frac{1}{2} = M_q$ in this case).

Let 
$
P_q = \sum \limits _{j = 1} ^{M_q} {c_{q, j}
\left(
\begin{matrix}
t_{q, j} \\
t_{q, j}^2 \\
t_{q, j}^3 \\
\cdots \\
t_{q, j}^N
\end{matrix}
\right)
}
$. 
If $N$ is odd, then, since $\frac{N+1}{2}-M'_q$ is a nonnegative integer, it follows that $\frac{N+1}{2}-M_q$ is a nonnegative integer, so $\frac{N+1}{2}-M_q \geq 0$, and so, $M_q \leq \frac{N+1}{2}$. If, instead, $N$ is even, then, since $\frac{N+1}{2}-M'_q$ is a nonnegative integer, it follows that $\frac{N+1}{2}-\left(M_q-\frac{1}{2}\right)$ is a nonnegative integer, so $\frac{N+1}{2}-\left(M_q-\frac{1}{2}\right) \geq 0$, and so, $M_q \leq \frac{N+2}{2}$. Additionally, if $N$ is even, then, again, since $\frac{N+1}{2}-M'_q$ is a nonnegative integer, $M'_q$ is a half-integer, so that means that $t_{q, 1} = t_{min}$. This is summarized in the following table, which holds for both $q = 1$ and $q = 2$:

\begin{tabular}{|c|c|c|c|c|}
\hline 
$N$ & $M'_q$ & Condition & Extra constraint \\ 
\hline 
odd & integer & $M_q \leq \frac{N+1}{2}$ & none \\ 
\hline 
even & half-integer & $M_q \leq \frac{N+2}{2}$ & $t_{q, 1} = t_{min}$ \\ 
\hline 
\end{tabular}

It follows that 
\[
\sum \limits _{j = 1} ^{M_1} {c_{1, j}
\left(
\begin{matrix}
1 \\
t_{1, j} \\
\cdots \\
t_{1, j}^N
\end{matrix}
\right)
}
=
\sum \limits _{j = 1} ^{M_2} {c_{2, j}
\left(
\begin{matrix}
1 \\
t_{2, j} \\
\cdots \\
t_{2, j}^N
\end{matrix}
\right)
}
\]
with the equality in the first components being because the coefficients of a naming sum to $1$, and with the equality in the other components being because $P_1$ and $P_2$ are namings of the same point. This equation can be simplified by moving the right side to the left side, and combining any like terms (ones with the same vector), and removing any resulting zero terms, leaving something of the form
\[
\sum \limits _{j = 1} ^{Q} {d_j
\left(
\begin{matrix}
1 \\
u_j \\
\cdots \\
u_j^N
\end{matrix}
\right)
}
=
\left(
\begin{matrix}
0 \\
0 \\
\cdots \\
0
\end{matrix}
\right)
\]
If this equation is $0 = 0$, then, as the $t_{1, j}$ are pairwise distinct and the $t_{2, j}$ are pairwise distinct (or else, one of $P_1$ and $P_2$ is reducible), the only simplifications possible are the merger of one term on the left side and one term on the right side. This means that both $P_1$ and $P_2$ are series of exactly the same terms (each 
$
c_{1, j}
\left(
\begin{matrix}
1 \\
t_{1, j} \\
\cdots \\
t_{1, j}^N
\end{matrix}
\right)
$
term cancelling out a 
$
c_{1, j'}
\left(
\begin{matrix}
1 \\
t_{1, j'} \\
\cdots \\
t_{1, j'}^N
\end{matrix}
\right)
$
term means that $c_j = c_j'$ and that $t_j = t_j'$), and they are in exactly the same order (strictly ascending, because the $t_{1, j}$ are pairwise distinct and the $t_{2, j}$ are pairwise distinct), so $P_1$ and $P_2$ are exactly the same naming. Thus, only the case that at least one term remains, which is the $Q \geq 1$ case, needs to be examined.

$Q$ is at most $M_1 + M_2$, which is at most $N+1$ if $N$ is odd, or at most $N+2$ if $N$ is even. However, if $N$ is even, then at least one pair of terms was merged, because $t_{1, 1} = t_{min} = t_{2, 1}$ in this case. That means that $Q \leq N+1$. Thus, the equation can be truncated to
\[
\sum \limits _{j = 1} ^{Q} {d_j
\left(
\begin{matrix}
1 \\
u_j \\
\cdots \\
u_j^{Q-1}
\end{matrix}
\right)
}
=
\left(
\begin{matrix}
0 \\
0 \\
\cdots \\
0
\end{matrix}
\right)
\]
This can be written in matrix form, as
\[
\left(
\begin{matrix}
1 & \cdots & 1 \\
u_1 & \cdots & u_Q \\
\cdots & \cdots & \cdots \\
u_1^{Q-1} & \cdots & u_Q^{Q-1} \\
\end{matrix}
\right)
\left(
\begin{matrix}
d_1 \\
\cdots \\
d_Q \\
\end{matrix}
\right)
= 
\left(
\begin{matrix}
0 \\
0 \\
\cdots \\
0 \\
\end{matrix}
\right)
\]
The matrix is a pseudo-Vandermonde matrix (in fact, a Vandermonde matrix), with its variables distinct (or further simplification coud have been made), so it is invertible. Thus, 
$
\left(
\begin{matrix}
d_1 \\
\cdots \\
d_Q \\
\end{matrix}
\right)
$ 
has only zero entries, but that cannot be true, because those terms would have been removed during simplification. Thus, since 
$
\left(
\begin{matrix}
d_1 \\
\cdots \\
d_Q \\
\end{matrix}
\right)
$ 
has all zero entries and has no zero entries, that vector has no entries at all, but that case was already eliminated. Thus, there is a contradiction, so $P_1$ and $P_2$ are the same naming.
\end{proof}
\begin{lem}[Nonreducibility of Low-Size Namings Lemma]
\label{nonreducibilitylowlemma}
Every $\frac{1}{2}$-naming of a point in the convex hull of $C_N$ is nonreducible, and every $1$-naming of a point in the convex hull of $C_N$ is nonreducible.
\end{lem}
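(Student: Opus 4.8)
The plan is to argue directly from the definition of \emph{reducible}, observing that both a $\frac{1}{2}$-naming and a $1$-naming consist of a single term, so neither of the two mechanisms by which a naming can be reducible is available.

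First I would unpack the two sizes. A $1$-naming uses exactly one point, so it has the form $c_1 C_N\left(t_1\right)$ with a single coefficient $c_1$ and a single parameter $t_1$. A $\frac{1}{2}$-naming is, by definition, a $1$-naming whose single point lies at $t_{min}$ (that is, $t_1 = t_{min}$); it too consists of the single term $c_1 C_N\left(t_1\right)$. In either case there is only one term in the sum.

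Next I would dispose of the first reducibility condition, namely that two adjacent terms use the same point. With only one term present there is no pair of adjacent terms at all, so this condition fails vacuously for both kinds of naming. Then I would dispose of the second condition, that some term has coefficient $0$. Because the coefficients of any naming sum to $1$ and there is only one coefficient here, that coefficient equals $1$, which is nonzero. For the $1$-naming this at once rules out a zero coefficient. For the $\frac{1}{2}$-naming the definition of reducibility for half-integer namings explicitly exempts the first coefficient (the remark that $c_1 = 0$ does not suffice to make such a naming reducible), but in any case $c_1 = 1$ here and there is no other term to carry a zero coefficient. Hence neither condition holds, and both namings are nonreducible.

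There is no genuine obstacle in this argument; the only point requiring care, rather than difficulty, is to match the two slightly different reducibility definitions (integer versus half-integer) to the two cases, and to note that the lone surviving coefficient is forced to equal $1$, so that no zero-coefficient term can arise regardless of which definition applies.
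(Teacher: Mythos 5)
Your proposal is correct and follows essentially the same route as the paper's proof: reduce the $\frac{1}{2}$-naming case to the $1$-naming case, note that the single coefficient must equal $1$ (hence is nonzero), and observe that with only one term there can be no pair of adjacent terms using the same point. The extra remark about the half-integer exemption for $c_1=0$ is harmless but not needed once $c_1=1$ is established.
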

\begin{proof}
Every $\frac{1}{2}$-naming of a point is a $1$-naming of that point. The single coefficient, which is the sum of the coefficients, must be $1$, which is not $0$. There are no two adjacent terms, so there can be no two adjacent terms with equal $t$s. Thus, the naming is not reducible.
\end{proof}
\begin{lem}[Unique Non-Reducible Naming Lemma]
\label{uniquenaminglemma}
Every point in the convex hull of $C_N$ has at most one non-reducible $M'$-naming, where $\frac{N+1}{2} - M'$ is a nonnegative integer. There is exactly one such naming, in total, for all these possible values of $M'$.
\end{lem}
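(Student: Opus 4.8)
The plan is to split ``exactly one'' into its two halves. The ``at most one'' half is already handed to us: Lemma \ref{atmostonenaminglemma}, the At Most One Non-Reducible Naming Lemma, asserts precisely that every point in the convex hull of $C_N$ has at most one non-reducible $M'$-naming across all admissible $M'$ (those with $\frac{N+1}{2} - M'$ a nonnegative integer). So the only genuinely new content of this lemma is \emph{existence}: every point must have at least one such non-reducible naming. Once existence is in hand, ``at most one'' together with ``at least one'' gives ``exactly one'', and the proof is complete.

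To prove existence, I would run a descent argument anchored by the Naming Lemma. First I would invoke Lemma \ref{naminglem}, the Naming Lemma, to obtain an $\left(\frac{N+1}{2}\right)$-naming $P_0$ of the given point; note that $M' = \frac{N+1}{2}$ satisfies $\frac{N+1}{2} - M' = 0$, a nonnegative integer, so $P_0$ is admissible. Then I would iterate: at each stage, if the current naming is non-reducible, I stop; otherwise I apply Lemma \ref{dimreductionlem}, the Dimension Reduction Lemma, which converts a reducible $M'$-naming of the point into an $\left(M'-1\right)$-naming of the same point. Each step lowers $M'$ by exactly $1$, so $\frac{N+1}{2} - M'$ rises by $1$ and stays a nonnegative integer throughout; the size parameter therefore marches through exactly the admissible values $\frac{N+1}{2}, \frac{N-1}{2}, \cdots$, respecting the integer-versus-half-integer parity determined by $N$ (integer values $\frac{N+1}{2}, \cdots, 2, 1$ when $N$ is odd, half-integer values $\frac{N+1}{2}, \cdots, \frac{3}{2}, \frac{1}{2}$ when $N$ is even).

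For termination I would appeal to Lemma \ref{nonreducibilitylowlemma}, the Nonreducibility of Low-Size Namings Lemma: every $1$-naming and every $\frac{1}{2}$-naming is automatically non-reducible. Hence the descent cannot pass below $M' = 1$ (for $N$ odd) or $M' = \frac{1}{2}$ (for $N$ even), because upon reaching that value the naming is non-reducible and the process halts. Since $M'$ is bounded below and strictly decreases while reductions are still possible, the descent must stop after finitely many steps at a non-reducible $M'$-naming with $\frac{N+1}{2} - M'$ a nonnegative integer. This supplies the required existence, and combined with Lemma \ref{atmostonenaminglemma} yields that the non-reducible naming is unique, i.e. there is exactly one.

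The step I expect to require the most care is the bookkeeping that keeps each intermediate object inside the admissible family, rather than any deep argument: I must check that every reduction produced by Lemma \ref{dimreductionlem} is a bona fide naming (a convex combination listed in increasing order of $t$) and that the parity of $M'$ is preserved so that the stopping sizes $1$ and $\frac{1}{2}$ are reached exactly, not skipped. Both points are already guaranteed by the cited lemmas, so the argument is essentially a clean concatenation of existing results; no new computation beyond this parity tracking should be needed.
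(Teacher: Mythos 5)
Your proposal is correct and follows essentially the same route as the paper: the paper also obtains existence by starting from the $\left(\frac{N+1}{2}\right)$-naming given by Lemma \ref{naminglem} and invoking Lemma \ref{dimreductionlem} together with Lemma \ref{nonreducibilitylowlemma}, then gets uniqueness from Lemma \ref{atmostonenaminglemma}. The only cosmetic difference is that the paper phrases the descent as choosing the minimal admissible $M'$ for which a naming exists and deriving a contradiction from reducibility, whereas you phrase it as an explicit terminating reduction process; these are logically equivalent.
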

\begin{proof}
Let 
$
\left(
\begin{matrix}
v_1 \\
\cdots \\
v_N\\
\end{matrix}
\right)
$ 
be a point in the convex hull of $C_N$. By Lemma \ref{naminglem}, the Naming Lemma, this point has an $\left(\frac{N+1}{2}\right)$-naming. Then, let $M'$ be a positive integer or a positive half-integer, for which $\frac{N+1}{2} - M'$ is a nonnegative integer, and for which, there is at least one $M'$-naming of 
$
\left(
\begin{matrix}
v_1 \\
\cdots \\
v_N\\
\end{matrix}
\right)
$. 
($M'$ exists, because
$
\left(
\begin{matrix}
v_1 \\
\cdots \\
v_N\\
\end{matrix}
\right)
$ 
has an $\left(\frac{N+1}{2}\right)$-naming, and $\frac{N+1}{2} - \frac{N+1}{2} = 0$, which is a nonnegative integer.) If there are multiple possible values for $M'$, let $M'$ take the lowest possible value. Let $P$ be an $M'$-naming of 
$
\left(
\begin{matrix}
v_1 \\
\cdots \\
v_N\\
\end{matrix}
\right)
$. 
$P$ is non-reducible, either because $M' = \frac{1}{2}$ or $M' = 1$, in which case Lemma \ref{nonreducibilitylowlemma}, the Nonreducibility of Low-Size Namings Lemma, applies, or otherwise, because if $P$ were reducible, then, by Lemma \ref{dimreductionlem}, the Dimension Reduction Lemma, 
$
\left(
\begin{matrix}
v_1 \\
\cdots \\
v_N\\
\end{matrix}
\right)
$ 
would have an $\left(M' - 1\right)$-naming, and, furthermore, $\frac{N+1}{2} - \left(M' - 1\right)$ would still be a nonnegative integer, meaning that $M'$ was not set at its lowest possible value (which contradicts the stipulation that $M'$ be set at its lowest possible value).

That provides a nonreducible $M'$-naming, with $\frac{N+1}{2} - M'$ a nonnegative integer. By Lemma \ref{atmostonenaminglemma}, the At Most One Non-Reducible Naming Lemma, it is the only one, for all such values of $M'$, combined.
\end{proof}
\begin{defn}
Let $N$ be a positive integer, and let $M'$ be a positive integer or a positive half-integer. An $M'$-naming of a point in the convex hull of $C_N$ is a \emph{correct-parity-for-$N$} naming if $M' - \frac{N+1}{2}$ is an integer. $M'$ is itself also \emph{of the correct parity for $N$} if $M' - \frac{N+1}{2}$ is an integer.
\end{defn}
\begin{rmk}
By this definition, if $M'$ is of the correct parity for $N$, then an $M'$-naming of a point in the convex hull of $C_N$ is a \emph{correct-parity-for-$N$} naming. Also by this definition, a correct-parity-for-$N$ naming of a point in the convex hull of $C_N$ is an $M'$-naming for some value of $M'$ of the correct parity for $N$.
\end{rmk}
\begin{defn}
A zero-addition of an $M'$-naming of a point in the convex hull of $C_N$, with $M'$ of the correct parity for $N$, is the same naming with an extra term with coefficient zero.
\end{defn}
\begin{defn}
An equal-split of an $M'$-naming of a point in the convex hull of $C_N$, with $M'$ of the correct parity for $N$, is the same naming with an one of its terms split into two terms that use the same point on $C_N$, and whose coefficients sum to the coefficient of the term that was split.
\end{defn}
\begin{defn}
A direct simplification of an $M'$-naming of a point in the convex hull of $C_N$, with $M'$ of the correct parity for $N$, is the same naming, except either with a single term with coefficient zero removed, or with two terms using the same point on $C_N$ merged into a single term; in other words, the other naming is a zero-addition or an equal-split of the original naming.
\end{defn}
\begin{defn}[Definition of Direct Equivalence]
\label{directequivalencedef}
Two correct-parity namings of points in the convex hull of $C_N$ are directly equivalent, if and only if, (at least) one of them is a direct simplification of the other.
\end{defn}
\begin{defn}[Definition of Equivalence]
\label{equivalencedef}
Two correct-parity-for-$N$ namings $P_1$ and $P_2$ are equivalent, if there is a sequence of correct-parity-for-$N$ namings $P_1, P'_1, P'_2, \cdots, P'_A, P_2$, such that any two adjacent entries in the sequence are directly equivalent to each other.
\end{defn}
\begin{lem}
If $N$ is a positive integer, then, equivalence, as defined in Definition \ref{equivalencedef}, the Definition of Equivalence, is, in fact, an equivalence relation on the set of correct-parity-for-$N$ namings of points in the convex hull of $C_N$. 
\end{lem}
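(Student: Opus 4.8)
The plan is to verify directly that equivalence satisfies the three defining properties of an equivalence relation: reflexivity, symmetry, and transitivity. Throughout, I will rely on the fact that direct equivalence (Definition \ref{directequivalencedef}) is itself \emph{symmetric} by construction, since ``$P_1$ is directly equivalent to $P_2$'' is defined as ``at least one of $P_1, P_2$ is a direct simplification of the other,'' a statement unchanged by swapping the two namings. The only genuine subtlety is that direct equivalence is \emph{not} reflexive: a direct simplification strictly decreases the number of terms, by removing a term with coefficient zero or by merging two terms that use the same point, so no naming is a direct simplification of itself.

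For reflexivity I therefore cannot use the trivial one-step chain $P, P$. Instead I would take any correct-parity-for-$N$ naming $P$, form its zero-addition $Q$ by appending one term with coefficient zero, and exhibit the chain $P, Q, P$. Here $P$ is a direct simplification of $Q$ (remove the added zero term), so $P$ and $Q$ are directly equivalent, and likewise $Q$ and $P$; hence by Definition \ref{equivalencedef} the naming $P$ is equivalent to itself. The one point requiring care is that $Q$ must itself be a correct-parity-for-$N$ naming so that it is a legitimate intermediate entry of the chain; this holds because a zero-addition increases $M'$ by exactly $1$, whether $N$ is odd (so that $M'$ is an integer) or $N$ is even (so that $M'$ is a half-integer carrying a fixed term at $t_{min}$), and increasing $M'$ by an integer preserves the condition that $M' - \frac{N+1}{2}$ be an integer.

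Symmetry and transitivity are then formal. For symmetry, given a chain $P_1, P'_1, \dots, P'_A, P_2$ of correct-parity-for-$N$ namings witnessing the equivalence of $P_1$ and $P_2$, I would reverse it to $P_2, P'_A, \dots, P'_1, P_1$; each adjacent pair is still directly equivalent, because direct equivalence is symmetric, so this reversed chain witnesses the equivalence of $P_2$ and $P_1$. For transitivity, given chains witnessing that $P_1$ is equivalent to $P_2$ and that $P_2$ is equivalent to $P_3$, I would concatenate them at their shared endpoint $P_2$, deleting the duplicated occurrence, to obtain a single chain from $P_1$ to $P_3$ all of whose adjacent pairs are directly equivalent, witnessing the equivalence of $P_1$ and $P_3$.

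I expect the main obstacle, and really the only non-mechanical step, to be reflexivity, precisely because direct equivalence fails to be reflexive and so the witnessing chain must be constructed nontrivially through a zero-addition; the accompanying verification that this zero-addition preserves correct parity for $N$ is the detail most easily overlooked.
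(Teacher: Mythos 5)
Your proof is correct, and for symmetry and transitivity it is essentially identical to the paper's: reverse the witnessing chain using the built-in symmetry of direct equivalence, and concatenate two chains at the shared endpoint $P_2$. The one place you genuinely diverge is reflexivity. The paper simply takes the degenerate sequence consisting of the single entry $P_1$ and observes that the condition ``any two adjacent entries are directly equivalent'' holds vacuously, since there are no adjacent pairs. You instead read the definition as requiring a chain with at least two entries, correctly observe that direct equivalence is not reflexive (a direct simplification strictly decreases the number of terms), and route around this by passing through a zero-addition $Q$ of $P$, checking that $Q$ remains correct-parity-for-$N$. Both readings of Definition \ref{equivalencedef} are defensible, and your construction is the more robust one: it works under either reading, whereas the paper's one-element sequence depends on accepting the degenerate case. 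The only detail you leave implicit is that the appended zero-coefficient term must be placed so that the $t$-values remain in increasing order (e.g.\ appended at $t_{max}$), which is easily arranged; with that noted, your argument is complete.
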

\begin{proof}
It suffices to show that equivalence is a reflexive, symmetric, and transitive relation. Let $P_1$, $P_2$, and $P_3$ be correct-parity-for-$N$ namings of points in the convex hull of $C_N$.

$P_1$ is equivalent to $P_1$, because there is a sequence of correct-parity-for-$N$ namings, $P_1$, such that any two adjacent entries in the sequence are directly equivalent to each other, there being no pairs of adjacent entries in this one-naming sequence.

If $P_1$ is equivalent to $P_2$, there exists a sequence of correct-parity-for-$N$ namings $P_1, P'_1, P'_2, \cdots, P'_A, P_2$, such that any two adjacent entries in the sequence are directly equivalent to each other. The same also holds for the reversed sequence $P_2, P'_A, P'_{A-1}, \cdots, P'_1, P_1$, by symmetry of direct equivalence (which was shown in the remark after Definition \ref{directequivalencedef}, the Definition of Direct Equivalence). That makes $P_2$ equivalent to $P_1$.

If $P_1$ is equivalent to $P_2$, and $P_2$ is equivalent to $P_3$, then there exist sequences of correct-parity-for-$N$ namings $P_1, P'_1, P'_2, \cdots, P'_A, P_2$ and $P_2, P'_{A+1}, P'_{A+2}, \cdots, P'_B, P_3$, such that, in both seqences, any two adjacent entries are directly equivalent to each other. For the joined sequence $P_1, P'_1, P'_2, \cdots, P'_A, P_2, P'_{A+1}, P'_{A+2}, \cdots, P'_B, P_3$, any two adjacent entries in the sequence are directly equivalent to each other. ($P_2$ and $P'_A$ are directly equivalent to each other, because $P'_A, P_2$ is part of the sequence showing that $P_1$ is equivalent to $P_2$, while $P_2$ and $P_{A+1}$ are directly equivalent to each other, because $P_2, P'_{A+1}$ is part of the sequence showing that $P_2$ is equivalent to $P_3$.) Thus, $P_1$ is equivalent to $P_3$.

Therefore, equivalence is, in fact, an equivalence relation on the set of correct-parity-for-$N$ namings of points in the convex hull of $C_N$. 
\end{proof}
\begin{lem}[Lemma on Directly-Equivalent Namings]
\label{direquivlemma}
If $N$ is a positive integer, then two directly-equivalent, correct-parity-for-$N$, namings of points in the convex hull of $C_N$ are namings of the same point.
\end{lem}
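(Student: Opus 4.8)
The plan is to unwind the definition of direct equivalence and verify that each elementary operation relating two directly-equivalent namings preserves the evaluated point. By Definition \ref{directequivalencedef}, the Definition of Direct Equivalence, if two correct-parity-for-$N$ namings $P$ and $P'$ are directly equivalent, then one of them is a direct simplification of the other. By the definition of direct simplification, this means that one naming is obtained from the other either by removing a single term whose coefficient is zero (equivalently, the other is a zero-addition), or by merging two terms that use the same point on $C_N$ into one (equivalently, the other is an equal-split). Since the goal is only to show that $P$ and $P'$ name the same point, and since ``naming the same point'' is a symmetric relation, it suffices to check that a zero-addition and an equal-split each leave the value of the convex combination unchanged; the inverse operations (zero-removal and merge) then do as well.

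First I would dispose of the zero-addition. The added term has the form $0 \cdot C_N(t)$ for some $t \in \left[t_{min}, t_{max}\right]$, which contributes the $N$-dimensional zero vector to the sum, so the evaluated point is unchanged. Next I would handle the equal-split, where a single term $c\, C_N(t)$ is replaced by two terms $c_1\, C_N(t) + c_2\, C_N(t)$ that use the identical point $C_N(t)$ and satisfy $c_1 + c_2 = c$. Since both new terms use the same vector, their contribution is $c_1 C_N(t) + c_2 C_N(t) = \left(c_1 + c_2\right) C_N(t) = c\, C_N(t)$, so the value of the whole convex combination is again unchanged.

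Combining the two cases, a direct simplification never alters the evaluated point, and therefore two directly-equivalent namings evaluate to the same point, i.e., they are namings of the same point. I do not expect any real obstacle here: the only subtlety is that the definition of direct simplification bundles two distinct operations, so the argument must treat both the coefficient-zero case and the shared-point case separately. Each, however, reduces to the elementary linear-algebra observation that adjoining the zero vector changes nothing and that $c_1 v + c_2 v = \left(c_1 + c_2\right) v$, so the verification is purely mechanical once the definitions are expanded.
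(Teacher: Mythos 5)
Your proposal is correct and follows essentially the same route as the paper's proof: reduce direct equivalence to a zero-addition or an equal-split, and observe that adjoining the zero vector or splitting $c\,C_N(t)$ into $c_1\,C_N(t)+c_2\,C_N(t)$ with $c_1+c_2=c$ leaves the evaluated sum unchanged. Your write-up simply spells out these two elementary computations in more detail than the paper does.
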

\begin{proof}
For two directly-equivalent namings, correct-parity-for-$N$, namings of points in the convex hull of $C_N$, one of them is a direct simplification of the other, and therefore, one is a zero-addition or an equal-split of the other. Neither adding a zero term, nor splitting a term into two, so that the two coefficients sum to the original coefficient, affects the result of the naming when evaluated. Thus, the two namings are namings of the same point.
\end{proof}
\begin{lem}[Lemma on Equivalent Namings]
\label{equivlemma}
If $N$ is a positive integer, then two equivalent, correct-parity-for-$N$, namings of points in the convex hull of $C_N$ are namings of the same point.
\end{lem}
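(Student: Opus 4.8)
The plan is to deduce this directly from Lemma \ref{direquivlemma}, the Lemma on Directly-Equivalent Namings, by chaining it along the very sequence that witnesses equivalence. First I would invoke Definition \ref{equivalencedef}, the Definition of Equivalence: if $P_1$ and $P_2$ are equivalent, correct-parity-for-$N$ namings, then there is a finite sequence of correct-parity-for-$N$ namings $P_1, P'_1, P'_2, \cdots, P'_A, P_2$ in which any two adjacent entries are directly equivalent to each other. For notational convenience I would relabel this sequence as $R_0, R_1, \cdots, R_{A+1}$, with $R_0 = P_1$ and $R_{A+1} = P_2$, so that $R_k$ and $R_{k+1}$ are directly equivalent for every $k \in \{0, 1, \cdots, A\}$.

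Next, I would apply Lemma \ref{direquivlemma} to each adjacent pair: since $R_k$ and $R_{k+1}$ are directly-equivalent, correct-parity-for-$N$, namings of points in the convex hull of $C_N$, they are namings of the same point. Thus the point named by $R_k$ equals the point named by $R_{k+1}$ for each such $k$.

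Finally, I would conclude by transitivity of equality, equivalently by a short induction on $A$: the point named by $R_0$ equals the point named by $R_1$, which equals the point named by $R_2$, and so on up the chain to $R_{A+1}$. Hence the point named by $P_1 = R_0$ equals the point named by $P_2 = R_{A+1}$, so $P_1$ and $P_2$ are namings of the same point, as claimed.

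As for the main obstacle, there is none of substance here: the genuine content has already been isolated in Lemma \ref{direquivlemma}, and the present lemma is just its propagation along the equivalence chain. The only matters requiring any care are purely bookkeeping, namely confirming that every intermediate naming $R_k$ is itself a naming of some point in the convex hull of $C_N$ (which holds because, by hypothesis, every entry of the witnessing sequence is a correct-parity-for-$N$ naming of such a point, so Lemma \ref{direquivlemma} genuinely applies to each adjacent pair), and observing that the sequence is finite so that the induction terminates.
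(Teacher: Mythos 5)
Your proposal is correct and follows essentially the same route as the paper: extract the witnessing chain from Definition \ref{equivalencedef}, apply Lemma \ref{direquivlemma} to each adjacent pair, and propagate equality of the named point along the (finite) chain. The paper merely phrases the induction as a case split on the length of the sequence, which is a cosmetic difference.
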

\begin{proof}
Let $P_1$ be correct-parity-for-$N$ a naming of a point
$
\left(
\begin{matrix}
v_{1, 1} \\
\cdots \\
v_{1, N} \\
\end{matrix}
\right)
$ 
in the convex hull of $C_N$, and let $P_2$ be a correct-parity-for-$N$ naming of another point in the convex hull of $C_N$, such that $P_1$ and $P_2$ are equivalent. Therefore, there is a sequence of correct-parity-for-$N$ namings $P_1, P'_1, P'_2, \cdots, P'_A, P_2$, such that any two adjacent entries in the sequence are directly equivalent to each other. 

If this sequence has only one entry, then $P_1$ and $P_2$ are the same naming, and therefore are namings of the same point. If this sequence has only two entries, then $P_1$ and $P_2$ are directly equivalent, and therefore, by Lemma \ref{direquivlemma}, the Lemma on Directly-Equivalent Namings, are namings of the same point. 

If this sequence has three or more entries, then any two adjacent entries in the sequence are directly equivalent, and therefore, by Lemma \ref{direquivlemma}, are namings of the same point. Thus, $P_1$ and $P'_1$ are namings of the same point, as are $P'_1$ and $P'_2$, and so on, and as are $P'_A$ and $P_2$, so all the namings in the sequence are namings of the same point. In particular, $P_1$ and $P_2$ are namings of the same point.
\end{proof}

\begin{defn}
Let $N$ be a positive integer, and let $M'$ be a positive integer or a positive half-integer. An $M'$-naming of a point in the convex hull of $C_N$ is a \emph{proper-for-$N$} naming if it is a correct-parity-for-$N$ naming and if $M' \leq \frac{N+1}{2}$.
\end{defn}
\begin{rmk}
This definition is motivated by Lemma \ref{uniquenaminglemma}, the Unique Non-Reducible Naming Lemma, which, in these terms, states that every point in the convex hull of $C_N$ has exactly one proper-for-$N$, non-reducible, naming.
\end{rmk}

\begin{defn}
Let $N$ be a positive integer, and let $P$ be a proper-for-$N$ naming of a point in the convex hull of $C_N$. Then, a \emph{canonical form} of $P$ is a proper-for-$N$, non-reducible, naming that is equivalent to $P$.
\end{defn}

\begin{lem}[Reduction of Reducible Namings Lemma]
\label{reductionreduciblelemma}
Every reducible, proper-for-$N$, naming of a point in the convex hull of $C_N$, is either a zero-addition or an equal-split of a proper-for-$N$ naming of a point in the convex hull of $C_N$.
\end{lem}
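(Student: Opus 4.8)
The plan is to directly undo whichever reduction witnesses the reducibility of the given naming, so this is essentially the definitional converse of Lemma \ref{dimreductionlem}, the Dimension Reduction Lemma. Let $P$ be a reducible, proper-for-$N$, naming of a point in the convex hull of $C_N$, and let $M$ be the number of points it uses. By the definition of reducibility, $P$ either (a) has two adjacent terms that use the same point on $C_N$, or (b) has a term with coefficient $0$; and in the half-integer case this zero-coefficient term is not the first one, since $c_1 = 0$ alone does not make a half-integer naming reducible. I note at the outset that Lemma \ref{nonreducibilitylowlemma}, the Nonreducibility of Low-Size Namings Lemma, guarantees $P$ uses at least two points (a $\frac{1}{2}$-naming and a $1$-naming are never reducible), so the naming $P'$ constructed below will never be empty.

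In case (a) I would form $P'$ by merging the two offending adjacent terms into a single term at their common point, with coefficient equal to the sum of the two original coefficients; then $P$ is exactly an equal-split of $P'$, recovered by splitting that term back into the original two. In case (b) I would form $P'$ by deleting the zero-coefficient term; then $P$ is exactly a zero-addition of $P'$, recovered by reinserting a zero-coefficient term at the original point and position. In either case $P'$ uses one fewer point than $P$.

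The bulk of the work is then verifying that $P'$ is a genuine proper-for-$N$ naming. First, $P'$ is a valid naming: its coefficients remain nonnegative and still sum to $1$ (merging preserves the sum, and deleting a zero term changes nothing), and its terms are still in increasing order of $t$ (merging only collapses two adjacent equal $t$-values and deletion never reorders). If $P$ is a half-integer naming I must also confirm that $P'$ retains $t_1 = t_{min}$: in case (b) this is automatic, because the deleted term is not the first, and in case (a) it holds because merging leaves the first term's point unchanged — even if the first two terms are the ones merged, the surviving term still sits at $t_{min}$. Second, $P'$ has the correct parity: passing from $P$ to $P'$ lowers the size $M'$ by exactly $1$, whether $P$ is of integer or half-integer size (in the integer case $M'$ goes from $M$ to $M-1$; in the half-integer case from $M-\frac{1}{2}$ to $M-\frac{3}{2}$), and since $P$ is of correct parity so is $P'$. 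Third, $P'$ is proper: since $P$ is proper we have $M'(P) \leq \frac{N+1}{2}$, hence $M'(P') = M'(P) - 1 < \frac{N+1}{2}$.

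I do not expect a serious obstacle, since nothing here requires the analytic machinery of the earlier lemmas; the only points demanding care are the bookkeeping of the size $M'$ across the integer/half-integer distinction and the preservation of the constraint $t_1 = t_{min}$ in the half-integer case, which is exactly where the special treatment of the first term (whose vanishing coefficient does not count as reducibility) must be invoked so that the deleted term in case (b) is never the first term.
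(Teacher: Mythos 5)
Your proposal is correct and follows essentially the same route as the paper's proof: undo the witnessing reduction (merge the two adjacent equal-$t$ terms or delete the non-first zero-coefficient term), then check that the result is still a correct-parity, proper-for-$N$ naming with $M'$ decreased by exactly $1$. Your version is merely more explicit about the bookkeeping (non-emptiness via Lemma \ref{nonreducibilitylowlemma}, preservation of ordering and of $t_1 = t_{min}$), all of which the paper leaves implicit.
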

\begin{rmk}
Both namings are namings of the same point, by Lemma \ref{equivlemma}, the Lemma on Equivalent Namings.
\end{rmk}
\begin{proof}
If a proper-for-$N$ naming of a point in $C_N$ is reducible, then either one of its terms (other than the first one if $N$ is even) has coefficient zero, or two of its adjacent terms use the same point. Remove the zero term, or merge those two adjacent terms. The original naming is either a zero-addition or an equal-split of the new naming (and the new naming is of the correct parity for $N$, because, if $N$ is even, then the first term was not the one removed). The new naming is proper for $N$, because if the original naming was an $M'$-naming, then the new naming is an $\left(M' - 1\right)$-naming (and because $M' - 1 \leq \frac{N+1}{2}$ follows from $M' \leq \frac{N+1}{2}$). 
\end{proof}

\begin{lem} [Unique Canonical Form Lemma]
\label{uniquecanonicallemma}
Every proper-for-$N$ naming of a point in the convex hull of $C_N$ has exactly one canonical form.
\end{lem}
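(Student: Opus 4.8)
The plan is to prove the two halves of the statement—that every proper-for-$N$ naming $P$ has \emph{at least} one canonical form, and that it has \emph{at most} one—by drawing entirely on the lemmas already established. Neither half requires new analytic content; the substantive work was done in Lemma \ref{uniquenaminglemma}, the Unique Non-Reducible Naming Lemma, and in Lemma \ref{reductionreduciblelemma}, the Reduction of Reducible Namings Lemma. Recall that a canonical form of $P$ is, by definition, a proper-for-$N$, non-reducible naming that is equivalent to $P$.

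For existence, I would proceed by iterated reduction. If $P$ is already non-reducible, then $P$ is its own canonical form, since it is proper-for-$N$, non-reducible, and equivalent to itself. If $P$ is reducible, apply Lemma \ref{reductionreduciblelemma}: $P$ is then a zero-addition or an equal-split of some proper-for-$N$ naming $P_1$, so $P_1$ is a direct simplification of $P$ and hence $P$ and $P_1$ are directly equivalent, with $P_1$ using exactly one fewer point. Iterating produces a sequence $P, P_1, P_2, \dots$ of proper-for-$N$ namings in which each is directly equivalent to the next and the size $M'$ strictly decreases by $1$ at each step while retaining the correct parity for $N$. Because $M'$ is bounded below—Lemma \ref{nonreducibilitylowlemma}, the Nonreducibility of Low-Size Namings Lemma, guarantees that the smallest namings are already non-reducible, so the descent cannot continue indefinitely—the process terminates at a non-reducible proper-for-$N$ naming $Q$. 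Chaining the direct equivalences and using transitivity of equivalence shows $Q$ is equivalent to $P$, so $Q$ is a canonical form.

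For uniqueness, suppose $Q_1$ and $Q_2$ are both canonical forms of $P$. Each is equivalent to $P$, so by symmetry and transitivity of the equivalence relation $Q_1$ is equivalent to $Q_2$. By Lemma \ref{equivlemma}, the Lemma on Equivalent Namings, $Q_1$ and $Q_2$ are then namings of the same point. But both are proper-for-$N$ and non-reducible, and Lemma \ref{uniquenaminglemma} guarantees that a given point in the convex hull of $C_N$ has exactly one such naming across all admissible values of $M'$; therefore $Q_1 = Q_2$, giving uniqueness.

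The main obstacle—such as it is—lies in the bookkeeping of the existence argument rather than in any deep step: I must verify that each reduction step preserves both the correct-parity-for-$N$ condition and the inequality $M' \leq \frac{N+1}{2}$ that defines properness (both handled inside Lemma \ref{reductionreduciblelemma}), and that the strictly decreasing sizes cannot descend past a non-reducible naming. Once the termination is justified, the two conclusions follow immediately from the uniqueness of non-reducible namings, which is where the real content of the argument has already been spent.
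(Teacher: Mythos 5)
Your proposal is correct and follows essentially the same route as the paper: existence by iterated application of Lemma \ref{reductionreduciblelemma} until a non-reducible naming is reached (terminating because the number of terms strictly decreases), and uniqueness by combining Lemma \ref{equivlemma} with the at-most-one-non-reducible-naming result. The only cosmetic difference is that you cite Lemma \ref{uniquenaminglemma} where the paper cites Lemma \ref{atmostonenaminglemma} directly, and you add an (unneeded but harmless) appeal to Lemma \ref{nonreducibilitylowlemma} to justify termination.
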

\begin{proof}
If a naming has two distinct canonical forms, then that naming is equivalent to both canonical forms, which are thus equivalent to each other. Therefore, by Lemma \ref{equivlemma}, the Lemma on Equivalent Namings, both canonical forms are namings of the same point. Both namings are proper-for-$N$ namings and both namings are namings of the same point, and therefore, by Lemma \ref{atmostonenaminglemma}, the At Most One Non-Reducible Naming Lemma, they are the same naming. That shows that every proper-for-$N$ naming of a point in the convex hull of $C_N$ has at most one canonical form.

If a proper-for-$N$ naming $P$ is not reducible, then it is its own canonical form. Otherwise, it Lemma \ref{reductionreduciblelemma}, Reduction of Reducible Namings Lemma, states that $P$ is either a zero-addition of some proper-for-$N$ naming $P'_1$, which, if reducible, is itself either a zero-addition of some proper-for-$N$ naming $P'_2$, and so on until a non-reducible naming is reached (this does not go on forever, as each step reduces the number of terms in the naming by $1$. Each step preserves equivalence to the original naming $P$, so the last, non-reducible, naming is a canonical form of $P$. Thus, every proper-for-$N$ naming has at least one canonical form.

Therefore, every proper-for-$N$ naming of a point in the convex hull of $C_N$ has exactly one canonical form.
\end{proof}
\begin{rmk}
The preceding lemma means that it is unambiguous to mention \emph{the} canonical form of a proper-for-$N$ naming of a point in the convex hull of $C_N$, as that naming has exactly one canonical form.
\end{rmk}

\begin{lem} [Equivalence Lemma]
\label{equivalencelemma}
Let $N$ be an integer. Two proper-for-$N$ namings of the same point in the convex hull of $C_N$ are equivalent.
\end{lem}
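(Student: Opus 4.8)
The plan is to route both namings through their canonical forms and then invoke the uniqueness of the non-reducible naming. First I would take two proper-for-$N$ namings $P_1$ and $P_2$ of the same point in the convex hull of $C_N$. By Lemma \ref{uniquecanonicallemma}, the Unique Canonical Form Lemma, each of $P_1$ and $P_2$ has exactly one canonical form; call them $Q_1$ and $Q_2$ respectively. By the definition of canonical form, $Q_1$ is a proper-for-$N$, non-reducible naming equivalent to $P_1$, and likewise $Q_2$ is a proper-for-$N$, non-reducible naming equivalent to $P_2$.

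Next I would argue that $Q_1$ and $Q_2$ are namings of the same point. Since $Q_1$ is equivalent to $P_1$, Lemma \ref{equivlemma}, the Lemma on Equivalent Namings, gives that $Q_1$ is a naming of the same point as $P_1$; similarly $Q_2$ is a naming of the same point as $P_2$. But $P_1$ and $P_2$ name the same point by hypothesis, so $Q_1$ and $Q_2$ are both non-reducible, proper-for-$N$ namings of that one common point.

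The key step is then to apply Lemma \ref{atmostonenaminglemma}, the At Most One Non-Reducible Naming Lemma: a point in the convex hull of $C_N$ has at most one non-reducible naming among all $M'$ of the correct parity with $\frac{N+1}{2} - M'$ a nonnegative integer, so the two coincide, $Q_1 = Q_2$. Finally I would chain the equivalences: $P_1$ is equivalent to $Q_1$, and $Q_1 = Q_2$ is equivalent to $P_2$, so by transitivity of the equivalence relation (already established earlier in this section), $P_1$ is equivalent to $P_2$, which is the claim.

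I do not expect a genuine obstacle; the statement is essentially a bookkeeping consequence of the earlier lemmas. The only points requiring care are checking that each invocation's hypotheses hold: specifically that $Q_1$ and $Q_2$ are both \emph{proper-for-$N$}, so that Lemma \ref{atmostonenaminglemma} applies with $M'$ of the correct parity and $M' \leq \frac{N+1}{2}$, and that equivalence is being used as a bona fide equivalence relation so that the final transitivity step is legitimate. Both of these are guaranteed, the former by the definition of canonical form and the latter by the earlier lemma establishing that equivalence is reflexive, symmetric, and transitive.
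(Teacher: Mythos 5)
Your proposal is correct and follows essentially the same route as the paper's own proof: pass to the unique canonical forms, note they are non-reducible proper-for-$N$ namings of the common point, identify them via the At Most One Non-Reducible Naming Lemma, and conclude by transitivity of equivalence. Your added check that proper-for-$N$ guarantees $\frac{N+1}{2}-M'$ is a nonnegative integer, so that Lemma \ref{atmostonenaminglemma} applies, is a detail the paper leaves implicit but is handled correctly.
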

\begin{proof}
The two namings are equivalent to their canonical forms (which are unique by Lemma \ref{uniquecanonicallemma}, the Unique Canonical Form Lemma), which are thus still namings of the same point, as equivalent namings are namings of the same point by Lemma \ref{equivlemma}, the Lemma on Equivalent Namings. The two canonical forms are both proper-for-$N$ namings of the same point in the convex hull of $C_N$, and they are both non-reducible, so they are the same naming by Lemma \ref{atmostonenaminglemma}, the At Most One Non-Reducible Naming Lemma. Thus, both namings have the same canonical form, and thus, both namings are equivalent to the same naming (the canonical form), and are thus equivalent to each other.
\end{proof}

\begin{lem}
For every point in the convex hull of the curve that is the image of the parametric function $C_N: \left[t_{min}, t_{max}\right] \rightarrow \Re^N$, such that
$C_N\left(t\right) = 
\left(
\begin{matrix}
t \\
t^2 \\
t^3 \\
\cdots \\
t^N
\end{matrix}
\right)
$, and for any two representations as a convex combination of at most $\frac{N+1}{2}$ points on this curve if $N$ is odd, or as a convex combination of $\frac{N+2}{2}$ points on this curve if $N$ is even, where, furthermore, if $N$ is even, one of these at most $\frac{N+2}{2}$ points on the curve is required to be the point $C_N\left(t_{min}\right)$, it is possible to obtain one representation from the other by a sequence of three kinds of steps:

1) exchanging two terms with each other,

2) adding or removing a term with coefficient zero, or

3) combining two terms using the same point on the curve into a single term or splitting one term into two.
\end{lem}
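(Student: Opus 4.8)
The plan is to reduce this statement to Lemma \ref{equivalencelemma}, the Equivalence Lemma, by first putting each representation into the ordered form that the definition of a naming requires. A representation as a convex combination of the stated form differs from a naming only in that its terms need not be listed in increasing order of $t$, and step 1), the exchange of two terms, is exactly what is needed to repair this. So the argument has three blocks: sort the first representation into a naming using step 1), pass between the two sorted namings using steps 2) and 3) supplied by the Equivalence Lemma, and un-sort into the second representation using step 1) again.

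First I would verify that, after sorting, each representation is a proper-for-$N$ naming of the point in question. Given one of the two representations, I would sort its terms into nondecreasing order of the parameter $t$ using a sequence of transpositions, each an instance of step 1); this is just bubble-sort realized by exchanges. Because the coefficients are nonnegative and sum to $1$, and each point lies on $C_N$ (so each $t$ lies in $\left[t_{min}, t_{max}\right]$), the sorted tuple satisfies every requirement of a naming. When $N$ is odd it uses at most $\frac{N+1}{2}$ points, so it is an $M'$-naming with $M' \leq \frac{N+1}{2}$ an integer, hence proper for $N$ and of the correct parity. When $N$ is even, the hypothesis that one point equals $C_N\left(t_{min}\right)$ guarantees that after sorting the first term sits at $t_{min}$; thus the sorted tuple is a half-integer naming with $M' = \frac{N+2}{2} - \frac{1}{2} = \frac{N+1}{2}$, again proper for $N$ and of the correct parity. (If fewer points are used, $M'$ only decreases, so the conclusion still holds.) This is the one place requiring care, and I expect it to be the main obstacle: the even case works precisely because the forced $t_{min}$ point makes the ordered representation a half-integer, and hence correct-parity, naming; without that hypothesis an ordered representation could be an integer naming of the wrong parity, and the equivalence machinery would not apply.

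Next, the two sorted tuples are two proper-for-$N$ namings of the same point, so by Lemma \ref{equivalencelemma}, the Equivalence Lemma, they are equivalent. By Definition \ref{equivalencedef}, the Definition of Equivalence, there is then a chain of correct-parity-for-$N$ namings joining them in which any two consecutive entries are directly equivalent, meaning one is a zero-addition or an equal-split of the other. A zero-addition or its inverse is precisely step 2), and an equal-split or its inverse is precisely step 3); moreover every naming in the chain is already ordered, so no reordering is forced within the chain. Hence the equivalence chain realizes the passage from the first sorted naming to the second using only steps 2) and 3).

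Finally I would concatenate the three blocks: transform the first representation to its sorted naming by step-1) exchanges, move from the first sorted naming to the second by the step-2)/step-3) chain furnished by the Equivalence Lemma, and then undo the sorting of the second naming by step-1) exchanges to recover the second representation. This yields a single sequence of the three permitted operations carrying the first representation to the second, as claimed. Beyond the parity bookkeeping of the second paragraph, everything reduces to results already established, so no further calculation is needed.
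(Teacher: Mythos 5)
Your proposal is correct and follows essentially the same route as the paper's own proof: sort each representation into a naming via step-1) exchanges, check that the results are proper-for-$N$ namings of the correct parity (with the even case handled by the forced $C_N\left(t_{min}\right)$ point), invoke Lemma \ref{equivalencelemma} to supply the chain of zero-additions and equal-splits, and concatenate. Your parity bookkeeping matches the paper's argument exactly, so nothing further is needed.
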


\begin{proof}
The terms of the two representations of a point can be rearranged, so that their $t$s are in ascending order, which only uses step 1). The result is two namings, $P_1$ and $P_2$, of that point. Let $q$ be $1$ or $2$. Let $P_q$ be an $M_q$-naming, with $M_1$ a positive integer. Let $M'_q$ be $M_q$ if $N$ is odd, or let $M'_q$ be $M_q - \frac{1}{2}$ if $N$ is even. Then, $P_q$ is an $M'_q$-naming (either because $N$ is odd, and therefore, $M'_q = M_q$, or because $N$ is even, and therefore, $M'_q = M_q - \frac{1}{2}$ and the first term of $P_2$ uses the point at $t_{min}$, because this was one of the points used in the convex combination, which was moved to the front of the list when the convex combination was turned into a naming). $M'_q$ is of the correct parity for $N$, because $M' - \frac{N+1}{2}$ is an integer, regardless of whether $N$ is odd or even. $P_q$ is thus a correct-parity-for-$N$ naming. $P_q$ is also a proper-for-$N$ naming, because, if $N$ is odd, then $M'_q = M_q \leq \frac{N+1}{2}$, while if $N$ is even, $M'_q = M_q - \frac{1}{2} \leq \frac{N+2}{2} - \frac{1}{2} = \frac{N+1}{2}$.

By Lemma \ref{equivalencelemma}, the Equivalence Lemma, $P_1$ and $P_2$ are equvalent, and therefore, one of them can be reached from the other by a sequence of zero-additions, equal-splits, and the reverses of these steps, all of which are steps of kind 2) or 3). 

Thus, from the first representation, a sequence of steps of kind 1) reach $P_1$, from which a sequence of steps of kinds 2) and 3) reach $P_2$, from which a sequence of steps of kind 1) reach the second representation (the reverse of the sequence needed to reach $P_2$ from the second representation). Joining these sequences provides a way to obtain the second representation from the first by a sequence of steps of kinds 1), 2), and 3).
\end{proof}

\section{Homeomorphism}
\label{homeomorphismsection}

\begin{defn}
For any positive integer $N$, $Nam_N$ is the set of all $\left(\frac{N+1}{2}\right)$-namings of points in the convex hull of $C_N$.
\end{defn}
\begin{lem} [Alternative Definition of $Nam_N$]
\label{altdefinitionnamnlemma}
Let $N$ be a positive integer, and let $M = \frac{N+1}{2}$ if $N$ is odd, or let $M = \frac{N+2}{2}$ if $N$ is even. $Nam_N$ is exactly the set of all tuples $\left(c_1, \cdots, c_M, t_1, \cdots, t_M\right)$ that satisfy all of the following properties:
\begin{align*}
& \forall_{ j \in \left \lbrace 1, 2, \cdots, M \right \rbrace} t_{min} \leq t_j \leq t_{max} \\
& \forall_{ j \in \left \lbrace 1, 2, \cdots, M-1 \right \rbrace} t_j \leq t_{j+1} \\
& \forall _{j \in \left \lbrace 1, 2, \cdots, M \right \rbrace} c_j \geq 0 \\
& \sum \limits _{j = 1} ^{M} {c_j} = 1 \\
\end{align*}
and the additional property that $t_1 = t_{min}$ if $N$ is even.
\end{lem}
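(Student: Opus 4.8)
The plan is to prove the claimed set equality by double inclusion, and the content is almost entirely a matter of unwinding the definitions of \emph{naming}, of \emph{$\left(\frac{N+1}{2}\right)$-naming}, and of the \emph{convex hull of $C_N$}. First I would fix the bookkeeping: for $N$ odd, $\frac{N+1}{2} = M$ is an integer, so a $\left(\frac{N+1}{2}\right)$-naming is just an $M$-naming; for $N$ even, $\frac{N+1}{2} = M - \frac{1}{2}$ is a half-integer, so by definition a $\left(\frac{N+1}{2}\right)$-naming is an $M$-naming with the extra requirement $t_1 = t_{min}$. This matches the definition of $M$ given in the statement, and these two cases govern everything that follows.

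For the inclusion $Nam_N \subseteq \left\lbrace \text{tuples satisfying the listed properties} \right\rbrace$, I would take an arbitrary $\left(\frac{N+1}{2}\right)$-naming and read off each property. By definition such a naming represents its point as a convex combination $\sum_{j=1}^{M} c_j C_N\left(t_j\right)$, so its coefficients are nonnegative and sum to $1$, giving $c_j \geq 0$ and $\sum_{j=1}^{M} c_j = 1$. Each $C_N\left(t_j\right)$ is a point on $C_N$, so $t_{min} \leq t_j \leq t_{max}$, and the definition of a naming requires the terms to be listed in increasing order of $t_j$, giving $t_j \leq t_{j+1}$. Finally, when $N$ is even the naming is an $\left(M - \frac{1}{2}\right)$-naming, which forces $t_1 = t_{min}$. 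Thus every element of $Nam_N$ satisfies all of the listed conditions.

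For the reverse inclusion I would take an arbitrary tuple $\left(c_1, \cdots, c_M, t_1, \cdots, t_M\right)$ satisfying all of the listed properties and show it is a $\left(\frac{N+1}{2}\right)$-naming. The key observation is that the only requirement for being a naming that is not one of these purely syntactic conditions, namely that the tuple name a point that actually lies \emph{in the convex hull of $C_N$}, is automatic: evaluating the tuple produces $v = \sum_{j=1}^{M} c_j C_N\left(t_j\right)$, which is a finite convex combination of points on $C_N$, and the convex hull of $C_N$ is \emph{defined} to be the set of all such finite convex combinations, so $v$ lies in it. Hence the tuple is a genuine naming of $v$, with nonnegative coefficients summing to $1$ and with its $t_j$ in increasing order. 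It uses $M$ points, so it is an $M$-naming; when $N$ is odd this is already a $\left(\frac{N+1}{2}\right)$-naming since $M = \frac{N+1}{2}$, and when $N$ is even the hypothesis $t_1 = t_{min}$ makes it an $\left(M - \frac{1}{2}\right)$-naming, i.e. a $\left(\frac{N+1}{2}\right)$-naming. Either way it belongs to $Nam_N$.

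I expect no serious obstacle, as the statement is essentially definitional; the real point is only to notice that the single non-combinatorial condition defining a naming is discharged for free by the definition of the convex hull. The two items requiring care are the even/odd bookkeeping above and the recognition that the listed properties impose no distinctness of the $t_j$ and no strict positivity beyond $c_j \geq 0$. Consequently reducible tuples, those with some $c_j = 0$ or with two coinciding $t_j$, are still admitted as $\left(\frac{N+1}{2}\right)$-namings; this is exactly consistent with the earlier definitions, under which such tuples are \emph{reducible} namings rather than non-namings, so the characterization captures them correctly.
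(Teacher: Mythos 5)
Your proposal is correct and follows essentially the same route as the paper: both arguments simply unwind the definitions, observing that the listed conditions are exactly those defining an $M$-naming (with the convex-hull membership automatic from the definition of the convex hull as the set of all finite convex combinations) and then handling the even/odd cases via the $t_1 = t_{min}$ condition. Your double-inclusion organization is a slightly more explicit presentation of the paper's ``necessary and sufficient'' phrasing, but the content is identical.
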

\begin{proof}
When taken together, all of
\begin{align*}
& \forall_{ j \in \left \lbrace 1, 2, \cdots, M \right \rbrace} t_{min} \leq t_j \leq t_{max} \\
& \forall _{j \in \left \lbrace 1, 2, \cdots, M \right \rbrace} c_j \geq 0 \\
& \sum \limits _{j = 1} ^{M} {c_j} = 1 \\
\end{align*} 
are necessary and sufficient conditions required for 
$
\sum \limits _{j = 1} ^{M} {c_j
\left(
\begin{matrix}
t_j \\
\cdots \\
t_j^N \\
\end{matrix}
\right)
}
$ 
to be a convex combination of points on $C_N$, while $\forall_{ j \in \left \lbrace 1, 2, \cdots, M-1 \right \rbrace} t_j \leq t_{j+1}$ guarantees that the terms are in ascending order of $t$, and this is necessary and sufficient for 
$
\sum \limits _{j = 1} ^{M} {c_j
\left(
\begin{matrix}
t_j \\
\cdots \\
t_j^N \\
\end{matrix}
\right)
}
$
(or its other representation, $\left(c_1, \cdots, c_M, t_1, \cdots, t_M\right)$ to be an $M$-naming. When $N$ is odd, these are exactly the $\left(\frac{N+1}{2}\right)$-namings, because $M = \frac{N+1}{2}$ in this case. When $N$ is even, these are exactly the $\left(\frac{N+2}{2}\right)$-namings, because $M = \frac{N+1}{2}$ in this case, and among them, the $\left(\frac{N+2}{2}\right)$-namings are exactly the ones that satisfy the additional property that $t_1 = t_{min}$. Thus, whether $N$ is odd or even, the set of tuples meeting this list of properties is exactly $Nam_N$.
\end{proof}

\begin{defn}[Definition of Equivalence of $\left(\frac{N+1}{2}\right)$-namings]
\label{equivN+1over2def}
For any positive integer $N$, two $\left(\frac{N+1}{2}\right)$-namings of points in the convex hull of $C_N$ are \emph{equivalent}, if and only if they are equivalent as correct-parity-for-$N$ namings of points in the convex hull of $C_N$; that is, if and only if they are equivalent according to Definition \ref{equivalencedef}, the Definition of Equivalence. If $P_1$ and $P_2$ are the two namings, then this equivalence is denoted by $P_1 \cong P_2$.
\end{defn}
\begin{rmk}
The sequence of correct-parity-for-$N$ namings that Definition \ref{equivalencedef} requires to exist is allowed to have, as some of its entries, namings that are not $\left(\frac{N+1}{2}\right)$-namings.
\end{rmk}
\begin{rmk}
Two equivalent namings are namings of the same point, as shown in Lemma \ref{equivlemma}, the Lemma on Equivalent Namings.
\end{rmk}

\begin{defn}
$Nam_N/{\cong}$ is the set of all equivalence classes of $\left(\frac{N+1}{2}\right)$-namings of points in the convex hull of $C_N$ (that is, of elements of $Nam_N$), where any two namings equivalent under Definition \ref{equivN+1over2def}, the Definition of Equivalence of $\left(\frac{N+1}{2}\right)$-namings, are in the same equivalence class, while any two namings not equivalent under Definition \ref{equivN+1over2def} are not in the same equivalence class.
\end{defn}

\begin{defn}
$conv\left(C_N\right)$ is the convex hull of $C_N$.
\end{defn}

\begin{defn}
Let $N$ be a positive integer. The usual topology for $\Re^N$ is the Euclidean topology for $\Re^N$. Likewise, the usual topology for $\Re^{2M}$, where $M$ is $\frac{N+1}{2}$ if $N$ is odd, or $\frac{N+2}{2}$ if $N$ is even, is the Euclidean topology for $\Re^{2M}$.
\end{defn}

\begin{defn}
Let $N$ be a positive integer. The \emph{usual topology} for $Nam_N$ is defined to be the topology obtained by treating $Nam_N$ as a subspace of the topological space $\Re^{2M}$, where $M$ is $\frac{N+1}{2}$ if $N$ is odd, or $\frac{N+2}{2}$ if $N$ is even. ($M$ is the number of entries in each tuple in $Nam_N$.) That is, a set $S_{Nam}$ is open in $Nam_N$ if, and only if, there exists a set $S_{\Re, 2M}$ that is open in $\Re^{2M}$, and that additionally satisfies $S_{\Re, 2M} \cap Nam_N = S_{Nam}$.
\end{defn}

\begin{defn}
Let $N$ be a positive integer. The \emph{usual topology} for $Nam_N/{\cong}$ is defined to be the topology obtained by treating $Nam_N/{\cong}$ as a quotient of the topological space $Nam_N$ (which has its usual topology). That is, a set $S_{QNam}$ is open in $Nam_N/{\cong}$ if, and only if, the union of all the members of $S_{QNam}$ (which can be treated as sets, because they are equivalence classes) is open in $Nam_N$.
\end{defn}

\begin{defn}
Let $N$ be a positive integer. The \emph{usual topology} for $conv\left(C_N\right)$ is defined to be the topology obtained by treating $conv\left(C_N\right)$ as a subspace of the topological space $\Re^N$. That is, a set $S_{conv}$ is open in $conv\left(C_N\right)$ if, and only if, there exists a set $S_{\Re, N}$ that is open in $\Re^N$, and that additionally satisfies $S_{\Re} \cap conv\left(C_N\right) = S_{conv}$.
\end{defn}

\begin{defn}
Let $N$ be a positive integer. Then, $F_{\Re}$ is defined to be the function from $\Re^{2M}$ to $Re^{N}$, where $M$ is $\frac{N+1}{2}$ if $N$ is odd, and $M$ is $\frac{N+2}{2}$ if $N$ is even, such that
\[
F_{\Re}\left(c_1, \cdots, c_M, t_1, \cdots, t_M\right) =
\sum \limits _{j = 1} ^{M} {c_j
\left(
\begin{matrix}
t_j \\
\cdots \\
t_j^N \\
\end{matrix}
\right)
}
\]
\end{defn}

\begin{defn}
Let $N$ be a positive integer. Then, $F_{Nam}$ is defined to be the function from $Nam_N$ to $conv\left(C_N\right)$, with $F_{Nam}\left(P\right) = F_{\Re}\left(P\right)$, where $P$ is treated as an element of $\Re^{2M}$, where $M$ is $\frac{N+1}{2}$ if $N$ is odd, and $M$ is $\frac{N+2}{2}$ if $N$ is even, on the right side.
\end{defn}
\begin{rmk}
By this definition, $F_{Nam}$ is the function that evaluates the naming, when taken as a convex combination, so applying it to a naming always results in a point in $conv(C_N)$.
\end{rmk}
\begin{defn}
Let $N$ be a positive integer. Then, $F_{QNam}$ is defined to be the function from $Nam_N/{\cong}$ to $conv\left(C_N\right)$, with $F_{QNam}\left(\tilde{P}\right) = F_{Nam}\left(P\right)$, where $P$ is a member of the equivalence class $\tilde{P}$.
\end{defn}
\begin{rmk}
If $P_1$ and $P_2$ are equivalent namings, then, by Lemma \ref{equivlemma}, the Lemma on Equivalent Namings, $P_1$ and $P_2$ are namings of the same point. For $q = 1$ and $q = 2$, if $P_q = \left(c_{q, 1}, \cdots, c_{q, M}, t_{q, 1}, \cdots, t_{q, M}\right)$ (where $M$ is $\frac{N+1}{2}$ if $N$ is odd, or $\frac{N+2}{2}$ if $N$ is even), then $P_q$ is a naming of and only of 
$
\sum \limits _{j = 1} ^{M} {c_{q, j}
\left(
\begin{matrix}
t_{q, j} \\
\cdots \\
t_{q, j}^N
\end{matrix}
\right)
}
$. 
Thus, $P_q$ is a naming of and only of $F_{Nam}\left(P_q\right)$, which makes $F_{Nam}\left(P_1\right)$ and $F_{Nam}\left(P_2\right)$ the same point. Thus, the definition of $F_{QNam}\left(\tilde{P}\right)$ as $F_{Nam}\left(P\right)$ is independent of the choice of representative of $\tilde{P}$.
\end{rmk}

\begin{lem} [Continuity Lemma]
\label{continuitylemma}
Let $N$ be a positive integer, and let $\Re^{2M}$, $Nam_N$, $Nam_N/{\cong}$, $\Re^N$, and $conv\left(C_N\right)$ have their usual topologies. Then, $F_{\Re}$, $F_{Nam}$, and $F_{QNam}$ are all continuous.
\end{lem}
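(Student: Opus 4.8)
The plan is to handle the three functions in the order $F_{\Re}$, $F_{Nam}$, $F_{QNam}$, since each later function is built from the earlier one (first a restriction, then a quotient), so that continuity cascades downward once the continuity of $F_{\Re}$ is established. The whole lemma is essentially a matter of recognizing the given topologies as the standard subspace and quotient constructions and then applying the corresponding standard continuity facts.

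First I would show that $F_{\Re}$ is continuous. Its $i$-th component, for $i \in \{1, \ldots, N\}$, is the scalar function $(c_1, \ldots, c_M, t_1, \ldots, t_M) \mapsto \sum_{j=1}^{M} c_j t_j^i$, which is a polynomial in the $2M$ coordinates and hence continuous as a map from $\Re^{2M}$ to $\Re$. Because $\Re^N$ carries the Euclidean topology, a map into it is continuous exactly when all $N$ of its coordinate functions are continuous; therefore $F_{\Re}$ is continuous.

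Next I would obtain the continuity of $F_{Nam}$ with essentially no extra work. By definition $Nam_N$ is a subspace of $\Re^{2M}$ and $conv(C_N)$ is a subspace of $\Re^N$, and $F_{Nam}$ is literally $F_{\Re}$ with its domain restricted to $Nam_N$ and its codomain restricted to $conv(C_N)$ (the codomain restriction is legitimate because every value of $F_{Nam}$ lies in $conv(C_N)$, as observed in the remark following its definition). Restricting a continuous map to a subspace of its domain yields a continuous map, and co-restricting a continuous map to a subspace of its codomain that contains the image again yields a continuous map; applying both facts to $F_{\Re}$ gives the continuity of $F_{Nam}$.

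Finally, for $F_{QNam}$ I would invoke the universal property of the quotient topology. Let $\pi \colon Nam_N \to Nam_N/{\cong}$ be the map sending each naming to its equivalence class; by construction $F_{QNam} \circ \pi = F_{Nam}$. The key set-theoretic observation is that for any $S_{QNam} \subseteq Nam_N/{\cong}$, the union of all members of $S_{QNam}$ (each member being an equivalence class, i.e.\ a subset of $Nam_N$) is exactly $\pi^{-1}(S_{QNam})$; this identity is precisely what shows that the definition of ``open'' given for $Nam_N/{\cong}$ coincides with the usual quotient topology. Consequently, for any open $V \subseteq conv(C_N)$, the union of the members of $F_{QNam}^{-1}(V)$ equals $\pi^{-1}\!\left(F_{QNam}^{-1}(V)\right) = (F_{QNam} \circ \pi)^{-1}(V) = F_{Nam}^{-1}(V)$, which is open in $Nam_N$ by the continuity of $F_{Nam}$; hence $F_{QNam}^{-1}(V)$ is open and $F_{QNam}$ is continuous. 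I do not expect a genuine obstacle here: the only point requiring care is verifying the identity $\bigcup S_{QNam} = \pi^{-1}(S_{QNam})$ so that the ad hoc topology definitions are identified with the standard subspace and quotient topologies, after which the standard facts about restrictions and quotient maps complete the argument.
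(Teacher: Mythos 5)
Your proposal is correct and follows essentially the same route as the paper: componentwise (polynomial) continuity for $F_{\Re}$, restriction to subspaces for $F_{Nam}$, and the identification of the union-of-equivalence-classes with $\pi^{-1}$ to pass to the quotient for $F_{QNam}$. The only difference is presentational — you cite the standard facts about restrictions and the universal property of quotient maps where the paper verifies the same set-theoretic identities by explicit computation.
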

\begin{proof}
Let $M$ be $\frac{N+1}{2}$ if $N$ is odd, or $\frac{N+2}{2}$ if $N$ is even.

$F_{\Re}$ is continuous, as each component of $F_{\Re}$ is a continuous function on its whole domain $\Re^{2M}$.

Let $O_{conv}$ be an open set in $conv\left(C_N\right)$. Then, $O_{conv} = O_{\Re} \cap conv\left(C_N\right)$ for some open set $O_{\Re}$ in $\Re^N$, because $conv\left(C_N\right)$ has the usual topology. Let $F^{-1} \left(S\right)$ (with whatever subscript) denote the inverse image of $S$ under $F$ (with the same subscript). Thus:
\begin{align*}
F_{Nam}^{-1}\left(O_{conv}\right) &= \left \lbrace P \in Nam_N | F_{Nam}\left(P\right) \in O_{conv} \right \rbrace \\
&= \left \lbrace P \in Nam_N | F_{Nam}\left(P\right)  \in \left(O_{\Re} \cap conv\left(C_N\right)\right) \right \rbrace \\
&= \left \lbrace P \in Nam_N | F_{Nam}\left(P\right) \in O_{\Re} \text{ and } F_{Nam}\left(P\right) \in conv\left(C_N\right) \right \rbrace \\
&= \left \lbrace P \in Nam_N | F_{Nam}\left(P\right) \in O_{\Re} \right \rbrace \text { (as } F_{Nam}\left(P\right) \in conv\left(C_N\right) \text{)} \\
&= \left \lbrace P \in \Re^{2M} | P \in Nam_N \text{ and } F_{Nam}\left(P\right) \in O_{\Re} \right \rbrace \\
&= \left \lbrace P \in \Re^{2M} | P \in Nam_N \right \rbrace \cap  \left \lbrace P \in \Re^{2M} | F_{Nam}\left(P\right) \in O_{\Re} \right \rbrace \\
&= Nam_N \cap \left \lbrace P \in \Re^{2M} | F_{Nam}\left(P\right) \in O_{\Re} \right \rbrace \\
&= Nam_N \cap \left \lbrace P \in \Re^{2M} | F_{\Re}\left(P\right) \in O_{\Re} \right \rbrace \text { (as } F_{Nam}\left(P\right) = F_{\Re}\left(P\right) \text{)} \\
&= Nam_N \cap F_{\Re}^{-1}\left(O_{\Re}\right)
\end{align*}
which is an open set in $Nam_N$, as it is the intersection of $Nam_N$ with the set $F_{\Re}^{-1}\left(O_{\Re}\right)$, which is open in $\Re^{2M}$, because it is the inverse image of a set $O_{\Re}$ that is open in $\Re^N$, under a continuous (as proven earlier) function $F_{\Re}$ from $\Re^{2M}$ to $\Re^N$. Thus, the inverse image of an open set in $conv\left(C_N\right)$ under $F_{Nam}$ is an open set in $Nam_N$, which makes $F_{Nam}$ continuous.

Let $O_{Qconv}$ be an open set in $conv\left(C_N\right)$. Then,
\[
F_{QNam}^{-1}\left(O_{Qconv}\right) = \left \lbrace \tilde{P} \in \left(Nam_N/{\cong}\right) | F_{QNam}\left(\tilde{P}\right) \in O_{Qconv} \right \rbrace
\]

This is an open set in $Nam_N/{\cong}$ if and only if the union of its elements is open in $Nam_N$. This union is:
\begin{align*}
\bigcup \limits _{\tilde{P} \in F_{QNam}^{-1}\left(O_{Qconv}\right)} {\left(\tilde{P}\right)} &= \left \lbrace P \in Nam_N | \exists _{\tilde{P} \in F_{QNam}^{-1}\left(O_{Qconv}\right)} P \in \tilde{P} \right \rbrace \\
&= \left \lbrace P \in Nam_N | \exists _{\tilde{P} \in \left(Nam_N/{\cong}\right)} \left(P \in \tilde{P} \text{ and } \tilde{P} \in F_{QNam}^{-1}\left(O_{Qconv}\right) \right) \right \rbrace \\
&= \left \lbrace P \in Nam_N | \exists _{\tilde{P} \in \left(Nam_N/{\cong}\right)} \left(P \in \tilde{P} \text{ and } F_{QNam}\left(\tilde{P}\right) \in O_{Qconv} \right) \right \rbrace \\
&= \left \lbrace P \in Nam_N | \exists _{\tilde{P} \in \left(Nam_N/{\cong}\right)} \left(P \in \tilde{P} \text{ and } F_{Nam}\left(P\right) \in O_{Qconv} \right) \right \rbrace \\
& \quad\quad \text{(because } F_{Nam}\left(P\right) = F_{QNam}\left(\tilde{P}\right) \text{ follows from } P \in \tilde{P} \text{)} \\
&= \left \lbrace P \in Nam_N | \left( \exists _{\tilde{P} \in \left(Nam_N/{\cong}\right)} P \in \tilde{P} \right) \text{ and } F_{Nam}\left(P\right) \in O_{Qconv} \right \rbrace \\
&= \left \lbrace P \in Nam_N | F_{Nam}\left(P\right) \in O_{Qconv} \right \rbrace \\
& \quad\quad \text{(because every } P \in Nam_N \text{ is in its equivalence class)} \\
&= F_{Nam}^{-1}\left(O_{Qconv}\right) \\
\end{align*}
which is an open set in $Nam_N/{\cong}$, as it is the inverse image of a set $O_{Qconv}$ that is open in $conv\left(C_N\right)$, under a continuous (as proven earlier) function $F_{Nam}$ from $Nam_N$ to $conv(C_N)$. Thus, the inverse image of an open set in $conv\left(C_N\right)$ under $F_{QNam}$ is an open set in $Nam_N/{\cong}$, which makes $F_{QNam}$ continuous.

Thus, $F_{\Re}$, $F_{Nam}$, and $F_{QNam}$ are all continuous if $\Re^{2M}$, $Nam_N$, $Nam_N/{\cong}$, $\Re^N$, and $conv\left(C_N\right)$ have their usual topologies.
\end{proof}

\begin{lem} [Compactness of $Nam_N$ Lemma]
\label{compactnessofnamn}
Let $N$ be a positive integer. Then, $Nam_N$ (with its usual topology) is compact.
\end{lem}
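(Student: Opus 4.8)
The plan is to reduce the claim to the Heine--Borel theorem by exhibiting $Nam_N$ as a closed and bounded subset of $\Re^{2M}$, where $M$ is $\frac{N+1}{2}$ if $N$ is odd and $\frac{N+2}{2}$ if $N$ is even. Since the usual topology on $Nam_N$ is by definition the subspace topology inherited from $\Re^{2M}$, it suffices to establish closedness and boundedness there, and this is where Lemma \ref{altdefinitionnamnlemma}, the Alternative Definition of $Nam_N$, does the essential work: it describes $Nam_N$ not as a union of the naming spaces of individual points, but as the solution set of a single explicit finite system of (in)equalities on the tuple $\left(c_1, \cdots, c_M, t_1, \cdots, t_M\right)$.

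First I would verify boundedness. Each $t_j$ is constrained to lie in $\left[t_{min}, t_{max}\right]$, so every $t$-coordinate is bounded. For the coefficients, $c_j \geq 0$ together with $\sum_{j=1}^{M} c_j = 1$ forces $0 \leq c_j \leq 1$ for each $j$, so every $c$-coordinate is bounded as well. Hence the whole tuple ranges over a bounded region of $\Re^{2M}$.

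Next I would verify closedness, handling each defining condition separately and using that a finite intersection of closed sets is closed. Each of the conditions $t_{min} \leq t_j \leq t_{max}$, $t_j \leq t_{j+1}$, and $c_j \geq 0$ is the preimage of a closed half-line (or closed interval) under a continuous coordinate or difference function, hence closed; the condition $\sum_{j=1}^{M} c_j = 1$ is the preimage of the closed singleton $\{1\}$ under the continuous sum, hence closed; and, when $N$ is even, $t_1 = t_{min}$ is likewise the preimage of a closed singleton, hence closed. The set $Nam_N$ is the intersection of these finitely many closed sets, so it is closed in $\Re^{2M}$.

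Combining the two parts, $Nam_N$ is closed and bounded in $\Re^{2M}$, so by the Heine--Borel theorem it is compact. I expect no serious obstacle here: the entire content lies in invoking Lemma \ref{altdefinitionnamnlemma} so that $Nam_N$ becomes a single explicitly cut-out set rather than an uncountable union of the per-point naming spaces treated in Lemma \ref{compactnesslem}, the Compactness Lemma; once that reformulation is in hand, closedness and boundedness are routine verifications. The one point worth stating carefully is that the ``usual topology'' on $Nam_N$ means the subspace topology from $\Re^{2M}$, which is precisely what makes Heine--Borel applicable.
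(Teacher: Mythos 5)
Your proposal is correct and follows essentially the same route as the paper: both invoke Lemma \ref{altdefinitionnamnlemma} to present $Nam_N$ as a finite intersection of closed conditions in $\Re^{2M}$, check boundedness of the $t_j$ and $c_j$ coordinates, and conclude compactness via Heine--Borel. Your added detail about each condition being a preimage of a closed set under a continuous map is just a more explicit justification of the paper's assertion that each defining set is closed.
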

\begin{proof}
Let $M = \frac{N+1}{2}$ if $N$ is odd, or let $M = \frac{N+2}{2}$ if $N$ is even. By Lemma \ref{altdefinitionnamnlemma}, the Alternative Definition of $Nam_N$, $Nam_N$ is exactly the set of all tuples $\left(c_1, \cdots, c_M, t_1, \cdots, t_M\right)$ that satisfy all of the following properties:
\begin{align*}
& \forall_{ j \in \left \lbrace 1, 2, \cdots, M \right \rbrace} t_{min} \leq t_j \leq t_{max} \\
& \forall_{ j \in \left \lbrace 1, 2, \cdots, M-1 \right \rbrace} t_j \leq t_{j+1} \\
& \forall _{j \in \left \lbrace 1, 2, \cdots, M \right \rbrace} c_j \geq 0 \\
& \sum \limits _{j = 1} ^{M} {c_j} = 1 \\
\end{align*}
and the additional property that $t_1 = t_{min}$ if $N$ is even. The set satisfying any one of these properties (even the additional property, when $N$ is even) is closed, so $Nam_N$ is the intersection of closed sets, and is itself closed.

$Nam_N$ is bounded, because each $t_j$ is in the closed interval $\left[t_{min}, t_{max}\right]$, and because each $c_j$ is in the closed interval $\left[0, 1\right]$. (As all the $c_j$ are nonnegative, if any $c_j$ were greater than $1$, then $\sum \limits _{j = 1} ^{M} {c_j}$ would be greater than $1$.)

Since $Nam_N$ is closed and bounded, it is compact.
\end{proof}

\begin{lem} [Compactness of $Nam_N/{\cong}$ Lemma]
\label{compactnessofnamnovercong}
Let $N$ be a positive integer. Then, $Nam_N/{\cong}$ (with its usual topology) is compact.
\end{lem}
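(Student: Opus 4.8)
The plan is to deduce the compactness of $Nam_N/{\cong}$ directly from the compactness of $Nam_N$, established in Lemma \ref{compactnessofnamn}, the Compactness of $Nam_N$ Lemma. The key topological fact I will invoke is that the continuous image of a compact space is compact. Since $Nam_N/{\cong}$ is, by definition, the quotient of $Nam_N$ under the equivalence relation $\cong$, there is a canonical projection map $\pi: Nam_N \to Nam_N/{\cong}$ sending each naming $P$ to its equivalence class $\tilde{P}$. This projection is surjective, so $Nam_N/{\cong}$ is the image of $Nam_N$ under $\pi$.

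First I would verify that $\pi$ is continuous. This is essentially immediate from the definition of the usual (quotient) topology on $Nam_N/{\cong}$: a set $S_{QNam}$ is open in $Nam_N/{\cong}$ exactly when the union of its member equivalence classes is open in $Nam_N$. Given an open set $S_{QNam}$ in $Nam_N/{\cong}$, its inverse image $\pi^{-1}\left(S_{QNam}\right)$ is precisely the union of the equivalence classes belonging to $S_{QNam}$, which is open in $Nam_N$ by the very definition of the quotient topology. Hence $\pi$ is continuous. I would then record that $\pi$ is surjective, since every equivalence class $\tilde{P}$ contains at least one element $P \in Nam_N$, and that element satisfies $\pi\left(P\right) = \tilde{P}$.

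With continuity and surjectivity of $\pi$ in hand, the conclusion follows quickly. By Lemma \ref{compactnessofnamn}, $Nam_N$ is compact. The continuous image of a compact space is compact, so $\pi\left(Nam_N\right)$ is compact. Because $\pi$ is surjective, $\pi\left(Nam_N\right) = Nam_N/{\cong}$, and therefore $Nam_N/{\cong}$ is compact.

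I do not expect any serious obstacle here; the argument is a standard application of the quotient-topology construction, and all the work has already been done in the preceding lemma. The only point requiring a moment of care is confirming that the inverse image of an open set under $\pi$ coincides with the union of the constituent equivalence classes, which is exactly what the definition of the usual topology on $Nam_N/{\cong}$ asserts; once that identification is made, continuity of $\pi$ is automatic and the rest is routine.
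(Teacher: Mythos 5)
Your proof is correct and follows the same essential route as the paper: both deduce the compactness of $Nam_N/{\cong}$ from that of $Nam_N$ (Lemma \ref{compactnessofnamn}) via the quotient structure. The only difference is packaging — you invoke the standard theorem that the continuous image of a compact space is compact, applied to the canonical projection $\pi$, whereas the paper unwinds that same argument into an explicit open-cover verification without ever naming $\pi$; your identification of $\pi^{-1}\left(S_{QNam}\right)$ with the union of the equivalence classes belonging to $S_{QNam}$ is exactly the point that makes continuity of $\pi$ immediate from the paper's definition of the quotient topology.
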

\begin{proof}
Let $I$ be an index set, and let $\left \lbrace O_{QNam, i} | i \in I \right \rbrace$ be an open cover of $Nam_N/{\cong}$. Thus, $\bigcup \limits _{i \in I} {O_{QNam, i}} = Nam_N/{\cong}$, and every $O_{QNam, i}$ is open in $Nam_N/{\cong}$.

For every $O_{QNam, i}$, since $O_{QNam, i}$ is open in $Nam_N/{\cong}$, it follows that $\bigcup \limits _{\tilde{P} \in O_{QNam, i}} {\left(\tilde{P}\right)}$ is open in $Nam_N$. Let this set be $O_{Nam, i}$. Then:

\begin{align*}
\bigcup \limits _{i \in I} {O_{Nam, i}} &= \bigcup \limits _{i \in I} {\left(\bigcup \limits _{\tilde{P} \in O_{QNam, i}} {\left(\tilde{P}\right)}\right)} \\
&= \bigcup \limits _{i \in I} {\left(\bigcup \limits _{\tilde{P} \in O_{QNam, i}} {\left(\left \lbrace P| P \in \tilde{P}\right \rbrace\right)}\right)} \\
&= \bigcup \limits _{i \in I} {\left(\left \lbrace P | \exists _{\tilde{P} \in O_{QNam, i}} P \in \tilde{P} \right \rbrace\right)} \\
&= \bigcup \limits _{i \in I} {\left(\left \lbrace P | \exists _{\tilde{P} \in Nam_N/{\cong}} \left(P \in \tilde{P} \text{ and } \tilde{P} \in O_{QNam, i}\right) \right \rbrace\right)} \\
&= \left \lbrace P |\exists_{i \in I} \left( \exists _{\tilde{P} \in Nam_N/{\cong}} \left(P \in \tilde{P} \text{ and } \tilde{P} \in O_{QNam, i}\right) \right) \right \rbrace \\
&= \left \lbrace P |\exists _{\tilde{P} \in Nam_N/{\cong}} {\left( \exists_{i \in I} \left(P \in \tilde{P} \text{ and } \tilde{P} \in O_{QNam, i}\right) \right)} \right \rbrace \\
&= \left \lbrace P |\exists _{\tilde{P} \in Nam_N/{\cong}} {\left( P \in \tilde{P} \text{ and } \exists_{i \in I} \left( \tilde{P} \in O_{QNam, i}\right) \right)} \right \rbrace \\
&= \left \lbrace P |\exists _{\tilde{P} \in Nam_N/{\cong}} {P \in \tilde{P}} \right \rbrace \text{ (since } \left \lbrace O_{QNam, i} | i \in I \right \rbrace \text{ is an open cover of } Nam_N/{\cong} \text{)} \\
&= Nam_N \text{ (since every } P \in Nam_N \text{ is in its equivalence class)} \\
\end{align*}

That makes $\left \lbrace O_{Nam, i} | i \in I \right \rbrace$ an open cover of $Nam_N$. By Lemma \ref{compactnessofnamn}, the Compactness of $Nam_N$ Lemma, $\left \lbrace O_{Nam, i} | i \in I \right \rbrace$ has a finite subcover. Let $\left \lbrace O_{Nam, i} | i \in I' \right \rbrace$ be this finite subcover, where $I'$ is a subset of $I$.

$\left \lbrace O_{QNam, i} | i \in I' \right \rbrace$ is a collection of open sets in $Nam_N/{\cong}$, as it is a subset of an open cover of $Nam_N/{\cong}$. Its union is $\bigcup \limits _{i \in I'} {O_{QNam, i}}$. Let $\tilde{P}$ be in $Nam_N/{\cong}$, and let $P$ be in $\tilde{P}$. $P$ is in some $O_{Nam, i}$ with $i \in I'$, because $\left \lbrace O_{Nam, I} | i \in I' \right \rbrace$ is an open cover of $Nam_N$. Therefore, for this $i$, $P \in \bigcup \limits _{\tilde{P} \in O_{QNam, i}} {\left(\tilde{P}\right)}$. Since $P$ is in one and only one equivalence class, and since the sets comprising $\bigcup \limits _{\tilde{P} \in O_{QNam, i}} {\left(\tilde{P}\right)}$ are equivalence classes, it follows that the $\tilde{P}$ that $P$ is in must satisfy $\tilde{P} \in O_{QNam, i}$. As $i \in I'$, this means that all equivalence classes are elements of $\bigcup \limits _{i \in I'} {O_{QNam, i}}$. That makes $\left \lbrace O_{QNam, i} | i \in I' \right \rbrace$ an open cover of $Nam_N/{\cong}$. Therefore, the open cover $\left \lbrace O_{QNam, i} | i \in I \right \rbrace$ of $Nam_N/{\cong}$ has a finite subcover, namely $\bigcup \limits _{i \in I'} {O_{QNam, i}}$.

Thus, every open cover of $Nam_N/{\cong}$ has a finite subcover. Therefore, $Nam_N/{\cong}$ is compact.

\end{proof}

\begin{lem} [Hausdorff Property of $conv\left(C_N\right)$ Lemma]
\label{hausdorffconvcnlemma}
Let $N$ be a positive integer. Then, $conv\left(C_N\right)$ (with its usual topology) is Hausdorff. That is, for any two different points $v'$ and $v''$, both in $conv\left(C_N\right)$, there exist an open set $O_v'$ containing $v'$ and open set $O_{v''}$ containing $v''$, such that $O_v'$ and $O_{v''}$ are disjoint.
\end{lem}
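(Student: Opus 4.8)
The plan is to exploit the fact that $conv\left(C_N\right)$ carries the subspace topology inherited from $\Re^N$, together with the standard fact that $\Re^N$ with the Euclidean topology is Hausdorff (being a metric space). The Hausdorff property is inherited by every subspace, so the result should follow by intersecting separating open sets of $\Re^N$ with $conv\left(C_N\right)$. The argument is essentially a routine verification once the explicit subspace-topology definition the paper has given is unwound.

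First I would fix two distinct points $v'$ and $v''$ in $conv\left(C_N\right)$. Since these are distinct points of $\Re^N$, their Euclidean distance $d$ is strictly positive. I would then form the open balls $B'$ and $B''$ of radius $\frac{d}{2}$ centered at $v'$ and $v''$ respectively. Each is open in $\Re^N$, each contains its center, and they are disjoint: by the triangle inequality, a point lying in both would put $v'$ and $v''$ within distance $\frac{d}{2} + \frac{d}{2} = d$ of each other only through a strict inequality, forcing the contradiction $d < d$.

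Next I would define $O_{v'} = B' \cap conv\left(C_N\right)$ and $O_{v''} = B'' \cap conv\left(C_N\right)$. By the definition of the usual topology on $conv\left(C_N\right)$ as a subspace of $\Re^N$ (a set is open in $conv\left(C_N\right)$ exactly when it is the intersection of an open set of $\Re^N$ with $conv\left(C_N\right)$), both $O_{v'}$ and $O_{v''}$ are open in $conv\left(C_N\right)$. The point $v'$ lies in $O_{v'}$ and $v''$ lies in $O_{v''}$, because each point lies in its own ball and in $conv\left(C_N\right)$. Finally, $O_{v'}$ and $O_{v''}$ are disjoint, because $O_{v'} \cap O_{v''} \subseteq B' \cap B'' = \emptyset$. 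This exhibits the required separating open sets, so $conv\left(C_N\right)$ is Hausdorff.

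There is no substantive obstacle here; the only thing to attend to is phrasing the argument in terms of the paper's explicit subspace-topology definition rather than invoking the inheritance of the Hausdorff property as a black box. The separating balls can be produced completely concretely, so no appeal to a general topological theorem beyond the metric structure of $\Re^N$ is needed.
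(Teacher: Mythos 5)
Your proposal is correct and follows essentially the same route as the paper: separate the two points by open sets in $\Re^N$ and intersect with $conv\left(C_N\right)$ to get open sets in the subspace topology. The only difference is that you make the separating sets explicit as balls of radius $\frac{d}{2}$, where the paper simply cites that $\Re^N$ separates points.
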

\begin{proof}
Any two points in $conv(C_N)$ are points in $\Re^N$, which can be separated from each other by open sets in $\Re^N$. The parts of these open sets that are in $conv(C_N)$ are open in $conv(C_N)$, and they serve as the open sets separating the two points.
\end{proof}

\begin{lem}
Let $N$ be a positive integer. Let $\Re^{2M}$, $Nam_N$, $Nam_N/{\cong}$, $\Re^N$, and $conv\left(C_N\right)$ have their usual topologies. Then, $F_{QNam}$ is a homeomorphism from $Nam_N/{\cong}$ to $conv\left(C_N\right)$.
\end{lem}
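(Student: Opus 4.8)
The plan is to prove that $F_{QNam}$ is a homeomorphism by invoking the classical fact that a continuous bijection from a compact space to a Hausdorff space is automatically a homeomorphism. Three of the four ingredients are already in hand: $F_{QNam}$ is continuous by Lemma \ref{continuitylemma}, the Continuity Lemma; the domain $Nam_N/{\cong}$ is compact by Lemma \ref{compactnessofnamnovercong}, the Compactness of $Nam_N/{\cong}$ Lemma; and the codomain $conv\left(C_N\right)$ is Hausdorff by Lemma \ref{hausdorffconvcnlemma}, the Hausdorff Property of $conv\left(C_N\right)$ Lemma. So the only substantive work is to verify that $F_{QNam}$ is a bijection, and then to run the compactness-to-Hausdorff argument to obtain continuity of the inverse.

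For surjectivity I would note that, by Lemma \ref{naminglem}, the Naming Lemma, every point $v$ of $conv\left(C_N\right)$ has an $\left(\frac{N+1}{2}\right)$-naming $P$, that is, an element of $Nam_N$; its equivalence class $\tilde{P}$ then satisfies $F_{QNam}\left(\tilde{P}\right) = F_{Nam}\left(P\right) = v$, so every point of $conv\left(C_N\right)$ is in the image. For injectivity, suppose $F_{QNam}\left(\tilde{P}_1\right) = F_{QNam}\left(\tilde{P}_2\right)$, and choose representatives $P_1$ and $P_2$. These are $\left(\frac{N+1}{2}\right)$-namings of the same point. Each is a proper-for-$N$ naming, since $\frac{N+1}{2} - \frac{N+1}{2} = 0$ is an integer (correct parity) and $\frac{N+1}{2} \leq \frac{N+1}{2}$. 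Hence, by Lemma \ref{equivalencelemma}, the Equivalence Lemma, $P_1$ and $P_2$ are equivalent, so $P_1 \cong P_2$ in the sense of Definition \ref{equivN+1over2def}, which is exactly the statement that $\tilde{P}_1 = \tilde{P}_2$. Thus $F_{QNam}$ is injective, and therefore a bijection.

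With the bijection established, I would finish by showing that $F_{QNam}$ is a closed map, which yields continuity of $F_{QNam}^{-1}$. Let $K$ be any closed subset of $Nam_N/{\cong}$. Since $Nam_N/{\cong}$ is compact, $K$ is compact; since $F_{QNam}$ is continuous, $F_{QNam}\left(K\right)$ is compact; and since $conv\left(C_N\right)$ is Hausdorff, the compact set $F_{QNam}\left(K\right)$ is closed. Thus $F_{QNam}$ sends closed sets to closed sets, so the preimage under $F_{QNam}^{-1}$ of any closed set is closed, which is precisely continuity of $F_{QNam}^{-1}$. A continuous bijection with continuous inverse is a homeomorphism, completing the proof.

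I expect the main obstacle to be the injectivity step, as it is the only place where the entire equivalence apparatus of Section \ref{uniquenesssection} is actually deployed: one must recognize that $\left(\frac{N+1}{2}\right)$-namings qualify as proper-for-$N$ namings so that the Equivalence Lemma applies, and that equivalence of two representatives is by definition the equality of their classes in $Nam_N/{\cong}$. Everything else is either a direct citation of an earlier lemma or the standard compactness-Hausdorff packaging, so no new calculation should be required.
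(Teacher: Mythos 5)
Your proposal is correct and follows essentially the same route as the paper: surjectivity from the Naming Lemma, injectivity from the Equivalence Lemma applied to representatives (which are proper-for-$N$), continuity from the Continuity Lemma, and then the compact-to-Hausdorff principle via the Compactness of $Nam_N/{\cong}$ and Hausdorff lemmas. The only difference is that you spell out the closed-map argument behind that principle, which the paper simply invokes as a known fact.
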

\begin{proof}
Let $M$ be $\frac{N+1}{2}$ if $N$ is odd, or $\frac{N+2}{2}$ if $N$ is even.

By Lemma \ref{naminglem}, the Naming Lemma, every point 
$
\left(
\begin{matrix}
v_1 \\
\cdots \\
v_N \\
\end{matrix}
\right)
$ in the convex hull of $C_N$ has an $\left(\frac{N+1}{2}\right)$-naming. Let $P$ be this naming, which thus evaluates to 
$
\left(
\begin{matrix}
v_1 \\
\cdots \\
v_N \\
\end{matrix}
\right)
$ 
when treated as a convex combination. Thus, 
$
F_{Nam}\left(P\right) =
\left(
\begin{matrix}
v_1 \\
\cdots \\
v_N \\
\end{matrix}
\right)
$. 
$P$ is in some equivalence class $\tilde{P}$, for which, it thus holds that 
$
F_{QNam}\left(\tilde{P}\right) =
\left(
\begin{matrix}
v_1 \\
\cdots \\
v_N \\
\end{matrix}
\right)
$. That makes $F_{QNam}$ onto.

If $F_{QNam}\left(\tilde{P_1}\right) = F_{QNam}\left(\tilde{P_2}\right)$ for two equivalence classes of $\left(\frac{N+1}{2}\right)$-namings, then let $P_1 \in \tilde{P_1}$ and $P_2 \in \tilde{P_2}$. Thus, $F_{Nam}\left(P_1\right) = F_{Nam}\left(P_2\right)$, so $P_1$ and $P_2$ evaluate to the same point. Thus, $P_1$ and $P_2$ are equivalent to each other, by Lemma \ref{equivalencelemma}, the Equivalence Lemma. Thus, $\tilde{P_1}$ and $\tilde{P_2}$ are the same. That makes $F_{QNam}$ one-to-one.

By Lemma \ref{continuitylemma}, the Continuity Lemma, $F_{QNam}$ is continuous.

By Lemma \ref{compactnessofnamnovercong}, the Compactness of $Nam_N/{\cong}$ Lemma, $Nam_N/{\cong}$ is compact.

By Lemma \ref{hausdorffconvcnlemma}, the Hausdorff Property of $conv\left(C_N\right)$ Lemma, $conv\left(C_N\right)$ is Hausdorff.

Thus, $F_{QNam}$ is an onto, one-to-one, continuous function from a compact space ($Nam_N/{\cong}$) to a Hausdorff space ($conv\left(C_N\right)$). Thus, $F_{QNam}$ is a homeomorphism from $Nam_N/{\cong}$ to $conv\left(C_N\right)$.
\end{proof}

\begin{rmk}
When the namings in the equivalence classes are considered as convex combinations, $F_{QNam}$ is exactly the function that evaluates the convex combination.
\end{rmk}

\section{Conclusion}

This paper examines an aspect of the curve $\left(t, t^2, t^3, \cdots, t^N\right)$, namely its convex hull when $t$ is restricted to a closed interval. Not only do $\frac{N+1}{2}$ points on the curve suffice to name each point in this convex hull as a convex combination, but, up to equivalence, there is only one such way to name each point in the convex hull, and also, the evaluation of the convex combination as a sum is a homeomorphism.

\bibliographystyle{plain}

\end{document}